\theoremstyle{plain}
\newtheorem{lem}{Lemma}[section]
\newtheorem{prop}[lem]{Proposition}
\newtheorem{thm}[lem]{Theorem}
\newtheorem{conj}[lem]{Conjecture}
\newtheorem{hyp}[lem]{Hypothesis}
\theoremstyle{definition}
\newtheorem*{ntn}{Notation}
\newtheorem*{ack}{Acknowledgment}
\newcommand{\lra}{\longrightarrow}
\newcommand{\llra}{\longleftrightarrow}
\newcommand{\inj}{\hookrightarrow}
\newcommand{\surj}{\twoheadrightarrow}
\newcommand{\C}{\mathbb{C}}
\newcommand{\Q}{\mathbb{Q}}
\newcommand{\W}{\mathbb{W}}
\newcommand{\Z}{\mathbb{Z}}
\newcommand{\bfk}{\mathbf{k}}
\newcommand{\bfs}{\mathbf{s}}
\newcommand{\rmM}{\mathrm{M}}
\newcommand{\rmb}{\mathrm{b}}
\newcommand{\rmc}{\mathrm{c}}
\newcommand{\myspan}{\operatorname{span}}
\newcommand{\supp}{\operatorname{supp}}
\newcommand{\myim}{\operatorname{Im}}
\newcommand{\myre}{\operatorname{Re}}
\newcommand{\norm}{\operatorname{Norm}}
\newcommand{\cent}{\operatorname{Cent}}
\newcommand{\vol}{\operatorname{vol}}
\newcommand{\inv}{^{-1}}
\newcommand{\calA}{\mathcal{A}}
\newcommand{\calK}{\mathcal{K}}
\newcommand{\calM}{\mathcal{M}}
\newcommand{\calR}{\mathcal{R}}
\newcommand{\calS}{\mathcal{S}}
\newcommand{\calT}{\mathcal{T}}
\newcommand{\scrF}{\mathscr{F}}
\newcommand{\scrH}{\mathscr{H}}
\newcommand{\scrK}{\mathscr{K}}
\newcommand{\scrV}{\mathscr{V}}
\newcommand{\NN}{\mathfrak{N}}
\newcommand{\mmatrix}[4]{\begin{pmatrix} #1 & #2 \\ #3 & #4 \end{pmatrix}}
\newcommand{\WD}{\mathit{WD}}
\newcommand{\an}[1]{\langle #1 \rangle}
\newcommand{\An}[1]{\left\langle #1 \right\rangle}
\newcommand{\gf}{\gamma_F}
\newcommand{\GL}{\operatorname{GL}}
\newcommand{\SL}{\operatorname{SL}}
\newcommand{\Sp}{\operatorname{Sp}}
\newcommand{\Mp}{\operatorname{Mp}}
\newcommand{\ML}{\widetilde{\GL}}
\newcommand{\SO}{\operatorname{SO}}
\newcommand{\Or}{\operatorname{O}}
\newcommand{\calMp}{\mathcal{M}p}
\newcommand{\Hom}{\operatorname{Hom}}
\newcommand{\End}{\operatorname{End}}
\newcommand{\Alt}{\operatorname{Alt}}
\newcommand{\Sym}{\operatorname{Sym}}
\newcommand{\mpsp}[1]{\overline{#1}}
\newcommand{\wl}{\mathrm{W}}
\newcommand{\wlbc}[1]{\mathfrak{S}_{#1} \ltimes (\Z/2\Z)^{#1}}
\newcommand{\Pitemp}{\Pi_{\rm temp}}
\newcommand{\Phitemp}{\Phi_{\rm temp}}
\newcommand{\Irr}{\operatorname{Irr}}
\newcommand{\Ind}{\operatorname{Ind}}
\newcommand{\cv}{\check{v}}
\newcommand{\splw}{{\bf spl}_{\Sp(W)}}
\newcommand{\splv}{{\bf spl}_{\SO(V^+)}}
\newcommand{\wtil}{\widetilde}
\title{Local intertwining relation for metaplectic groups}
\author{Hiroshi Ishimoto}
\date{}
\begin{document}

\maketitle

\begin{abstract}
In an earlier paper of Wee Teck Gan and Gordan Savin, the local Langlands correspondence for metaplectic groups over a nonarchimedean local fields of characteristic zero was established.
In this paper, we formulate and prove a local intertwining relation for metaplectic groups assuming the local intertwining relation for non-quasi-split odd special orthogonal groups.
\end{abstract}

\tableofcontents

\section{Introduction}
In his long-awaited book \cite{art13}, Arthur obtained a classification of irreducible representations of quasi-split symplectic and special orthogonal groups over local fields of characteristic zero (the local Langlands correspondence, which we shall call LLC for short).
Recall the basic form of the correspondence over nonarchimedean local fields of characteristic zero.
Let $F$ be a $p$-adic field, i.e., a finite extension of $\Q_p$, for some prime number $p$.
Let $\Gamma_F$ and $W_F$ be the absolute Galois group and the absolute Weil group of $F$, respectively.
We shall write $\WD_F$ for the Weil-Deligne group $W_F\times\SL_2(\C)$.

Let $G$ be a connected reductive algebraic group defined over $F$.
LLC proposes a classification of irreducible tempered admissible representations of $G(F)$ in terms of tempered admissible $L$-parameters for $G$.
Let $\hat{G}$ be the connected complex Langlands dual group of $G$.
We write $\Pitemp(G)$ for the set of equivalence classes of irreducible tempered admissible representations of $G(F)$, and $\Phitemp(G)$ for the set of equivalence classes of tempered admissible $L$-parameters $\phi : \WD_F \to \hat{G}\rtimes W_F$.
The basic form of LLC is the following:
\begin{conj}
\begin{enumerate}[(1)]
\item
There exists a canonical map
\begin{equation*}
\mathit{LL} : \Pitemp(G) \lra \Phitemp(G)
\end{equation*}
with some important properties.
\item
For each $\phi \in \Phitemp(G)$, the fiber $\Pi_\phi=\Pi_\phi(G)=\mathit{LL}\inv(\phi)$ is a finite set.
It is called a packet.
\end{enumerate}
\end{conj}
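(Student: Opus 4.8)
The plan is to follow Arthur's endoscopic strategy, in which $\mathit{LL}$ is not constructed by purely local means but extracted from a global comparison of trace formulae; this is what makes the conjecture a theorem for the quasi-split symplectic and special orthogonal groups relevant here. One realizes $\hat{G}$, via its standard representation, as the dual of a twisted endoscopic datum for $\GL_N$, so that every $\phi\in\Phitemp(G)$ composes to a self-dual parameter $\phi_N\colon\WD_F\to\GL_N(\C)$ for which LLC is already known (Harris--Taylor, Henniart, Scholze). The map $\mathit{LL}$ and the packets $\Pi_\phi$ are then characterized by demanding a coherent system of twisted character identities relating the stable character of $\Pi_\phi$ on $G(F)$ to the $\theta$-twisted character of the $\GL_N(F)$-representation attached to $\phi_N$, together with the analogous endoscopic identities for all elliptic endoscopic data of $G$; canonicity of $\mathit{LL}$ is precisely the statement that these identities admit a consistent normalization.

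First I would set up the geometric and spectral expansions: the stabilized trace formula for $G$ and the $\theta$-twisted trace formula for $\GL_N$, and invoke the transfer of test functions --- Langlands--Shelstad--Kottwitz transfer together with the (twisted) fundamental lemma, now theorems of Ngô, Waldspurger, and Chaudouard--Laumon --- to match their geometric sides. Feeding in the known classification for $\GL_N$ and running an induction on $N$, one isolates on the spectral side the contribution of discrete (square-integrable modulo center) parameters; an inductive construction then produces the packets $\Pi_\phi$ for discrete $\phi$ out of the discrete automorphic spectrum of suitable inner forms, and the general tempered packets follow by normalized parabolic induction from the discrete ones, irreducibility being a consequence of the expected structure of the relevant $R$-groups. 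Assertion (2), finiteness of $\Pi_\phi$, is then immediate: a packet is parametrized by characters of the finite component group $\calS_\phi=\pi_0(\cent(\myim\phi,\hat{G}))$, so $|\Pi_\phi|\le|\widehat{\calS_\phi}|<\infty$. The genuinely local statement is recovered from the global one by a standard globalization argument: one embeds $F$ and the local parameter $\phi$ into a global situation with controlled ramification, applies the global classification, and reads off the local packet as the set of local components at the chosen place, checking independence of the auxiliary choices by varying the globalization.

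The hard part --- and the reason the conjecture is not a formality --- is exactly the point at which the present paper intervenes: proving that the representations produced in this way satisfy the required endoscopic character identities, equivalently the \emph{local intertwining relation}, which governs how the intertwining operators between induced representations act on the elements of a packet and thereby forces the normalization that makes $\mathit{LL}$ canonical. For metaplectic groups this additionally requires transporting the whole argument across the $\theta$-correspondence with odd special orthogonal groups, which is why the work here is carried out conditionally on the local intertwining relation for non-quasi-split $\SO_{2n+1}$.
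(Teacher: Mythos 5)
The statement you are trying to prove is stated in the paper as a \emph{conjecture}, not a theorem: it is the basic form of the local Langlands conjecture for an \emph{arbitrary} connected reductive group $G$ over a $p$-adic field, recalled in the introduction purely as background (with references to Borel, Arthur, and Kaletha for the expected properties). The paper gives no proof of it, and none is known in this generality, so a complete "proof" of the statement cannot be correct as written. The concrete gap in your argument is its very first step: realizing $\hat{G}$ through a standard representation as (the dual group of) a twisted endoscopic datum for $\GL_N$, so that every $\phi\in\Phitemp(G)$ pushes forward to a self-dual parameter for $\GL_N$ where LLC is known. This is available precisely for quasi-split symplectic, special orthogonal, and unitary groups (and, with substantial extra work, some inner forms), which is why Arthur, Mok, and Kaletha--M\'inguez--Shin--White could carry out the comparison of the stabilized trace formula of $G$ with the twisted trace formula of $\GL_N$. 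For a general connected reductive $G$ --- exceptional groups, spin groups, general inner forms, or indeed the metaplectic groups that are the actual subject of this paper (which are not even linear algebraic groups) --- no such comparison exists, so the strategy does not produce the map $\mathit{LL}$, and assertion (2) cannot be reduced to finiteness of $\pi_0(\cent(\myim\phi,\hat{G}))$ because there is no packet to parametrize in the first place.

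Even restricted to the classical groups where the strategy does apply, what you have written is a roadmap through Arthur's book rather than a proof: the stabilization of the twisted trace formula, the transfer and fundamental lemma inputs, the globalization of local parameters with controlled ramification, the analysis of $R$-groups, and above all the local intertwining relation are each major theorems whose proofs occupy hundreds of pages, and the present paper explicitly treats the last of these (for $\Mp_{2n}$, conditionally on the non-quasi-split $\SO_{2n+1}$ case) as an open problem to be settled, not as an ingredient one may quote. So the honest conclusion is: the conjecture is not proved in the paper, your proposal proves it only for the groups already handled in the literature, and the part you defer to "the point at which the present paper intervenes" is exactly the content that would need to be supplied.
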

There are further expected properties.
We refer the reader to \cite{bor}, \cite{artast}, or \cite{llcnqs} for details.\\

As mentioned above, Arthur \cite{art13} established LLC for quasi-split $\SO_{2n}$, $\SO_{2n+1}$, and $\Sp_{2n}$, which denote even special orthogonal, the odd special orthogonal, and the symplectic groups of rank $n$, respectively.
Moreover, M\oe glin-Renard \cite{mr} gives a classification of irreducible tempered representations of non-quasi-split odd special orthogonal groups over $p$-adic fields, hence LLC of Vogan type.
Recall LLC of Vogan type (\cite[Conjecture 4.15]{vog}) over $p$-adic fields.
Let $G^*$ be a quasi-split connected reductive algebraic group over a $p$-adic field $F$.
LLC of Vogan type treats pure inner twists of $G^*$ at the same time.
For each $\phi \in \Phitemp(G)$, we let $S_\phi=S_\phi(G)$ denote the centralizer $\cent(\myim\phi, \hat{G})$, and let $\pi_0(S_\phi)$ denote its component group.
Then LLC of Vogan type proposes the following:
\begin{conj}
\begin{enumerate}[(1)]
\item
There exists a canonical map
\begin{equation*}
\mathit{LLV} : \bigsqcup_{(\xi,z)} \Pitemp(G) \lra \Phitemp(G^*),
\end{equation*}
as $(\xi,z)$ runs over the isomorphism classes of pure inner twists of $G^*$, i.e., $\xi : G^* \to G$ is an inner twist and $z\in Z^1(\Gamma_F, G^*)$ is a 1-cocycle such that $\xi\inv\circ\sigma\circ\xi\circ\sigma\inv=\operatorname{Ad}(z(\sigma))$ for all $\sigma\in\Gamma_F$.
This map satisfies some important properties.
\item
For each $\phi\in\Phitemp(G^*)$, the fiber $\Pi_\phi=\mathit{LLV}\inv(\phi)$ is a finite set.
\item
For each $\phi\in\Phitemp(G^*)$, there exists a bijective map
\begin{equation*}
\iota : \Pi_\phi \lra \Irr(\pi_0(S_\phi)),
\end{equation*}
where $\Irr(\pi_0(S_\phi))$ denotes the set of equivalence classes of irreducible representations of the finite group $\pi_0(S_\phi)$, and this bijection $\iota$ satisfies the endoscopic character relations and other nice properties.
Moreover, once we fix a Whittaker datum of $G^*$, then the map $\iota$ is uniquely determined.
\end{enumerate}
\end{conj}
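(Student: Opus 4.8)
The statement as written is really a research programme rather than a single theorem, so the plan is to follow Arthur's global-to-local strategy, realizing Conjecture~1.3 in exactly the classes of groups this paper needs: the quasi-split groups $\Sp_{2n}$, $\SO_{2n+1}$, $\SO_{2n}$, their pure inner forms (the odd special orthogonal groups), and ultimately $\Mp_{2n}$. \textbf{Setup.} For $G^*$ a symplectic or special orthogonal group one realizes $G^*$ as a twisted endoscopic datum of $(\GL_N,\theta)$ with $\theta$ transpose-inverse; for $\Mp_{2n}$ one instead works through the reductive dual pair $(\Mp_{2n},\SO_{2n+1})$. The analytic inputs are the stabilization of the ordinary trace formula for $G^*$, the stabilization of the $\theta$-twisted trace formula for $\GL_N$, and the already-known LLC for general linear groups; the whole argument is an induction on $N$.

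\textbf{Discrete parameters.} One first constructs $\Pi_\phi$ for square-integrable $\phi$. Globally, starting from a suitably chosen family of self-dual cuspidal automorphic representations of $\GL_N$ with controlled ramification, the twisted trace-formula comparison attaches to each global parameter a near-equivalence class of discrete automorphic representations of $G^*$; decomposing this class place by place yields local packets carrying a pairing $\an{\cdot,\pi}$ on $\pi_0(S_\phi)$, initially determined only up to an unknown character. The pure inner twist datum $(\xi,z)$ enters solely through the normalization of the Langlands--Shelstad transfer factors (the Kottwitz sign attached to $z$), while the fixed Whittaker datum is used to declare that the generic member of the packet corresponds to the trivial character of $\pi_0(S_\phi)$; this is what makes $\iota$ unique.

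\textbf{General tempered parameters and the local intertwining relation.} For a non-discrete tempered $\phi$ one writes $\phi$ as induced from a discrete parameter $\phi_M$ on a Levi $M$, and $\Pi_\phi$ consists of the irreducible constituents of $\Ind_P^{G}\sigma$ for $\sigma\in\Pi_{\phi_M}$. The internal parametrization is then governed by the normalized self-intertwining operators $R_P(w,\tilde\sigma,\psi)$ for $w$ in the relative Weyl group, and the local intertwining relation asserts that, through Arthur's $R$-group formalism, these operators realize precisely the characters of $\pi_0(S_\phi)$ predicted by $\an{\cdot,\cdot}$. Establishing it pins down $\iota$ on the nose and, by analytic continuation in the inducing parameter, propagates the endoscopic character identities from the discrete case to all of $\Phitemp(G^*)$. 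Finally one descends to $\Mp_{2n}$ via Howe duality for $(\Mp_{2n},\SO_{2n+1})$: genuine representations of $\Mp_{2n}(F)$ are matched with representations of the odd orthogonal groups in a Witt tower, the groups $S_\phi$ and the pairing $\an{\cdot,\cdot}$ are transported across, and the metaplectic LIR is deduced from the LIR for the (possibly non-quasi-split) $\SO_{2n+1}$ --- which is precisely why the latter is taken as a hypothesis here.

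\textbf{Main obstacle.} The crux is the local intertwining relation itself: one must fix Arthur's $\psi$-normalization of the intertwining operators through the appropriate $\varepsilon$- and $L$-factors and verify that the resulting $R$-group action matches the component group of the centralizer. For $\Mp_{2n}$ there is the further difficulty that the group is not linear algebraic, so endoscopy is not directly available; one has to check instead that the Weil-representation cocycle and the MVW involution are compatible with the theta transfer of intertwining operators. That compatibility is the technical heart of the paper.
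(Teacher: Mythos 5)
You are being asked about Conjecture 1.2 of the paper, which is Vogan's formulation of the local Langlands correspondence for an \emph{arbitrary} quasi-split connected reductive group $G^*$ over a $p$-adic field, quoted from \cite{vog}. The paper gives no proof: it is stated as a conjecture, recalled purely as background, and the paper only ever uses the special cases where it is known (odd special orthogonal groups via Arthur \cite{art13} and M\oe glin--Renard \cite{mr}, and the metaplectic analogue of Gan--Savin \cite{gs}). Your proposal does not prove the statement as written. You restrict at the outset to symplectic, special orthogonal and metaplectic groups, whereas the conjecture asserts the existence of $\mathit{LLV}$ and $\iota$ for \emph{every} quasi-split $G^*$ and all of its pure inner twists; for general $G^*$ the statement is open, and the twisted-endoscopic comparison with $(\GL_N,\theta)$ that your argument rests on simply has no analogue outside groups that arise as twisted endoscopic data of general linear groups. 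So the scope of what you prove (or rather outline) does not match the scope of what is claimed.

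Even within the classical-group cases, what you have written is a programme summary rather than an argument: each ingredient you invoke --- the stabilization of the ordinary and $\theta$-twisted trace formulas, the construction of local packets from near-equivalence classes of discrete automorphic representations, the Whittaker normalization pinning down $\iota$, and the local intertwining relation --- is itself a major theorem occupying hundreds of pages in \cite{art13}, \cite{mok}, \cite{kmsw}, and for the non-quasi-split odd special orthogonal groups the intertwining relation is exactly the statement this paper must \emph{assume} as Hypothesis \ref{lirso}, not something you may take as established. None of these steps is reduced in your text to anything checkable. The honest conclusion is twofold: the general conjecture is not addressed by your method, and the special cases are asserted by citation of a strategy rather than demonstrated, so there is a genuine gap at both levels.
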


In this paper, we consider the metaplectic groups, which are possibly not algebraic groups but whose representation theory is similar to that of algebraic groups.
LLC for metaplectic groups was established by Gan-Savin \cite{gs}, which we now introduce.
The metaplectic group, denoted by $\Mp_{2n}(F)$, is a unique nonlinear two-fold cover of $\Sp_{2n}(F)$ with an exact sequence
\begin{equation*}
1 \lra \{\pm1\} \lra \Mp_{2n}(F) \lra \Sp_{2n}(F) \lra 1.
\end{equation*}
Thus we identify $\Mp_{2n}(F)$ with $\Sp_{2n}(F) \times \{\pm1\}$ as sets.
We say that a representation $\pi$ of $\Mp_{2n}(F)$ is genuine if $\pi((1,-1))$ is not trivial.
Let $\Pitemp(\Mp_{2n})$ be the set of equivalence classes of irreducible genuine tempered admissible representations of $\Mp_{2n}(F)$, and put $\Phitemp(\Mp_{2n})=\Phitemp(\SO_{2n+1})$.
Fix a nontrivial additive character $\psi : F \to \C^1$.
We have LLC for $\Mp_{2n}(F)$ depending on the choice of $\psi$, due to Gan-Savin \cite{gs}:
\begin{thm}
\begin{enumerate}[(1)]
\item
There exists a map
\begin{equation*}
\mathit{LL}_\psi : \Pitemp(\Mp_{2n}) \lra \Phitemp(\Mp_{2n}),
\end{equation*}
with some important properties.
\item
For each $\phi \in \Phitemp(\Mp_{2n})$, the fiber $\Pi_{\phi,\psi}=\mathit{LL}_\psi\inv(\phi)$ is a finite set.
\item
For each $\phi \in \Phitemp(\Mp_{2n})$, there exists a unique bijective map
\begin{equation*}
\iota_\psi : \Pi_{\phi,\psi} \lra \Irr(\pi_0(S_\phi)),
\end{equation*}
which depends on the choice of $\psi$, and this map satisfies some nice properties.\\
\end{enumerate}
\end{thm}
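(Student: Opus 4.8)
The plan is to transport the tempered classification on the odd special orthogonal side to $\Mp_{2n}(F)$ through the local theta correspondence attached to $\psi$. Recall that Arthur \cite{art13} supplies the tempered LLC for the quasi-split group $\SO_{2n+1}=\SO(V^+)$, and M\oe glin-Renard \cite{mr} supplies it for the unique non-quasi-split pure inner form $\SO(V^-)$; together these give, for every $\phi\in\Phitemp(\SO_{2n+1})$, finite packets $\Pi_\phi(\SO(V^\epsilon))$ together with injections $\iota_\epsilon:\Pi_\phi(\SO(V^\epsilon))\inj\Irr(\pi_0(S_\phi))$ whose images partition $\Irr(\pi_0(S_\phi))$ according to the value of the character on the image $z_V\in\pi_0(S_\phi)$ of the nontrivial class in $H^1(F,\SO_{2n+1})$. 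Here $\hat{G}=\Sp_{2n}(\C)$, so $S_\phi=\cent(\myim\phi,\Sp_{2n}(\C))$ is already the group in the statement, consistent with the definition $\Phitemp(\Mp_{2n})=\Phitemp(\SO_{2n+1})$ above.

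First I would set up the bijection. For the reductive dual pair $(\Sp(W),\Or(V^\epsilon))$ with $\dim W=2n$ and $\dim V^\epsilon=2n+1$, the Weil representation determined by $\psi$ produces a theta lift $\theta_{\psi,V^\epsilon}$ from genuine smooth irreducible representations of $\Mp(W)$ to smooth representations of $\Or(V^\epsilon)(F)$; since $\dim V^\epsilon$ is odd, $\Or(V^\epsilon)=\SO(V^\epsilon)\times\{\pm1\}$ and restriction to $\SO(V^\epsilon)$ is harmless. I would then invoke the equal-rank theory: for an irreducible genuine tempered $\wtil\pi$ of $\Mp_{2n}(F)$, exactly one of $\theta_{\psi,V^+}(\wtil\pi)$, $\theta_{\psi,V^-}(\wtil\pi)$ is nonzero (Kudla-Rallis conservation), the nonzero one is irreducible and tempered, and conversely every irreducible tempered representation of $\SO(V^\epsilon)(F)$ arises this way; with the $\Or$/$\SO$ normalization of \cite{gs} this yields a bijection $\Pitemp(\Mp_{2n})\;\llra\;\Pitemp(\SO(V^+))\sqcup\Pitemp(\SO(V^-))$.

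Granting this, the three assertions are formal. One defines $\mathit{LL}_\psi(\wtil\pi)$ to be the Langlands parameter of $\theta_\psi(\wtil\pi)\in\Pitemp(\SO(V^\epsilon))$ under Arthur/M\oe glin-Renard, which gives (1). For $\phi\in\Phitemp(\Mp_{2n})$ the fiber is then $\Pi_{\phi,\psi}=\theta_{\psi,V^+}(\Pi_\phi(\SO(V^+)))\sqcup\theta_{\psi,V^-}(\Pi_\phi(\SO(V^-)))$, which is finite because Arthur's packets are finite and $\theta_\psi$ is injective; this is (2). For (3) one sets $\iota_\psi=\iota_+\circ\theta_{\psi,V^+}\inv$ on the first piece and $\iota_\psi=\iota_-\circ\theta_{\psi,V^-}\inv$ on the second, so that the bijectivity of $\iota_\psi:\Pi_{\phi,\psi}\lra\Irr(\pi_0(S_\phi))$ follows from the images of $\iota_+$ and $\iota_-$ being complementary. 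The dependence on $\psi$ enters only through the splitting used to build the Weil representation: replacing $\psi$ by $\psi_a$ with $\psi_a(x)=\psi(ax)$ twists the relevant $\disc$/Hasse data by $a$, interchanging the roles of $V^+$ and $V^-$ precisely when $a\notin(F^\times)^2$ and relabeling $\iota_\psi$ accordingly.

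The substantive work — and what I expect to be the main obstacle — is not the bijection but verifying that this transported classification has the \emph{expected} properties (the endoscopic character relations, compatibility with parabolic induction, and the behavior under $\psi$-twists), and that temperedness really is preserved by the lift. Temperedness in the equal-rank situation should follow from nonvanishing of the \emph{first} occurrence together with an estimate on the exponents of $\theta_\psi(\wtil\pi)$ (equivalently, from the standard-module-type bounds of \cite{gs}). For parabolic induction one matches Jacquet modules of Weil representations (Kudla's filtration) against Arthur's construction. The hardest part is the endoscopic character identities: one must show that the local theta lift carries the stable and $\kappa$-twisted characters of $\Mp_{2n}$ to those of $\SO(V^\pm)$ with the correct normalizations, which is done by combining the doubling-method integral representation of the character of $\theta_\psi(\wtil\pi)$, see-saw identities relating $(\Mp_{2n},\Or_{2n+1})$ to pairs built from smaller groups, and Arthur's character relations on the orthogonal side, all while tracking the central signs that pin down the labels $z_V$ and the $\psi$-dependence.
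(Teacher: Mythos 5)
Your proposal follows essentially the same route as the paper: transport the Vogan-type tempered LLC for $\SO(V^+)\sqcup\SO(V^-)$ (Arthur and M\oe glin--Renard) to $\Mp_{2n}(F)$ through the $\psi$-theta correspondence, which is exactly how Theorem \ref{llcmp} is obtained here by combining Theorem \ref{sw} (Gan--Savin's local Shimura correspondence, with the Howe duality in residue characteristic $2$ from Gan--Takeda) with Theorem \ref{llcso} and Hanzer's result \cite{han}. The only caveat is your side remark on the $\psi$-dependence (that replacing $\psi$ by $\psi_a$ simply interchanges $V^+$ and $V^-$ for $a\notin (F^\times)^2$) which is an oversimplification of how the correspondence varies with $\psi$, but it plays no role in establishing (1)--(3).
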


Although in general the map $\mathit{LL}$ may not be bijective, there is a formula that describes how the bijection $\iota$ classifies the elements in a same packet in terms of intertwining operators.
Namely, this formula can distinguish the elements of each packet $\Pi_\phi$ more precisely by means of the eigenvalues of intertwining operators.
We call this formula the local intertwining relation.
This of course is closely related to the endoscopic character relations.
Also, it is related to the global theories such as the trace formula: global intertwining operators appear in the main terms in the trace formula, and local intertwining operators are their local factors.

In \cite{art13}, Arthur proved the local intertwining relation for quasi-split special orthogonal and symplectic groups (\cite[Theorem 2.4.1]{art13}).
Mok \cite{mok} and Kaletha-M\'inguez-Shin-White \cite{kmsw} proved for inner forms of unitary groups.
Our aim in this paper is to formulate and prove a local intertwining relation for $\Mp_{2n}(F)$ under the assumption that the local intertwining relation for the non-quasi-split odd special orthogonal groups holds.\\

Now we explain the local intertwining relation and our result in more detail.
Let $G$ be a classical group defined over $F$, and $P$ a proper parabolic subgroup of $G$ with a Levi subgroup $M$ defined over $F$.
We then have a canonical inclusion $\hat{M} \subset \hat{G}$.
Composing this inclusion and an $L$-parameter for $M$ gives an inclusion $\Phitemp(M) \subset \Phitemp(G)$.
Let $\phi \in \Phitemp(M)$ be an $L$-parameter for $M$, and also regard it as an $L$-parameter for $G$.
Then LLC and LLC of Vogan type conjecture that the packet $\Pi_\phi(G)$ consists of the irreducible constituents of the representations that are parabolically induced from the elements of $\Pi_\phi(M)$.
For simplicity, we shall consider only the Vogan type conjecture.
The local intertwining relation can distinguish these constituents $\pi$ of $\Ind_P^G(\pi_M)$ in terms of the eigenvalues of certain maps for each $\pi_M \in \Pi_\phi(M)$.
The relation asserts that for any $x \in \pi_0(S_\phi)$, one can construct an endomorphism $R_P(x, \pi_M)$ of $\Ind_P^G(\pi_M)$ explicitly such that $R_P(x, \pi_M)$ acts on $\pi$ by a scalar multiplication by $\iota(\pi)(x)$.
In other words, we expect that for any $x \in \pi_0(S_\phi)$, the concretely defined endomorphism
\begin{equation*}
R_P(x, \pi_M) \in \End_G(\Ind_P^G(\pi_M))
\end{equation*}
satisfies
\begin{equation*}
R_P(x, \pi_M)|_\pi = \iota(\pi)(x)
\end{equation*}
for $\pi \subset \Ind_P^G(\pi_M)$.
This endomorphism is called the normalized self-intertwining operator.

In general, not only the proof of the local intertwining relation, but also the definition of the normalized self-intertwining operator is not trivial.
It is because we have to consider some constant factors, such as the $\varepsilon$-factors, Kottwitz sign, and the Langlands constants ($\lambda$-factors), to define the normalizing factors.
In particular $\varepsilon$-factors depend on the representation $\pi_M$, so they are particularly important.
See \cite{artast} or \cite{art13} for detail.

In this paper we treat the case that $G$ is a metaplectic group $\Mp_{2n}$.
We shall define normalized intertwining operators $\calR_P(x, \pi_M)$ for $\Mp_{2n}$ in \S\ref{io} by
\begin{equation}\label{intr}
\calR_P(x, \pi_M)
=\gf(\psi)^{d(x, \pi_M)} \gamma(\tfrac{1}{2}, \phi_x, \psi)\inv \gamma(0, \rho^\vee \circ \phi, \psi) \calM(x, \pi_M),
\end{equation}
where $d(x, \pi_M)$ is a certain nonnegative integer, $\phi_x$ and $\rho^\vee \circ \phi$ are certain $L$-parameters, and $\calM(x, \pi_M)$ is an unnormalized intertwining operator.
Our definition of the normalized intertwining operators resembles that of classical linear algebraic groups, but there are three subtle and important differences.
First, unlike the case of linear algebraic groups, we can find that the Weil index $\gf(\psi)$ appears in the normalizing factors.
This is a constant that depends only on the additive character $\psi$.
Second, the gamma factor $\gamma(\frac{1}{2}, \phi_x, \psi)\inv$ at $\frac{1}{2}$ appears.
Third, the choice of the Haar measure on the unipotent radical of a parabolic subgroup of $\Mp_{2n}(F)$ is slightly different from the case of linear algebraic groups.
These will be treated in \S\ref{measu} and \S\ref{io}.

Then we define the normalized self-intertwining operator $R_P(x, \pi_M)$ in \S\ref{io} by using the normalized intertwining operator \eqref{intr}.
The main theorem (Theorem \ref{lirmp}) is the following:
\begin{thm}
Assume the local intertwining relation for the odd special orthogonal groups (Hypothesis \ref{lirso} below).
Let $\phi \in \Phitemp(M)$ be an $L$-parameter for a Levi subgroup $M$ of a parabolic subgroup $P$ of $\Mp_{2n}(F)$, and $\pi_M \in \Pi_{\phi, \psi}(M)$.
Then for any $x \in \pi_0(S_\phi(\Mp_{2n}))$, the normalized self-intertwining operator
\begin{equation*}
R_P(x, \pi_M) \in \End_{\Mp_{2n}(F)}(\Ind_P^{\Mp_{2n}(F)}(\pi_M))
\end{equation*}
satisfies
\begin{equation*}
R_P(x, \pi_M)|_\pi
=\iota_\psi(\pi)(x)
\end{equation*}
for $\pi \subset \Ind_P^{\Mp_{2n}(F)}(\pi_M)$.
\end{thm}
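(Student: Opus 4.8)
The plan is to reduce the local intertwining relation for $\Mp_{2n}$ to the (assumed) local intertwining relation for odd special orthogonal groups via the theta correspondence, exploiting the fact that Gan--Savin's LLC for $\Mp_{2n}$ is itself defined through the theta lift to $\SO_{2n+1}$. First I would set up the dictionary carefully. Given $\phi \in \Phitemp(M)$ for a Levi $M = \GL_{k_1} \times \cdots \times \GL_{k_r} \times \Mp_{2m}$ of $\Mp_{2n}(F)$, the parameter $\phi$ is also an $L$-parameter for the corresponding Levi $M' = \GL_{k_1} \times \cdots \times \GL_{k_r} \times \SO_{2m+1}$ of a (possibly non-quasi-split) $\SO_{2n+1}$, and $S_\phi(\Mp_{2n}) = S_\phi(\SO_{2n+1})$ by the very definition $\Phitemp(\Mp_{2n}) = \Phitemp(\SO_{2n+1})$. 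The theta correspondence $\theta_\psi$ relative to $\psi$ matches $\pi_M \in \Pi_{\phi,\psi}(M)$ with $\theta_\psi(\pi_M) \in \Pi_\phi(M')$, matches $\Ind_P^{\Mp_{2n}(F)}(\pi_M)$ with $\Ind_{P'}^{\SO_{2n+1}(F)}(\theta_\psi(\pi_M))$ compatibly with the decomposition into irreducible constituents, and intertwines the character identifications $\iota_\psi$ and $\iota$. The key technical input here is that theta is exact and compatible with parabolic induction (a Jacquet-module / Kudla-style computation on the Weil representation), so a constituent $\pi \subset \Ind_P^{\Mp_{2n}(F)}(\pi_M)$ corresponds to $\theta_\psi(\pi) \subset \Ind_{P'}^{\SO_{2n+1}(F)}(\theta_\psi(\pi_M))$.

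The central step is then to show that the normalized self-intertwining operator $R_P(x, \pi_M)$ defined in \S\ref{io} corresponds, under this theta-equivariant identification, to the operator $R_{P'}(x, \theta_\psi(\pi_M))$ appearing in Hypothesis \ref{lirso}, for each $x \in \pi_0(S_\phi)$. Granting that, Hypothesis \ref{lirso} gives $R_{P'}(x,\theta_\psi(\pi_M))|_{\theta_\psi(\pi)} = \iota(\theta_\psi(\pi))(x)$, and since Gan--Savin's characterization forces $\iota_\psi(\pi) = \iota(\theta_\psi(\pi))$, the theorem follows. To match the operators one compares the two sides of \eqref{intr}: the unnormalized intertwining operator $\calM(x,\pi_M)$ must be shown to be carried to the analogous unnormalized operator on the $\SO$-side up to an explicit scalar coming from (i) the discrepancy between the Haar measures on the unipotent radicals — this is exactly the ``third difference'' flagged in the introduction and is handled in \S\ref{measu} — and (ii) the behaviour of the Weil representation realizing $\theta_\psi$ under the relevant unipotent integration. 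Then the normalizing factors must be matched: the product $\gamma(\tfrac12,\phi_x,\psi)\inv \gamma(0,\rho^\vee\circ\phi,\psi)$ for $\Mp_{2n}$ versus the corresponding $\gamma$-factors for $\SO_{2n+1}$ differ precisely by a ratio of local $\gamma$- and $\varepsilon$-factors that, by the standard gamma-factor identities (functional equation, $\varepsilon \cdot \varepsilon^\vee$ relations, and the factorization of $\gamma$ through $L$- and $\varepsilon$-factors), collapses to a power of the Weil index $\gf(\psi)$ — accounting for the ``first difference'' — together with the sign/power captured by the integer $d(x,\pi_M)$, which is engineered in \S\ref{io} to make the two sides agree. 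This is where the ``second difference,'' the appearance of $\gamma(\tfrac12,\phi_x,\psi)\inv$ at the central point, is essential: it is the shadow of the half-integral shift in the theta correspondence between $\Mp_{2n}$ and $\SO_{2n+1}$.

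It suffices, by the standard reduction in this subject (as in \cite[\S2.4]{art13}), to treat the case where $P$ is maximal and $x$ runs over a generating set of $\pi_0(S_\phi)$; the case of a one-dimensional-in-$x$ direction with $\pi_M$ a discrete-series-type representation of the $\GL$-factor twisted against $\Mp_{2m}$, and the associated sign $\epsilon(\tfrac12,\ldots)$, is the crux. Compatibility of the full operators $R_P(x,\cdot)$ with composition in $x$ — the cocycle relation that makes $x \mapsto R_P(x,\pi_M)$ a genuine representation of $\pi_0(S_\phi)$ rather than a projective one — also has to be checked, but this follows once the individual matchings with the $\SO$-side are in place, since the cocycle relation is known there by Hypothesis \ref{lirso} and the theta identification is multiplicative. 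I expect the main obstacle to be the bookkeeping of constants in the operator matching: tracking the precise power of $\gf(\psi)$, the Haar-measure Jacobian, and the $\varepsilon$-factor contributions through the theta integral so that they assemble exactly into the normalization \eqref{intr}, with no leftover scalar. The geometric identity $R_P(x,\pi_M) \leftrightarrow R_{P'}(x,\theta_\psi(\pi_M))$ is morally clear, but making every constant land correctly — especially verifying that $d(x,\pi_M)$ is the right integer — is the delicate part, and is presumably why the normalized intertwining operators for $\Mp_{2n}$ require the three careful modifications highlighted in the introduction.
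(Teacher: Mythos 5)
Your proposal is correct and follows essentially the same route as the paper: transfer the relation from $\SO_{2n+1}$ (where it is assumed as Hypothesis \ref{lirso}) to $\Mp_{2n}$ through the theta correspondence that defines $\mathit{LL}_\psi$, the crux being that theta intertwines the normalized self-intertwining operators once the Weil index, the Haar-measure normalization, and the $\gamma$-factor at $\tfrac{1}{2}$ are accounted for. The paper makes your ``theta-equivariant identification'' precise by constructing an explicit Gan--Ichino-type zeta-integral equivariant map $\calT$ (Proposition \ref{reduction}, Proposition \ref{comm}) and handling general parabolics by induction in stages, which is the concrete realization of the matching you sketch.
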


\begin{ntn}
Let $F$ be a $p$-adic field, and $|-|_F$ the normalized absolute value on $F$.
We shall write $W_F$ and $\WD_F=W_F \times \SL_2(\C)$ for the Weil group and the Weil-Deligne group of $F$, respectively.
We also write $\Gamma_F$ for the Galois group of $F$.
Let $(-,-)_F$ denote the quadratic Hilbert symbol of $F$.
The Hilbert symbol defines a non-degenerate bilinear form on $F^\times/{F^\times}^2$.
Fix a non-trivial additive character $\psi : F \to \C^1=\set{z \in \C | z \overline{z}=1}$.
For any $c \in F$, we shall define an additive character $\psi_c$ of $F$ by
\begin{equation*}
\psi_c(x)=\psi(cx).
\end{equation*}
For a non-degenerate quadratic form $q$ on a finite dimensional vector space over $F$, we write $\gf(\psi \circ q)$ for the unnormalized Weil index of $\psi \circ q$, a character of second degree.
See \cite[Appendix]{rao} for the definition of the Weil index.
Note that, if a quadratic form $q$ is an orthogonal direct sum $q_1\dot{+}q_2$ of two non-degenerate quadratic forms $q_1$ and $q_2$, then
\begin{equation*}
\gf(\psi \circ q)
=\gf(\psi \circ q_1) \gf(\psi \circ q_2).
\end{equation*}
Let us write $\gf(\psi)$ for the unnormalized Weil index of $[x \mapsto \psi(x^2)]$, and $\gf(a, \psi)$ for the normalized Weil index, which is defined by $\gf(a, \psi)=\gf(\psi_a)/\gf(\psi)$ for $a \in F^\times$.
For a totally disconnected locally compact group $G$, let $\Irr(G)$ denote the set of equivalence classes of irreducible smooth admissible representations of $G$.
In this paper, we treat only smooth admissible representations over $\C$, except representations of $\WD_F$.
For simplicity, by representations of $G$, we mean such representations of $G$.
If $G$ is a linear algebraic group (resp. a metaplectic group) over $F$, we shall write $\Pitemp(G)$ for the set of equivalence classes of irreducible tempered representations (resp. irreducible genuine tempered representations) of $G(F)$, and we may write $G=G(F)$ by abuse of notation.
For an algebraic group $H$, we define the component group of $H$ by $\pi_0(H)=H/H^\circ$, where $H^\circ$ is the identity component of $H$.
The connected complex Langlands dual group of a connected reductive linear algebraic group $G$ is denoted by $\hat{G}$.
For any finite dimensional vector space $X$ over $F$, we write $\calS(X)$ for the space of compactly supported locally constant $\C$-valued functions on $X$.
For any representation $\rho$, we write $\rho^\vee$ for its contragredient.
\end{ntn}

\begin{ack}
I would like to thank my supervisor A. Ichino for many advices.
\end{ack}
\section{Metaplectic and orthogonal groups}
Let us begin with a brief review of the metaplectic and orthogonal groups.
In this section, we fix some notations for the groups of interest in this paper.
\subsection{Symplectic group}\label{symp}
First we introduce some notation on symplectic groups.
Let $(W, \an{-,-}_W)$ be a symplectic vector space of dimension $2n$ over $F$, with the associated symplectic group
\begin{equation*}
\Sp(W)
=\Set{ g \in \GL(W) | \an{ gw, gw' }_W = \an{ w, w' }_W, \quad \forall w, w' \in W }.
\end{equation*}
Choose a symplectic basis $\set{y_1, \dots, y_n, y^*_1, \dots, y^*_n}$ of $W$, and put
\begin{align*}
&Y_k = \myspan_F(y_1, \dots, y_k),&
&Y^*_k = \myspan_F(y^*_1,\dots , y^*_k),&
\end{align*}
for $k=1, \ldots, n$, so that we have a standard complete polarization $W = Y_n \oplus Y^*_n$.
We also let 
\begin{equation*}
W_{n-k}
= \myspan_F(y_{k+1}, \dots,y_n, y^*_{k+1}, \dots y^*_n),
\end{equation*}
so that
\begin{equation*}
W
= Y_k \oplus W_{n-k} \oplus Y^*_k.
\end{equation*}
If $n=0$, then $W=\{0\}$, $\Sp(W)=\{1\}$, and the basis is the empty set.\\

We now describe the parabolic subgroups of $\Sp(W)$ up to conjugacy.
Let $\bfk=(k_1, \ldots, k_m)$ be a sequence of positive integers such that $k_1+\cdots+k_m \leq n$, and put $k_0 = 0$, $n_0 = n-(k_1+\dots+k_m)$.
Consider a flag of isotropic subspaces
\begin{equation*}
Y_{k_1} \subset Y_{k_1+k_2} \subset \dots \subset Y_{k_1+\cdots +k_m}
\end{equation*}
in $Y_n$.
The stabilizer of such a flag is a parabolic subgroup $\mpsp{P_\bfk}$ whose Levi subgroup $\mpsp{M_\bfk}$ is given by
\begin{equation*}
\mpsp{M_\bfk}
\cong \GL_{k_1} \times \cdots \times \GL_{k_m} \times \Sp(W_{n_0}),
\end{equation*}
where $\GL_{k_i}$ is identified with the general linear group of a $k_i$-dimensional space
\begin{equation*}
\myspan_F(y_{k_0+\cdots+k_{i-1}+1}, \dots ,y_{k_0+\cdots+k_{i-1}+k_i}).
\end{equation*}
The reason why we use the overlines for $\mpsp{M_\bfk}$ and $\mpsp{P_\bfk}$ will be clear in the next subsection.
We shall write $N_\bfk$ for the unipotent radical of $\mpsp{P_\bfk}$.
Parabolic subgroups of this form are standard with respect to the splitting $\splw$ defined in \S\ref{weyl}.
Any parabolic subgroup of $\Sp(W)$ is conjugate to a parabolic subgroup of this form.
If $m=1$ and $\bfk=(k)$, we shall write $\mpsp{P_k}$, $\mpsp{M_k}$, and $N_k$ instead of $\mpsp{P_\bfk}$, $\mpsp{M_\bfk}$, and $N_\bfk$, respectively for simplicity.
\subsection{Metaplectic group}\label{mp}
Next we come to metaplectic groups.
If $n=0$, we put $\Mp(W)=\{\pm1\}$.
If $n\geq 1$, then the symplectic group $\Sp(W)$ has a unique nonlinear two-fold central extension $\Mp(W)$, which is called the metaplectic group:
\begin{equation}\label{2mp}
1 \lra \{\pm1\} \lra \Mp(W) \lra \Sp(W) \lra 1.
\end{equation}
As a set, we may write
\begin{equation*}
\Mp(W)
=\Sp(W) \times\{\pm1\}
\end{equation*}
with group law given by
\begin{equation*}
(g,\epsilon) \cdot (g',\epsilon')
= (gg', \epsilon\epsilon' c(g,g')),
\end{equation*}
where $c$ is Ranga Rao's normalized cocycle, which is a 2-cocycle on $\Sp(W)$ valued in $\{\pm1\}$.
See \cite[\S5]{rao} or \cite[\S2]{szp} for detail.
For any subset $A \subset \Sp(W)$, we write $\wtil{A}$ for its preimage under the covering map $\Mp(W) \to \Sp(W)$.
Also, for any subset $B \subset \Mp(W)$, we write $\mpsp{B}$ for its image under the covering map. \\

By the parabolic subgroups of $\Mp(W)$ and their Levi subgroups, we mean the preimages of the parabolic subgroups of $\Sp(W)$ and their Levi subgroups, respectively.
Not only the metaplectic group $\Mp(W)$, but also its parabolic subgroups and Levi subgroups are in general nonlinear.\\

Let us describe the parabolic subgroup $P_\bfk = \wtil{\mpsp{P_\bfk}}$ of $\Mp(W)$, which we shall call a standard parabolic subgroup (with respect to the splitting $\splw$).
The covering \eqref{2mp} splits over the unipotent radical $N_\bfk$ of $\mpsp{P_\bfk}$ by $n \mapsto (n, 1)$ so we may canonically regard $N_\bfk$ as a subgroup of $\Mp(W)$, and one has a Levi decomposition
\begin{equation*}
P_\bfk
= M_\bfk \ltimes N_\bfk,
\end{equation*}
where $M_\bfk = \wtil{\mpsp{M_\bfk}}$ is a Levi subgroup.
The covering $M_\bfk$ over $\mpsp{M_\bfk} \cong \GL_{k_1} \times \GL_{k_2} \times \cdots \times \GL_{k_m} \times \Sp(W_{n_0})$ is given by
\begin{equation*}
M_\bfk
\cong \ML_{k_1} \times_{\mu_2} \cdots \times_{\mu_2} \ML_{k_m} \times_{\mu_2} \Mp(W_{n_0}).
\end{equation*}
Here, the restriction of the covering to $\Sp(W_{n_0})$ is nothing but the metaplectic cover $\Mp(W_{n_0})$ of $\Sp(W_{n_0})$, and the covering over $\GL_{k_i}$ is
\begin{equation*}
\ML_{k_i}
=\GL_{k_i} \times \set{\pm1}
\end{equation*}
with group law
\begin{equation*}
(g,\epsilon) \cdot (g',\epsilon')
= (gg' , \epsilon \epsilon' (\det g, \det g')_F).
\end{equation*}

Let $k$ be a positive integer.
The (genuine) representation theory of $\ML_k$ can be easily related to the representation theory of $\GL_k$.
Indeed, for any irreducible representation $\tau$ of $\GL_k$, we can attach an irreducible genuine representation $\wtil{\tau}$ of $\ML_k$ as in \cite[\S2.4]{gs}, and this attachment $\tau \mapsto \wtil{\tau}$ gives a bijection between $\Irr(\GL_k)$ and $\Irr(\ML_k)$, where $\Irr(\ML_k)$ is the set of equivalence classes of irreducible genuine representations of $\ML_k$.
We stress that this bijection depends on the choice of the additive character $\psi$ because $\wtil{\tau}$ is the twist $\tau \otimes \chi_\psi$ by a genuine character $\chi_\psi$, which is defined by using $\psi$, as in \cite[\S2.4]{gs}.
\subsection{Orthogonal group}\label{orth}
Now we come to the orthogonal groups.
Let $V$ be a $(2n+1)$-dimensional vector space over $F$ equipped with a non-degenerate quadratic form $q=q_V$ of discriminant $1$.
Then we define a symmetric bilinear form $b_q$ associated to $q$ by
\begin{equation*}
b_q(v,v')
= q(v+v') - q(v) -q(v').
\end{equation*}

If $n \geq 1$, up to isomorphism, there are precisely two such quadratic spaces $V$.
One of them, to be denoted by $V^+$, has maximal isotropic subspaces of dimension $n$, whereas the other has maximal isotropic subspaces of dimension $n-1$, to be denoted by $V^-$.
As such, we call the former the split quadratic space and the latter the non-split one.
We shall write
\begin{align*}
\epsilon(V)
=\begin{cases}
+1, & V=V^+; \\
-1, & V=V^-.
\end{cases}
\end{align*}
If $n=0$, we have only one such $V$ up to isomorphism, and put $V^+= V$ and $\epsilon(V)=+1$.\\

Let
\begin{equation*}
\Or(V)
=\Set{ h \in \GL(V) | q(hv) = q(v), \quad \forall v \in V}
\end{equation*}
be the associated orthogonal group.
Then observe that $\Or(V) = \SO(V) \times \{\pm1\}$, where
\begin{equation*}
\SO(V)
=\Or(V) \cap \SL(V)
\end{equation*}
is the special orthogonal group.
The group $\SO(V)$ is split (resp. non-quasi-split) if $V$ is the split (resp. non-split) quadratic space.
If $n \geq 1$, up to isomorphism, there are precisely two pure inner twists of $\SO(V^+)$, namely $\SO(V^+)$ and $\SO(V^-)$.
Note that the Kottwitz sign (\cite{kot}) of $\SO(V)$ is equal to $\epsilon(V)$.\\

Set $r$ to be the dimension of a maximal isotropic subspace of $V$, so that $r=n-(1-\epsilon(V))/2$.
Choose a basis $\set{x_1, \dots, x_n , x_0,   x^*_1, \dots, x^*_n}$ of $V$ such that
\begin{align*}
&b_q(x_i, x_j) = b_q(x^*_i, x^*_j) = 0,&
&b_q(x_i, x^*_j) = \delta_{i,j},& \\
&b_q(x_0, x_i) = b_q(x_0,x^*_i) =0,&
&q(x_0) = 1,&
\end{align*}
for $1\leq i,j \leq r$, and if $r=n-1$,
\begin{equation*}
b_q(x_n,x_i)
=b_q(x_n,x^*_i)
=b_q(x^*_n,x_i)
=b_q(x^*_n, x^*_i)
=0,
\end{equation*}
for any $1\leq i \leq r$.
For each $1\leq k\leq r$, put
\begin{align*}
&X_k = \myspan_F(x_1, \dots, x_k),&
&X^*_k = \myspan_F(x^*_1,\dots , x^*_k),&\\
&V_{n-k} = \myspan_F(x_{k+1}, \dots,x_n, x_0,  x^*_{k+1}, \dots x^*_n),&
\end{align*}
so that
\begin{equation*}
V
=X_k \oplus V_{n-k} \oplus X^*_k.\\
\end{equation*}

We now describe the parabolic subgroups of $\SO(V)$ up to conjugacy.
Let $\bfk=(k_1,\ldots, k_m)$ be a sequence of positive integers such that $k_1+\cdots+k_m \leq r$.
Put $k_0 = 0$ and $n_0 = n - (k_1+ \cdots + k_m)$.
Consider a flag of isotropic subspaces
\begin{equation*}
X_{k_1} \subset X_{k_1+k_2} \subset \dots \subset X_{k_1+\cdots +k_m}
\end{equation*}
in $X_r$.
The stabilizer of such a flag is a parabolic subgroup $Q_\bfk$ whose Levi subgroup $L_\bfk$ is given by
\begin{equation*}
L_\bfk
\cong \GL_{k_1} \times \cdots \times \GL_{k_m} \times \SO(V_{n_0}),
\end{equation*}
where $\GL_{k_i}$ is identified with the general linear group of a $k_i$-dimensional space
\begin{equation*}
\myspan_F(x_{k_0+\cdots+k_{i-1}+1}, \dots ,x_{k_0+\cdots+k_{i-1}+k_i}).
\end{equation*}
We shall write $U_\bfk$ for the unipotent radical of $Q_\bfk$.
Parabolic subgroups of this form are standard with respect to the splitting $\splv$ defined in \S\ref{weyl} if $V=V^+$.
Any parabolic subgroup of $\SO(V)$ is conjugate to a parabolic subgroup of this form.
\section{Tempered $L$-parameters for $\Mp(W)$ and $\SO(V)$}
In this section, we recall the notion of $L$-parameters for $\Mp(W)$ and $\SO(V)$.
See \cite{ggp} for detail.
\subsection{Symplectic representations of $\WD_F$ and their component groups}
We say that a homomorphism $\phi : \WD_F \to \GL_d(\C)$ is a representation of $\WD_F$ if
\begin{itemize}
\item
$\phi(\mathrm{Frob})$ is semi-simple, where $\mathrm{Frob} \in W_F$ is a geometric Frobenius;
\item
the restriction of $\phi$ to $\SL_2(\C)$ is algebraic;
\item
the restriction of $\phi$ to $W_F$ is smooth.
\end{itemize}
We call $\phi$ tempered if the image of $W_F$ is bounded.
We say that $\phi$ is symplectic if there exists a non-degenerate anti-symmetric bilinear form $B : \C^d \times \C^d \to \C$ such that $B(\phi(w)x, \phi(w)y) = B(x, y)$ for any $x, y \in \C^d$ and $w \in \WD_F$.
In this case, $\phi$ is self-dual.\\

Let $\phi : \WD_F \to \GL_d(\C)$ be a tempered symplectic representation.
By changing bases if necessary, we may assume that $\phi : \WD_F \to \Sp_d(\C)$.
Then, by \cite[\S4]{ggp}, we can write
\begin{equation*}
\phi
=\bigoplus_{i\in I_\phi} \ell_i \phi_i \ \oplus \ (\varphi \oplus \varphi^\vee),
\end{equation*}
where $\ell_i$ are positive integers, $I_\phi$ is an indexing set for mutually inequivalent irreducible symplectic representations $\phi_i$ of $\WD_F$, and $\varphi$ is a representation of $\WD_F$ such that all irreducible summands are non-symplectic.
Let $S_\phi = \cent(\myim \phi, \Sp_d(\C))$ be the centralizer of the image $\myim(\phi)$ in $\Sp_d(\C)$.
Then by \cite[\S4]{ggp}, its component group $\pi_0(S_\phi)$ is canonically identified with a free $\Z/2\Z$-module of rank $\#I_\phi$:
\begin{equation*}
\pi_0(S_\phi)
\cong \bigoplus_{i \in I_\phi} (\Z/2\Z) a_i,
\end{equation*}
where $\{a_i\}$ is a formal basis associated to $\{\phi_i\}$.
In the rest of this paper, we identify $\pi_0(S_\phi)$ with $\oplus_i(\Z/2\Z)a_i$.
We shall write $z_\phi$ for the image of $-1\in S_\phi$ in $\pi_0(S_\phi)$.
\subsection{Tempered $L$-parameters for $\Mp(W)$ and $\SO(V)$}
Let $\Phitemp(\GL_k)$ be the set of equivalence classes of tempered $L$-parameters for $\GL_k$.
Recall that it can be identified with the set of equivalence classes of tempered representations $\phi : \WD_F \to \GL_k(\C)$ of dimension $k$.
Now let $\Phitemp(\Mp_{2n})$ and $\Phitemp(\SO_{2n+1})$ be the set of equivalence classes of tempered $L$-parameters for $\Mp(W)$ and $\SO(V)$, respectively.
Then by \cite[\S11, \S8]{ggp}, we can identify $\Phitemp(\Mp_{2n})$ and $\Phitemp(\SO_{2n+1})$ with the set of equivalence classes of tempered symplectic representations $\phi : \WD_F \to \Sp_{2n}(\C)$ of dimension $2n$.\\

Let $\bfk = (k_1,\ldots,k_m)$ and $n_0$ be as in the previous section.
For $(G,P,M) =(\Mp(W), P_\bfk, M_\bfk)$ or $(\SO(V), Q_\bfk, L_\bfk)$, put
\begin{align*}
&\hat{G} = \Sp_{2n}(\C),&
&\hat{M} = \GL_{k_1}(\C) \times \cdots \times \GL_{k_m}(\C) \times \Sp_{2n_0}(\C),&
\end{align*}
with a standard embedding $\hat{M} \inj \hat{G}$ as a Levi subgroup of a standard parabolic subgroup $\hat{P}$ of $\hat{G}$.
Let $\phi$ be a tempered $L$-parameter for $G$ with the image $\myim(\phi)$ in $\hat{M}$.
This is of the form
\begin{equation}\label{phim}
\phi
= \phi_1 \oplus \cdots \oplus \phi_m \oplus \phi_0 \oplus \phi_m^\vee \oplus \cdots \oplus \phi_1^\vee
\end{equation}
where $\phi_i \in \Phitemp(\GL_{k_i})$ for $i=1,\ldots,m$, and $\phi_0 \in \Phitemp(\Mp_{2n_0})=\Phitemp(\SO_{2n_0+1})$.
Let $A_{\hat{M}}$ be the maximal central torus of $\hat{M}$.
Put
\begin{align*}
\NN_\phi(M, G)
&= \norm(A_{\hat{M}}, S_\phi) / \cent(A_{\hat{M}}, S_\phi^\circ),\\
\wl_\phi(M, G)
&= \norm(A_{\hat{M}}, S_\phi) / \cent(A_{\hat{M}}, S_\phi),\\
S_\phi^\natural(M, G)
&= \norm(A_{\hat{M}}, S_\phi) / \norm(A_{\hat{M}}, S_\phi^\circ).
\end{align*}
We have a natural surjection
\begin{equation}\label{x}
\NN_\phi(M,G) \surj S_\phi^\natural(M,G),
\end{equation}
natural inclusions
\begin{align*}
&\wl_\phi(M,G) \subset \wl(\hat{M}, \hat{G}),&
&S_\phi^\natural(M, G) \subset \pi_0(S_\phi),&
\end{align*}
and a natural short exact sequence
\begin{equation*}
1 \lra \pi_0(S_{\phi_0}) \lra \NN_\phi(M, G) \lra \wl_\phi(M, G) \lra 1.
\end{equation*}
By applying \cite[p.104]{art13} or \cite[p.103, after (2.4.1)]{kmsw} to $\SO(V)$, the injection $\pi_0(S_{\phi_0}) \to \NN_\phi(M, G)$ admits a canonical splitting
\begin{equation*}
\NN_\phi(M, G) = \pi_0(S_{\phi_0}) \times \wl_\phi(M, G).
\end{equation*}
\section{Local Langlands correspondence for $\Mp(W)$ and the main theorem}
In this section, we summarize some properties of the local Langlands correspondence (LLC) for metaplectic groups, and state the main theorem (Theorem \ref{lirmp}).
The correspondence is defined by combining the local Shimura correspondence with LLC for odd special orthogonal groups, which we shall summarize in \S\ref{Llclirso} and\S\ref{Lsc} below.

The local Langlands correspondence for metaplectic groups was established by Gan-Savin.
(\cite[Corollary 1.2, Theorem 1.3]{gs}, and \cite{han} for the last assertion):
\begin{thm}\label{llcmp}
\begin{enumerate}[(1)]
\item
There exists a surjection (depending on $\psi$)
\begin{equation*}
\mathit{LL}_\psi : \Pitemp(\Mp(W)) \lra \Phitemp(\Mp_{2n}),
\end{equation*}
with finite fibers $\Pi_{\phi, \psi} = \Pi_{\phi, \psi}(\Mp(W)) = \mathit{LL}_\psi\inv(\phi)$.
\item
For each $\phi \in \Phitemp(\Mp_{2n})$, there exists a unique bijection (depending on $\psi$)
\begin{equation*}
\iota_\psi : \Pi_{\phi, \psi} \lra \Irr(\pi_0(S_\phi)).
\end{equation*}
\item
Let $\bfk = (k_1, \ldots, k_m)$, $n_0=n-(k_1+\cdots+k_m) \geq 0$, and let $\phi \in \Phitemp(\Mp_{2n})$ be of the form \eqref{phim}.
Then we have
\begin{align*}
\Pi_{\phi, \psi}
=\Set{
\pi
|
\pi \subset \Ind_{P_\bfk}^{\Mp(W)}(\wtil{\tau_1} \otimes \cdots \otimes \wtil{\tau_m} \otimes \pi_0), \text{irreducible constituent},
\quad \pi_0 \in \Pi_{\phi_0, \psi} 
},
\end{align*}
where $\tau_i$ is the representation of $\GL_{k_i}$ which corresponds to $\phi_i$, $i=1,\ldots, m$.
Moreover for any $\pi_0 \in \Pi_{\phi_0, \psi}$, we have
\begin{align*}
\Ind_{P_\bfk}^{\Mp(W)}(\wtil{\tau_1} \otimes \cdots \otimes \wtil{\tau_m} \otimes \pi_0)
=\bigoplus_{\substack{\pi \in \Pi_{\phi, \psi} \\ \iota_\psi(\pi)|_{\pi_0(S_{\phi_0})} = \iota_\psi(\pi_0)}}  \pi.
\end{align*}
\end{enumerate}
\end{thm}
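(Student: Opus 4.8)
The plan is to obtain $\mathit{LL}_\psi$, the packets $\Pi_{\phi,\psi}$, and the bijections $\iota_\psi$ by transporting the local Langlands correspondence for odd special orthogonal groups along the $\psi$-twisted local Shimura correspondence, and then to read off assertion~(3) from the corresponding statement for $\SO_{2n+1}$ together with the behaviour of theta lifting under parabolic induction. First I would set up the local theta correspondence for the reductive dual pair $(\Mp(W),\Or(V))$ with $\dim W=2n$ and $\dim V=2n+1$ of discriminant $1$, using the Weil representation $\omega_\psi$ (which is genuine on the $\Mp(W)$-factor, since $\dim V$ is odd). The input I want, to be recorded in \S\ref{Lsc}, is that applying this correspondence to the two inner forms $V^{+}$ and $V^{-}$ simultaneously yields a $\psi$-dependent bijection
\begin{equation*}
\theta_\psi : \Pitemp(\Mp(W)) \overset{\sim}{\lra} \Pitemp(\SO(V^{+})) \sqcup \Pitemp(\SO(V^{-})).
\end{equation*}
This rests on: Howe duality, so each small theta lift is irreducible or zero; the $\epsilon$-dichotomy and conservation relation (Kudla--Rallis, Sun--Zhu) for this equal-rank pair, giving that for every irreducible genuine representation exactly one of the two big theta lifts is nonzero; the fact that for a tempered such representation the nonzero lift already occurs at $\dim=2n+1$ and is itself tempered and irreducible (Gan--Savin \cite{gs}, via the discrete series case of Waldspurger and Rallis together with compatibility of theta with parabolic induction); and the splitting $\Or(V^{\epsilon})=\SO(V^{\epsilon})\times\{\pm1\}$, which lets one pass between $\Or$- and $\SO$-representations.

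Next I would invoke the local Langlands correspondence for odd special orthogonal groups: Arthur \cite{art13} for the split form $\SO(V^{+})$ and M\oe glin--Renard \cite{mr} for the non-quasi-split inner form $\SO(V^{-})$. Together these give a surjection $\Pitemp(\SO(V^{+}))\sqcup\Pitemp(\SO(V^{-}))\surj\Phitemp(\SO_{2n+1})$ with finite fibers, and for each $\phi$ a bijection of the fiber onto $\Irr(\pi_0(S_\phi))$, canonical once a Whittaker datum of $\SO(V^{+})$ is fixed; we fix the one determined by $\psi$. Since $\Phitemp(\Mp_{2n})$ is by definition equal to $\Phitemp(\SO_{2n+1})$, composing with $\theta_\psi\inv$ gives the surjection $\mathit{LL}_\psi$ of~(1) with finite fibers $\Pi_{\phi,\psi}=\theta_\psi\inv(\Pi_\phi(\SO(V^{+}))\sqcup\Pi_\phi(\SO(V^{-})))$ and the bijection $\iota_\psi$ of~(2). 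Uniqueness of $\iota_\psi$ is inherited from the orthogonal side: $\psi$ simultaneously fixes the Whittaker normalization there (matching the $\psi$-generic member of $\Pi_{\phi,\psi}$ with the trivial character) and the genuine character $\chi_\psi$ of \S\ref{mp} used to identify $\Irr(\ML_k)$ with $\Irr(\GL_k)$, so no unpinned choice remains; the assertion of \cite{han} that there is no residual sign ambiguity is folded in here.

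For~(3) I would reduce to the analogous statement for $\SO_{2n+1}$ by showing that $\theta_\psi$ intertwines parabolic induction from $P_\bfk$ with that from $Q_\bfk$. Concretely, for $M_\bfk\cong\ML_{k_1}\times_{\mu_2}\cdots\times_{\mu_2}\ML_{k_m}\times_{\mu_2}\Mp(W_{n_0})$ and the matching Levi $L_\bfk\cong\GL_{k_1}\times\cdots\times\GL_{k_m}\times\SO(V_{n_0})$, Kudla's computation of the theta lift of an induced representation supplies an isomorphism
\begin{equation*}
\theta_\psi\bigl(\Ind_{P_\bfk}^{\Mp(W)}(\wtil{\tau_1}\otimes\cdots\otimes\wtil{\tau_m}\otimes\pi_0)\bigr)
\;\cong\;
\Ind_{Q_\bfk}^{\SO(V)}(\tau_1\otimes\cdots\otimes\tau_m\otimes\theta_\psi(\pi_0)),
\end{equation*}
where $V$ is the member of dimension $2n+1$ of the same Witt tower as $V_{n_0}$ (so the ambient inner form is the one compatible with $\pi_0$) and the $\GL$-factors are matched via $\wtil{\tau_i}\leftrightarrow\tau_i$ of \S\ref{mp}. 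Feeding in the decomposition, on the orthogonal side, of the right-hand side as the direct sum over the members of $\Pi_\phi(\SO(V))$ whose associated character of $\pi_0(S_\phi)$ restricts on $\pi_0(S_{\phi_0})$ to that of $\theta_\psi(\pi_0)$, and then transporting back through the bijection $\theta_\psi$ of packets, yields both displayed identities of~(3).

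The step I expect to be the main obstacle is the first one — establishing that $\theta_\psi$ is a temperedness-preserving bijection in this equal-rank regime: that for a tempered genuine $\pi$ the nonzero lift occurs at the equal-rank level $\dim=2n+1$ and is irreducible tempered, that inequivalent representations have inequivalent lifts, and that exactly one of $V^{+},V^{-}$ realizes this nonzero lift. This is the genuinely representation-theoretic heart, resting on the conservation relations together with Waldspurger's and Rallis' analysis of first occurrence and of the square-integrable case. Once it is in place, Steps~2 and~3 reduce to a formal transport of structure across $\theta_\psi$ plus bookkeeping about Whittaker data, central characters, and the $\GL$-factor identifications.
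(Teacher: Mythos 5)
Your proposal follows essentially the same route as the paper: the paper obtains this theorem by combining the Gan--Savin local Shimura correspondence $\Theta_\psi$ and its temperedness/parabolic-induction compatibility (Theorem \ref{sw}, \cite{gs}) with LLC of Vogan type for $\SO(V^\pm)$ (Theorem \ref{llcso}, \cite{art13}, \cite{mr}) and with \cite{han}, exactly as you do. The only slight discrepancy is in where Hanzer's result enters: the paper uses \cite{han} for the last assertion of (3) (the multiplicity-free direct-sum decomposition, which does not follow merely from transporting the $\SO$-side decomposition through the bijection of constituents), whereas you fold it into the uniqueness claim in (2).
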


In the setting of Theorem \ref{llcmp} (3), for $w \in \wl_\phi(M_{\bfk},\Mp(W))$, let
\begin{align*}
R_{P_\bfk}(w, \wtil{\tau_1} \otimes \cdots \otimes \wtil{\tau_m} \otimes \pi_0)
\in
\End_{\Mp(W)}(\Ind^{\Mp(W)}_{P_\bfk}(\wtil{\tau_1} \otimes \cdots \otimes \wtil{\tau_m} \otimes \pi_0))
\end{align*}
be the normalized self-intertwining operator defined in \S\ref{io} below.
Then, we can state the main theorem:
\begin{thm}\label{lirmp}
Assume the local intertwining relation for the odd special orthogonal groups (Hypothesis \ref{lirso} below).
Let $x_w \in S_\phi^\natural(M_\bfk,\Mp(W))$ be the image of $w \in \wl_\phi(M_\bfk,\Mp(W))$ under the natural surjection \eqref{x}.
Then, the restriction of $R_{P_\bfk}(w, \wtil{\tau_1} \otimes \cdots \otimes \wtil{\tau_m} \otimes \pi_0)$ to $\pi \subset \Ind_{P_\bfk}^{\Mp(W)}(\wtil{\tau_1} \otimes \cdots \otimes \wtil{\tau_m} \otimes \pi_0)$ is the scalar multiplication by $\iota_\psi(\pi)(x_w)$.
\end{thm}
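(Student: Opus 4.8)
The plan is to reduce the metaplectic local intertwining relation to the known (hypothesized) one for odd special orthogonal groups via the local Shimura correspondence, which is the mechanism by which $\mathit{LL}_\psi$ and $\iota_\psi$ are defined in the first place. First I would recall from \S\ref{Lsc} that the Shimura correspondence attaches to $\pi_0 \in \Pi_{\phi_0,\psi}(\Mp(W_{n_0}))$ a representation $\sigma_0 \in \Pi_{\phi_0}(\SO(V_{n_0}))$ (on the appropriate pure inner form, determined by $\iota_\psi(\pi_0)(z_{\phi_0})$), in a way compatible with parabolic induction: inducing $\wtil{\tau_1}\otimes\cdots\otimes\wtil{\tau_m}\otimes\pi_0$ up to $\Mp(W)$ corresponds, under the theta/Shimura machinery, to inducing $\tau_1\otimes\cdots\otimes\tau_m\otimes\sigma_0$ up to $\SO(V)$. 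Since $\Phitemp(\Mp_{2n})=\Phitemp(\SO_{2n+1})$, the component group $\pi_0(S_\phi)$ is literally the same object on both sides, and likewise $\wl_\phi(M_\bfk,\Mp(W))=\wl_\phi(L_\bfk,\SO(V))$ and $S_\phi^\natural(M_\bfk,\Mp(W))=S_\phi^\natural(L_\bfk,\SO(V))$, so the element $x_w$ and the target scalar $\iota_\psi(\pi)(x_w)$ transport verbatim.

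The key technical step is to compare the normalized self-intertwining operator $R_{P_\bfk}(w,\wtil{\tau_1}\otimes\cdots\otimes\wtil{\tau_m}\otimes\pi_0)$ on the metaplectic side with $R_{Q_\bfk}(w,\tau_1\otimes\cdots\otimes\tau_m\otimes\sigma_0)$ on the orthogonal side. Both are built from an unnormalized intertwining operator times a product of normalizing constants of the shape $\gamma(\tfrac12,\phi_x,\psi)\inv\gamma(0,\rho^\vee\circ\phi,\psi)$ (times, on the metaplectic side, the extra Weil-index factor $\gf(\psi)^{d(x,\pi_M)}$). I would argue that: (i) the unnormalized operators $\calM(w,-)$ match under the Shimura correspondence — this is where the specific choice of Haar measure on $N_\bfk$ in \S\ref{measu} is designed to make the comparison clean, and where one uses that the theta correspondence intertwines the Jacquet modules / principal-series structure compatibly; and (ii) the scalar normalizing factors agree. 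For (ii) the point is that the $L$-parameters $\phi_x$ and $\rho^\vee\circ\phi$ entering \eqref{intr} are the \emph{same} symplectic parameters in both settings (they are attached to $\phi$ and $w$, not to the group), and the only genuine discrepancy — the factor $\gf(\psi)^{d(x,\pi_M)}$ and the half-integral gamma factor at $\tfrac12$ — must be shown to be exactly the constant that the local Shimura correspondence introduces between the two sides (this is, morally, the statement that theta lifting multiplies intertwining operators by a controlled Weil-index constant coming from the Weil representation's own intertwining behavior).

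Having matched $R_{P_\bfk}(w,-)$ with $R_{Q_\bfk}(w,-)$ acting on corresponding constituents, I would then invoke Hypothesis \ref{lirso}: on the orthogonal side, $R_{Q_\bfk}(w,\tau_1\otimes\cdots\otimes\tau_m\otimes\sigma_0)$ acts on $\sigma\subset\Ind_{Q_\bfk}^{\SO(V)}(\tau_1\otimes\cdots\otimes\tau_m\otimes\sigma_0)$ by $\iota(\sigma)(x_w)$. Since the Shimura correspondence matches the constituents $\pi\subset\Ind_{P_\bfk}^{\Mp(W)}(\cdots)$ with the $\sigma$'s and is set up (in \S\ref{Lsc}, via Gan–Savin) precisely so that $\iota_\psi(\pi)=\iota(\sigma)$ as characters of $\pi_0(S_\phi)$, the scalar $\iota(\sigma)(x_w)$ equals $\iota_\psi(\pi)(x_w)$, which is the desired conclusion. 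A small bookkeeping point to handle is the case $n_0=0$ (no symplectic block, $\pi_0$ a character of $\{\pm1\}$), and the reduction, via the splitting $\NN_\phi=\pi_0(S_{\phi_0})\times\wl_\phi$, of the general element of $S_\phi^\natural$ to the image $x_w$ of a Weyl element — but this is purely formal once the core comparison is in place.

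\medskip
\noindent\textbf{Main obstacle.} I expect the crux to be step (ii) above: pinning down that the extra metaplectic normalizing factor $\gf(\psi)^{d(x,\pi_M)}\gamma(\tfrac12,\phi_x,\psi)\inv$ is \emph{exactly} the correction factor relating the metaplectic and orthogonal unnormalized intertwining operators under the Shimura correspondence — equivalently, that \eqref{intr} is the ``right'' normalization making $R_{P_\bfk}(w,-)=R_{Q_\bfk}(w,-)$. This requires an explicit analysis of how Weil-representation-valued intertwining operators transform, tracking Weil indices of the relevant quadratic forms (cf. the multiplicativity $\gf(\psi\circ q)=\gf(\psi\circ q_1)\gf(\psi\circ q_2)$ recorded in the Notation), and matching this against the gamma-factor identity $\gamma(s,\phi,\psi)\gamma(1-s,\phi^\vee,\psi)=1$ together with the functional equation governing the $\tfrac12$-value for symplectic $\phi_x$. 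Everything else is transport of structure along an already-constructed correspondence.
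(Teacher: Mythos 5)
Your proposal follows essentially the same route as the paper: transfer the problem to $\SO(V)$ via the theta/Shimura correspondence, show that the normalized self-intertwining operators on the two sides correspond, and then invoke Hypothesis \ref{lirso} together with $\iota_\psi(\pi)=\iota(\sigma)$. The crux you flag — that the extra factor $\gf(\psi)^{d}\gamma(\tfrac12,\cdot,\psi)\inv$ and the measure normalization are exactly the correction introduced by the theta lift — is precisely what the paper establishes in Proposition \ref{reduction}, via the explicit Gan--Ichino-type equivariant maps of \S\ref{eqmap}, the commutation relation of Proposition \ref{comm}, and the induction-in-stages compatibility of Lemma \ref{tc}.
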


We will reduce the main theorem to Proposition \ref{reduction} in \S\ref{red}, and complete a proof of the proposition in \S\ref{pf}.
\section{Local Langlands correspondence and the local intertwining relation for $\SO(V)$}\label{Llclirso}
The local Langlands correspondence for odd special orthogonal groups was established by Arthur \cite{art13} and M\oe glin-Renard \cite{mr}.
In this section, we summarize some properties of the correspondence and the local intertwining relation.

Arthur \cite{art13} and M\oe glin-Renard \cite{mr} studied representations of $\SO(V^+)$ and $\SO(V^-)$, respectively.
Their results imply LLC of Vogan type for $\SO(V)$:
\begin{thm}\label{llcso}
\begin{enumerate}[(1)]
\item
There exists a surjection
\begin{equation*}
\mathit{LLV} : \Pitemp(\SO(V^+)) \bigsqcup \Pitemp(\SO(V^-)) \lra \Phitemp(\SO_{2n+1}),
\end{equation*}
with finite fibers $\Pi_\phi = \Pi_\phi(\SO(V^+)) \sqcup \Pi_\phi(\SO(V^-)) = \mathit{LLV}\inv(\phi)$.
\item
For each $\phi \in \Phitemp(\SO_{2n+1})$, there exists a unique bijective map
\begin{equation*}
\iota : \Pi_\phi \longrightarrow \Irr(\pi_0(S_\phi))
\end{equation*}
such that
\begin{equation*}
\Pi_\phi(\SO(V^\pm))
=\Set{ \sigma \in \Pi_\phi |  \iota(\sigma)(z_\phi) = \pm 1 }.
\end{equation*}
\item
Let $V=V^+$ or $V^-$.
Let $\bfk = (k_1, \ldots, k_m)$ be a sequence of positive integers such that $k_1+\cdots+k_m \leq r$, and put $n_0=n-(k_1+\cdots+k_m)$.
Let $\phi \in \Phitemp(\SO_{2n+1})$ be of the form \eqref{phim}.
Then we have
\begin{equation*}
\Pi_\phi
=\Set{
\sigma
|
\sigma \subset \Ind_{Q_\bfk}^{\SO(V)}(\tau_1 \otimes \cdots \otimes \tau_m \otimes \sigma_0), \text{irreducible constituent},
\quad \sigma_0 \in \Pi_{\phi_0}
},
\end{equation*}
where $\tau_i$ is the representation of $\GL_{k_i}$ which corresponds to $\phi_i$, $i=1,\ldots, m$.
Moreover for any $\sigma_0 \in \Pi_{\phi_0}$, we have
\begin{align*}
\Ind_{Q_\bfk}^{\SO(V)}(\tau_1 \otimes \cdots \otimes \tau_m \otimes \sigma_0)
=\bigoplus_{\substack{\sigma \in \Pi_\phi \\ \iota(\sigma)|_{\pi_0(S_{\phi_0})} = \iota(\sigma_0)}} \sigma.
\end{align*}
\end{enumerate}
\end{thm}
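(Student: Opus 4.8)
The plan is to deduce Theorem~\ref{llcso} by combining Arthur's classification for the quasi-split form $\SO(V^+)$ \cite{art13} with M\oe glin-Renard's classification for the non-quasi-split form $\SO(V^-)$ \cite{mr}, and then performing the bookkeeping that repackages the two correspondences into one Vogan-type statement. Concretely, I would write $\mathit{LL}^{\pm} \colon \Pitemp(\SO(V^\pm)) \to \Phitemp(\SO_{2n+1})$ for the two correspondences, $\Pi_\phi(\SO(V^\pm))$ for their packets, and $\iota^{\pm}$ for the internal parametrizations, and then set $\mathit{LLV} = \mathit{LL}^+ \sqcup \mathit{LL}^-$, $\Pi_\phi = \Pi_\phi(\SO(V^+)) \sqcup \Pi_\phi(\SO(V^-))$, and $\iota = \iota^+ \sqcup \iota^-$; it then remains to derive the three assertions from the corresponding properties on each form.

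For the quasi-split form, the main local theorems of \cite{art13} (after fixing the Whittaker datum of $\SO(V^+)$ determined by $\splv$ and $\psi$) give the surjection $\mathit{LL}^+$ with finite fibers and, for each $\phi$, a bijection $\iota^+$ from $\Pi_\phi(\SO(V^+))$ onto the set of $\eta \in \Irr(\pi_0(S_\phi))$ with $\eta(z_\phi) = 1$ --- that is, the characters trivial on $Z(\hat G)^{\Gamma_F}$, since $\hat G = \Sp_{2n}(\C)$ has center $\{\pm 1\}$ and $z_\phi$ is the image of $-1$. Assertion~(3) for $V^+$ is essentially the inductive step in Arthur's construction of packets from those of Levi subgroups, governed by his local intertwining relation \cite[Theorem~2.4.1]{art13}: the normalized intertwining operators span $\End_{\SO(V^+)}\bigl(\Ind_{Q_\bfk}^{\SO(V^+)}(\tau_1 \otimes \cdots \otimes \tau_m \otimes \sigma_0)\bigr)$, this algebra is commutative, and the resulting isotypic decomposition is indexed by the characters of $\pi_0(S_\phi)$ whose restriction to $\pi_0(S_{\phi_0})$ equals $\iota^+(\sigma_0)$, via the splitting $\NN_\phi(L_\bfk, \SO(V^+)) = \pi_0(S_{\phi_0}) \times \wl_\phi(L_\bfk, \SO(V^+))$ recalled above.

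For the non-quasi-split form, M\oe glin-Renard \cite{mr} construct the packets $\Pi_\phi(\SO(V^-))$ and a bijection $\iota^-$ onto the $\eta \in \Irr(\pi_0(S_\phi))$ with $\eta(z_\phi) = -1$, the sign $-1$ recording the class of $\SO(V^-)$ in $H^1(\Gamma_F, \SO(V^+))$ under the Kottwitz pairing with $Z(\hat G)^{\Gamma_F} = \{\pm 1\}$; in particular $\Pi_\phi(\SO(V^-)) = \emptyset$ exactly when $z_\phi = 0$. The analogue of~(3) for $V^-$ follows from M\oe glin-Renard's explicit (Jacquet-module) description of these packets together with the compatibility of $\iota^-$ with parabolic induction, which reduces the $\GL$-twists to the quasi-split Levi. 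Since $\Irr(\pi_0(S_\phi))$ is the disjoint union of its $\eta(z_\phi) = +1$ and $\eta(z_\phi) = -1$ parts, assembling the two forms now yields that $\mathit{LLV}$ is surjective with finite fibers, that $\iota = \iota^+ \sqcup \iota^-$ is a bijection with the stated sign property, and --- given the fixed Whittaker datum on $\SO(V^+)$ --- that $\iota$ is unique, by the uniqueness assertions in \cite{art13} and \cite{mr}.

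The step I expect to be the main obstacle is the non-quasi-split case: one must translate M\oe glin-Renard's indexing data into characters of $\pi_0(S_\phi)$ with the correct value $-1$ on $z_\phi$, and establish the multiplicity-one parabolic-induction decomposition in~(3) for $\SO(V^-)$ without a Whittaker normalization, relying instead on the endoscopic character relations and the commutativity of the relevant intertwining algebra. Everything for $\SO(V^+)$, by contrast, is a direct transcription of Arthur's results.
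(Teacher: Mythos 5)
Your proposal is correct and matches the paper's treatment: the paper gives no independent proof of Theorem \ref{llcso}, stating it precisely as the assembly of Arthur's classification for $\SO(V^+)$ and M\oe glin--Renard's for $\SO(V^-)$ into a Vogan-type statement, which is exactly the bookkeeping you describe (including the sign of $\iota(\sigma)(z_\phi)$ distinguishing the two pure inner forms and the inductive description of packets via Levi subgroups). The difficulties you flag for the non-quasi-split form are genuine but are taken as part of the cited results \cite{art13}, \cite{mr}, not re-proved in the paper.
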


In the setting of Theorem \ref{llcso} (3), for $w \in \wl_\phi(L_\bfk, \SO(V))$, let
\begin{equation*}
R_{Q_\bfk}(w, \tau_1 \otimes \cdots \otimes \tau_m \otimes \sigma_0)
\in \End_{\SO(V)}(\Ind_{Q_\bfk}^{\SO(V)}(\tau_1 \otimes \cdots \otimes \tau_m \otimes \sigma_0))
\end{equation*}
be the normalized self-intertwining operator defined in \S\ref{io} below.
The next hypothesis is the local intertwining relation for $\SO(V)$, and it has already been proven in the case $V=V^+$ by Arthur \cite[\S2.4]{art13}.
\begin{hyp}\label{lirso}
Let $x_w \in S_\phi^\natural(L_\bfk, \SO(V))$ be the image of $w \in \wl_\phi(L_\bfk, \SO(V))$ under the natural surjection \eqref{x}.
Then, the restriction of $R_{Q_\bfk}(w, \tau_1 \otimes \cdots \otimes \tau_m \otimes \sigma_0)$ to $\sigma \subset \Ind_{Q_\bfk}^{\SO(V)}(\tau_1 \otimes \cdots \otimes \tau_m \otimes \sigma_0)$ is the scalar multiplication by $\iota(\sigma)(x_w)$.
\end{hyp}
\section{Local Shimura correspondence}\label{Lsc}
Gan-Savin \cite{gs} showed the local Shimura correspondence, which is the natural bijection between the set of isomorphism classes of irreducible genuine representations of $\Mp(W)$ and the set of isomorphism classes of irreducible representations of $\SO(V^+)$ and $\SO(V^-)$.
This is given by the local theta correspondence, and we can construct LLC for $\Mp(W)$ (Theorem \ref{llcmp}), by combining the local Shimura correspondence with LLC for $\SO(V)$ of Vogan type (Theorem \ref{llcso}).
In this section, we shall review their results.
First we recall the Weil representation for $\Mp(W) \times \Or(V)$ and the notion of the local theta correspondence.
\subsection{Weil representation}
The group $\Mp(W) \times \Or(V)$ has a natural representation $\omega_{V, W, \psi}$ depending on $\psi$, given as follows.
The tensor product $\W=V \otimes _F W$ has a natural symplectic form $\an{-,-}$ defined by
\begin{equation*}
\an{ v\otimes w, v'\otimes w' } = b_q(v, v') \cdot \an{ w, w' }_W.
\end{equation*}
Then there is a natural map
\begin{equation}\label{natsp}
\Sp(W) \times \Or(V) \lra \Sp(\W).
\end{equation}
One has the metaplectic $\C^1$-cover $\calMp(\W)$ of $\Sp(\W)$, and the additive character $\psi$ determines the Weil representation $\omega_\psi$ of $\calMp(\W)$.
Kudla \cite{spl} gives a splitting of the metaplectic cover over $\Mp(W) \times \Or(V)$, hence there exists a commutative diagram
\begin{align*}
\begin{CD}
\Mp(W) \times \Or(V)  @>>> \calMp(\W)  \\
    @VVV                                @VVV  \\
\Sp(W) \times \Or(V)  @>>> \Sp(\W),
\end{CD}
\end{align*}
where the right vertical map is given by the metaplectic $\C^1$-covering map, the left vertical map is given by the two-fold cover \eqref{2mp}, and the lower horizontal map is \eqref{natsp}.
Thus, we have a Weil representation $\omega_{V, W, \psi}$ of $\Mp(W) \times \Or(V)$.
We will later in \S\ref{mixmodel} give some realizations of the Weil representation $\omega_{V, W, \psi}$ to show the main theorem.
Here, a splitting over $\Mp(W) \times \Or(V)$ is not unique, and we choose one following \cite{spl}.
\subsection{Local theta correspondence}
In this subsection, we summarize the result of Gan-Savin \cite{gs}.
First note that the theorems in \cite{gs} had been verified only for odd residual characteristic since the Howe duality for even residue characteristic was conjecture then.
However, the Howe duality for even residue characteristic was verified by Gan-Takeda \cite{gt}, so now we have the results of \cite{gs} for arbitrary residue characteristic.

Given an irreducible representation $\sigma$ of $\Or(V)$, the maximal $\sigma$-isotypic quotient of $\omega_{V,W,\psi}$ is of the form
\begin{equation*}
\sigma \boxtimes \Theta_{V,W,\psi}(\sigma)
\end{equation*} 
for some representation $\Theta_{V,W,\psi}(\sigma)$ of $\Mp(W)$ (called the big theta lift of $\sigma$).
Then $\Theta_{V,W,\psi}(\sigma)$ is either zero or has finite length.
The maximal semisimple quotient of $\Theta_{V,W,\psi}(\sigma)$ is denoted by $\theta_{V,W,\psi}(\sigma)$ (called the small theta lift of $\sigma$).

Similarly, if $\pi$ is an irreducible genuine representation of $\Mp(W)$, then one has its big theta lift $\Theta_{W,V,\psi}(\pi)$ and its small theta lift $\theta_{W,V,\psi}(\pi)$, which are representations of $\Or(V)$.\\

By the Howe duality, each small theta lift is irreducible or zero (\cite{wal}, \cite{gt}).
Gan-Savin \cite[\S6]{gs} showed that
\begin{enumerate}
\item
for $\pi \in \Irr(\Mp(W))$, exactly one of $\theta_{W,V^+,\psi}(\pi)$ or $\theta_{W,V^-,\psi}(\pi)$ is nonzero;
\item
given $\sigma \in \Irr(\SO(V))$, with the extensions $\sigma^+$ and $\sigma^-$ to $\Or(V)$, exactly one of $\Theta_{V,W,\psi}(\sigma^+)$ or $\Theta_{V,W,\psi}(\sigma^-)$ is nonzero,
\end{enumerate}
where $\Irr(\Mp(W))$ is the set of equivalence classes of irreducible genuine representations of $\Mp(W)$, and $\sigma^\pm$ denote the extensions such that $-1 \in \Or(V)$ acts as $\pm1$, respectively.
Then they derived the following theorems (\cite[Theorem 1.1, Theorem 1.3]{gs}):
\begin{thm}
There is a bijection
\begin{equation*}
\Theta_\psi : \Irr(\Mp(W)) \llra \Irr(\SO(V^+)) \sqcup \Irr(\SO(V^-)),
\end{equation*}
given by the theta correspondence with respect to $\psi$.
\end{thm}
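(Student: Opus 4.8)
The plan is to build $\Theta_\psi$ directly out of the small theta lifts, using the two dichotomy statements recalled above to make the construction canonical, and to verify bijectivity from Howe duality together with the biadjointness of the theta correspondence. First I would construct the map. Let $\pi \in \Irr(\Mp(W))$. By the first dichotomy exactly one of $\theta_{W,V^+,\psi}(\pi)$ and $\theta_{W,V^-,\psi}(\pi)$ is nonzero; write $V_\pi$ for the corresponding quadratic space and $\tau_\pi := \theta_{W,V_\pi,\psi}(\pi)$, which is irreducible by Howe duality. Since $\dim V_\pi = 2n+1$ is odd, $-1 \notin \SO(V_\pi)$ and $\Or(V_\pi) = \SO(V_\pi) \times \{\pm1\}$ is a direct product; hence $\tau_\pi|_{\SO(V_\pi)}$ is irreducible and every element of $\Irr(\Or(V_\pi))$ is one of the two extensions $\sigma^{\pm}$ of some $\sigma \in \Irr(\SO(V_\pi))$. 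Put
\begin{equation*}
\Theta_\psi(\pi) := \tau_\pi\big|_{\SO(V_\pi)} \in \Irr(\SO(V^+)) \sqcup \Irr(\SO(V^-)).
\end{equation*}

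Next I would construct the inverse. Let $\sigma \in \Irr(\SO(V))$ with $V = V^+$ or $V^-$, and let $\sigma^{+}, \sigma^{-}$ be its two extensions to $\Or(V)$. By the second dichotomy exactly one of $\Theta_{V,W,\psi}(\sigma^{+})$ and $\Theta_{V,W,\psi}(\sigma^{-})$ is nonzero; call that extension $\sigma^{\natural}$ and set $\Theta_\psi'(\sigma) := \theta_{V,W,\psi}(\sigma^{\natural})$. This is irreducible by Howe duality, and it is genuine since $-1 \in \Mp(W)$ acts by $-1$ on $\omega_{V,W,\psi}$, so $\Theta_\psi'(\sigma) \in \Irr(\Mp(W))$.

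To see that $\Theta_\psi$ and $\Theta_\psi'$ are mutually inverse I would use the standard biadjointness of theta: for $\pi \in \Irr(\Mp(W))$ and $\rho \in \Irr(\Or(V))$ the conditions $\Hom_{\Mp(W)\times\Or(V)}(\omega_{V,W,\psi}, \pi \boxtimes \rho) \neq 0$, ``$\rho$ is a quotient of $\Theta_{W,V,\psi}(\pi)$'', and ``$\pi$ is a quotient of $\Theta_{V,W,\psi}(\rho)$'' are equivalent. Combined with Howe duality this gives: if $\theta_{W,V,\psi}(\pi) = \rho \neq 0$ then $\Theta_{V,W,\psi}(\rho) \neq 0$ and $\theta_{V,W,\psi}(\rho) = \pi$, and symmetrically. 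Starting from $\pi$: write $V = V_\pi$ and $\sigma = \Theta_\psi(\pi)$, so $\tau_\pi = \sigma^{\delta}$ for one of the two extensions. The statement above gives $\Theta_{V,W,\psi}(\sigma^{\delta}) \neq 0$, and the second dichotomy then forces $\sigma^{\natural} = \sigma^{\delta} = \tau_\pi$, whence $\Theta_\psi'(\Theta_\psi(\pi)) = \theta_{V,W,\psi}(\tau_\pi) = \pi$. Starting from $\sigma \in \Irr(\SO(V))$: with $\pi := \Theta_\psi'(\sigma) = \theta_{V,W,\psi}(\sigma^{\natural}) \neq 0$ the symmetric statement gives $\theta_{W,V,\psi}(\pi) = \sigma^{\natural} \neq 0$, so by the first dichotomy $V_\pi = V$ and $\tau_\pi = \sigma^{\natural}$, hence $\Theta_\psi(\pi) = \sigma^{\natural}|_{\SO(V)} = \sigma$. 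This proves the claimed bijection.

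The point at which the real work lies is the two dichotomy statements; the rest is formal bookkeeping with $\Or(V)$ versus $\SO(V)$ and with biadjointness. I would import the dichotomies from Gan--Savin \cite[\S6]{gs} (together with \cite{gt} to handle even residue characteristic). Their content is the conservation relation for the theta correspondence between $\Mp(W)$ and the two Witt towers of odd quadratic spaces of discriminant $1$: since $V^+$ and $V^-$ sit at the same level $2n+1$, the conservation relation forces exactly one first occurrence there, yielding the dichotomy on the $\Mp(W)$ side; the dichotomy on the $\Or(V)$ side is the analogous statement applied in the other direction. Determining \emph{which} of the two occurrences happens (the finer $\varepsilon$-factor criterion) is not needed for the bijection statement itself.
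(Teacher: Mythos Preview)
Your proposal is correct and matches the paper's treatment: the paper does not give its own proof of this theorem but simply records it as \cite[Theorem~1.1]{gs}, having first recalled the two dichotomy statements from \cite[\S6]{gs} that you use as input. Your reconstruction---defining $\Theta_\psi$ via the first dichotomy, its inverse via the second, and verifying they are mutual inverses via Howe duality and the symmetry of the condition $\Hom_{\Mp(W)\times\Or(V)}(\omega_{V,W,\psi},\pi\boxtimes\rho)\neq 0$---is exactly how the bijection is extracted from those dichotomies, and your remark that the real content lies in the dichotomies (imported from Gan--Savin, with \cite{gt} for even residue characteristic) is on point.
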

\begin{thm}\label{sw}
Suppose that $\sigma \in \Irr(\SO(V))$ and $\pi \in \Irr(\Mp(W))$ correspond under $\Theta_\psi$.
Then we have the following.
\begin{enumerate}[(1)]
\item
$\sigma$ is a discrete series representation if and only if $\pi$ is a discrete series representation.
\item
$\sigma$ is tempered if and only if $\pi$ is tempered.
Moreover, suppose that
\begin{equation*}
\sigma \subset \Ind_{Q_\bfk}^{\SO(V)}(\tau_1\otimes \cdots \otimes \tau_m \otimes \sigma_0),
\end{equation*}
where $\bfk =(k_1,\ldots,k_m)$ is a sequence such that $k_1+\cdots+k_m \leq r$, $\tau_i$'s are tempered representations of $\GL_{k_i}$, $\sigma_0$ is a tempered representation of $\SO(V_{n_0})$, and $n_0 = n-(k_1+\cdots+k_m)$.
Then
\begin{equation*}
\pi \subset \Ind_{P_\bfk}^{\Mp(W)}(\wtil{\tau_1}\otimes \cdots \otimes \wtil{\tau_m} \otimes \pi_0),
\end{equation*}
where $\pi_0 = \Theta_\psi(\sigma_0)$.
In particular, $\Theta_\psi$ gives a bijection between the (isomorphism classes of) irreducible constituents of $\Ind_{Q_\bfk}^{\SO(V)}(\tau_1\otimes \cdots \otimes \tau_m \otimes \sigma_0)$ and those of $\Ind_{P_\bfk}^{\Mp(W)}(\wtil{\tau_1}\otimes \cdots \otimes \wtil{\tau_m} \otimes \pi_0)$.
\item
If $\sigma$ is an irreducible representation of $\SO(V)$ and $\rho$ is an irreducible representation of $\GL_l$, then one has a Plancherel measure $\mu(s,\sigma \times \rho, \psi)$ associated to the parabolically induced representation $\Ind_{Q_l}^{\SO(V)}(\rho_s \otimes \sigma)$, where $\rho_s=\rho \otimes |\det|_F^s$.
If $\pi = \Theta_\psi(\sigma)$, then one has
\begin{equation*}
\mu(s, \sigma \times \rho, \psi)
=\mu(s, \pi \times \rho, \psi).
\end{equation*}
\end{enumerate}
\end{thm}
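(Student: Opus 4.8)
I would prove the three assertions in the order (3), (2), (1), first isolating the compatibility of the theta correspondence with parabolic induction, which is the engine for (2) and (1). The structural input is Kudla's filtration. Restricting $\omega_{V,W,\psi}$ to $P_k \times \Or(V)$, where $P_k$ is the maximal parabolic of $\Mp(W)$ with Levi $\ML_k \times \Mp(W_{n-k})$, one obtains a $\GL_k(F) \times \Mp(W_{n-k}) \times \Or(V)$-equivariant finite filtration of the Jacquet module of $\omega_{V,W,\psi}$ along $N_k$ whose successive quotients are assembled, by parabolic induction along the $\Or(V)$-factor together with the evident $\GL_k(F)$-actions, out of the smaller Weil representations $\omega_{V_j, W_{n-k}, \psi}$. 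Since the Jacquet functor commutes with the $\Or(V)$-coinvariants defining big theta lifts, iterating this gives, up to semisimplification, an explicit description of the Jacquet module $r_{P_\bfk}(\Theta_{V,W,\psi}(\sigma))$ in terms of the family $\Theta_{V_j, W_{n-k}, \psi}(r_{Q_{\bfk'}}(\sigma))$ twisted by $\GL$-data of the form $\wtil{\tau}$. I would extract two consequences: (i) if $\sigma \hookrightarrow \Ind_{Q_\bfk}^{\SO(V)}(\tau_1 \otimes \cdots \otimes \tau_m \otimes \sigma_0)$ then $\Theta_{V,W,\psi}(\sigma)$ is a quotient of $\Ind_{P_\bfk}^{\Mp(W)}(\wtil{\tau_1} \otimes \cdots \otimes \wtil{\tau_m} \otimes \Theta_{V_{n_0}, W_{n_0}, \psi}(\sigma_0))$, and symmetrically with $\SO$ and $\Mp$ exchanged; combined with the Howe duality and the bijectivity of $\Theta_\psi$ this yields the ``irreducible constituent'' form of (2) and of Theorem \ref{llcmp}(3); (ii) the exponents appearing in $r_{P_\bfk}(\Theta_{V,W,\psi}(\sigma))$ and in $r_{Q_\bfk}(\sigma)$ agree up to a fixed, $(\dim V - \dim W)$-dependent shift that in the present almost equal rank situation ($\dim V = \dim W + 1$) preserves both the temperedness condition (closed negative cone) and the square-integrability condition (open cone).

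For (3), fix $\sigma \in \Irr(\SO(V))$, $\rho \in \Irr(\GL_l)$, and $\pi = \Theta_\psi(\sigma)$. The Harish-Chandra $\mu$-function is pinned down on each side by $M_{Q_l}(-s) \circ M_{Q_l}(s) = \mu(s, \sigma \times \rho, \psi)^{-1} \cdot \mathrm{id}$ for the unnormalized standard intertwining operator on $\Ind_{Q_l}^{\SO(V)}(\rho_s \otimes \sigma)$, and likewise for $\Ind_{P_l}^{\Mp(W)}(\wtil{\rho_s} \otimes \pi)$. Realizing $\omega_{V,W,\psi}$ in a mixed model adapted simultaneously to $Q_l$ and $P_l$, one checks by an unfolding that the theta map intertwines $M_{Q_l}(s)$ with $M_{P_l}(s)$ up to a scalar depending only on $s$, $\rho$, $\psi$ (a ratio of Weil indices and Godement--Jacquet/Tate gamma factors of $\rho$) that is the same on both sides, so these scalars cancel in $M(-s) \circ M(s)$ and $\mu(s, \sigma \times \rho, \psi) = \mu(s, \pi \times \rho, \psi)$ follows; a see-saw/doubling argument expressing both $\mu$-functions through the same doubling gamma factor would be an alternative.

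For (2): if $\sigma$ is tempered, write $\sigma \hookrightarrow \Ind_{Q_\bfk}^{\SO(V)}(\delta_1 \otimes \cdots \otimes \delta_m \otimes \sigma_0)$ with $\delta_i$ discrete series of $\GL_{k_i}$ and $\sigma_0$ discrete series of $\SO(V_{n_0})$; by (i), $\pi$ is a constituent of $\Ind_{P_\bfk}^{\Mp(W)}(\wtil{\delta_1} \otimes \cdots \otimes \wtil{\delta_m} \otimes \Theta_\psi(\sigma_0))$, and this induced representation is tempered provided $\Theta_\psi(\sigma_0)$ is discrete series, which is (1) at lower rank; the converse is symmetric since $\tau \mapsto \wtil{\tau}$ preserves discrete series. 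For (1): a discrete series $\sigma$ is tempered, hence $\pi$ is tempered by (2) and $\pi \hookrightarrow \Ind_{P_\bfk}^{\Mp(W)}(\wtil{\delta_1} \otimes \cdots \otimes \wtil{\delta_m} \otimes \pi_0)$ with $\pi_0$ discrete series of $\Mp(W_{n_0})$; if $\bfk \neq \emptyset$ the reverse form of (i) makes $\sigma$ a constituent of $\Ind_{Q_\bfk}^{\SO(V)}(\delta_1 \otimes \cdots \otimes \delta_m \otimes \Theta_\psi(\pi_0))$, and then Casselman's square-integrability criterion fails for $\sigma$ (the real part of the central exponent along each $\GL_{k_i}$-factor of such a Jacquet quotient is $0$, hence on the boundary of the cone), a contradiction; so $\bfk = \emptyset$ and $\pi$ is discrete series. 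One organizes (1) and (2) as a single induction on $n$ with the trivial base case $n = 0$, part (ii) supplying the exponent comparison behind both applications of Casselman's criterion.

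The main obstacle is (3), and within it the bookkeeping of normalizations: the theta map respects only the unnormalized intertwining operators, and only up to $\rho$- and $\psi$-dependent gamma factors and Weil indices that must be determined exactly — this is the same type of mixed-model computation that later underlies the normalized operators \eqref{intr} and \S\ref{io}. The combinatorics of Kudla's filtration is technical but standard; its one delicate point is propagating the non-vanishing of theta lifts through the successive quotients, for which the Howe duality and the conservation relation (Kudla--Rallis, Sun--Zhu) are used.
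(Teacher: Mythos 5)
This paper does not prove Theorem \ref{sw}: it is stated as a quotation of Gan--Savin \cite[Theorems 1.1 and 1.3]{gs}, with the Howe duality at even residual characteristic supplied by \cite{gt}, so there is no internal proof to measure your argument against; the relevant comparison is with \cite{gs} and with the Plancherel-measure material of \cite[Appendix B]{gifd} and \cite[\S 8, Appendix A.7]{gigp} that this paper cites. With that understood, your sketch follows the same route as that literature: Kudla's filtration and the induction principle for compatibility with parabolic induction; Casselman's square-integrability criterion, together with the fact that constituents of representations induced from discrete series of proper Levi subgroups are tempered but never square-integrable, for (1) and (2); and a mixed-model comparison of unnormalized intertwining operators (or alternatively a doubling/see-saw identity of gamma factors) for (3) --- the latter being exactly the kind of computation that reappears in this paper as Proposition \ref{comm} for the normalized operators. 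The points you single out as delicate (pinning down the Weil-index and gamma-factor constants in (3); propagating nonvanishing through the successive quotients of the filtration) are indeed where the substance lies.

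One place where the sketch is genuinely thin rather than merely compressed: the correspondence here is between $\Mp(W)$ and $\Or(V)=\SO(V)\times\{\pm1\}$, and for $\sigma\in\Irr(\SO(V))$ exactly one of the two extensions $\sigma^{\pm}$ to $\Or(V)$ has nonzero big theta lift, while in the other direction the lift of $\pi\in\Irr(\Mp(W))$ is nonzero on exactly one of the towers $V^{+}$, $V^{-}$. Your induction principle is phrased purely for $\SO(V)$, and in the step for (1) you pass from $\pi\subset\Ind_{P_\bfk}^{\Mp(W)}(\wtil{\delta_1}\otimes\cdots\otimes\pi_0)$ to the claim that $\sigma$ is a constituent of $\Ind_{Q_\bfk}^{\SO(V)}(\delta_1\otimes\cdots\otimes\Theta_\psi(\pi_0))$; this implicitly asserts that $\pi$ and $\pi_0$ lift to the same orthogonal tower (and that the choices of extensions to $\Or(V)$ and $\Or(V_{n_0})$ are compatible), which is not a formal consequence of Kudla's filtration and is part of the epsilon-dichotomy bookkeeping carried out in \cite{gs}. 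Making that matching precise is what your appeal to the Howe duality and the conservation relation has to deliver; as a strategy the proposal is sound, but as written this step is asserted rather than proved.
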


See \cite[Appendix B]{gifd} and \cite[Appendix A.7]{gigp} for detail of Plancherel measures.

By combining Theorem \ref{sw} with Theorem \ref{llcso} and \cite{han}, we obtain Theorem \ref{llcmp}.
\section{Intertwining operators}\label{Io}
In this section, we shall define the normalized self-intertwining operators of $\SO(V)$ (following \cite[\S2.3]{art13}) and those of $\Mp(W)$, which are used in Theorem \ref{lirmp} and Hypothesis \ref{lirso} above.
The definition of the normalized self-intertwining operators is very subtle because one has to choose the
following data appropriately:
\begin{itemize}
\item
representatives of a Weyl group element $w$;
\item
Haar measures on the unipotent radicals to define the unnormalized intertwining operators;
\item
normalizing factors $r_P(w, \pi_{M, \bfs})$ and $r_Q(w, \sigma_{L, \bfs})$;
\item
an intertwining isomorphism $\mathcal{A}_w$.
\end{itemize}

Let $\bfk = (k_1, \ldots, k_m)$ be a sequence of positive integers such that $k_1+ \cdots + k_m \leq r$.
Put $k_0=0$, $k=k_1+\cdots +k_m$, $n_0 = n-k$, and
\begin{align*}
 P&=P_{\bfk},&  M&=M_{\bfk},&  N&=N_{\bfk},& &\quad&
 Q&=Q_{\bfk},&  L&=L_{\bfk},&  U&=U_{\bfk}.&
\end{align*}
\subsection{Representatives of a Weyl group element}\label{weyl}
Let $w \in \wl(\hat{M}, \Sp_{2n}(\C))$ be a Weyl group element.
We shall identify $w$ with elements in $\wl(M, \Mp(W))$, $\wl(L, \SO(V))$ in a standard way, and take representatives $\wtil{w}_P \in \Mp(W)$ and $\wtil{w}_Q \in \SO(V)$ following Langlands-Shelstad \cite{ls} and Gan-Li \cite{ganli}.
In this subsection, we review the procedure.
See \cite[\S2.1]{ls}, \cite[\S2.3]{art13}, or \cite[Definition 4.1]{ganli} for detail.\\

First, we realize the relative Weyl group $\wl(\hat{M}, \Sp_{2n}(\C))$ in $\wlbc{m}$, and identify it with relative Weyl groups $\wl(\mpsp{M}, \Sp(W))$ and $\wl(L, \SO(V))$.
We can do this in a canonical way, because the Levi subgroups $\hat{M}$, $\mpsp{M}$, and $L$ have a form
\begin{equation*}
\GL_{k_1} \times \cdots \times \GL_{k_m} \times G_-
\end{equation*}
over $\C$ or $F$, where $G_-$ is a semi-simple algebraic group.

Next, we take standard splittings $\splw$ of $\Sp(W)$ and $\splv$ of $\SO(V^+)$ by
\begin{align*}
&\splw = (B_W, T_W , \{X_{\alpha_i}\}_{i=1,\dots,n}),&
&\splv = (B_V, T_V, \{X_{\beta_i}\}_{i=1,\dots,n}),&
\end{align*}
where
\begin{itemize}
\item
$B_W$ and $B_V$ are respectively the Borel subgroups stabilizing the $F$-flags
\begin{align*}
&Fy_1  \subset Fy_1 + Fy_2 \subset \cdots \subset Fy_1 + \cdots +Fy_n,&
&Fx_1  \subset Fx_1 + Fx_2 \subset \cdots \subset Fx_1 + \cdots +Fx_n;&
\end{align*}
\item
$T_W$ and $T_V$ are respectively their maximal tori which are diagonalized by the bases
\begin{align*}
&\set{y_1,\dots,y_n,y_1^*,\dots,y_n^*},&
&\set{x_1,\dots,x_n,x_0,x_1^*,\dots,x_n^*};&
\end{align*}
\item
$X_{\alpha_i}$ and $X_{\beta_i}$ are simple root vectors given as follows:
\begin{align*}
X_{\alpha_i} : y_j &\mapsto \begin{cases}  y_i & j=i+1, \\
                                                        0   &  \text{otherwise},
                                    \end{cases}&
                y^*_j&\mapsto \begin{cases} -y^*_{i+1} & j=i, \\
                                                          0           &\text{otherwise},
                                       \end{cases}& \text{for }1\leq i\leq n-1,\\
X_{\alpha_n} : y_j &\mapsto 0, &
                y^*_j&\mapsto \begin{cases} y_n & j=n, \\
                                                      0     &\text{otherwise},
                                     \end{cases}& \\
X_{\beta_i} : x_j &\mapsto \begin{cases}  x_i & j=i+1, \\
                                                      0   &\text{otherwise},
                                   \end{cases}&
              x^*_j&\mapsto \begin{cases} -x^*_{i+1} & j=i, \\
                                                         0         &\text{otherwise},
                                     \end{cases}& \text{for }1\leq i\leq n-1,\\
X_{\beta_n} : x_j &\mapsto   \begin{cases} 2x_n & j=0, \\
                                                       0     &\text{otherwise},
                                      \end{cases}&
                x^*_j&\mapsto \begin{cases} -x_0 & j=n, \\
                                                         0           &\text{otherwise}.
                                       \end{cases}&
\end{align*}
\end{itemize}
Then $M$ and $P$ (resp. $L$ and $Q$) are standard, in the sense that they contain $\wtil{T_W}$ and $\wtil{B_W}$ (resp. $T_V$ and $B_V$) respectively.
Let $\Phi(T_W,\Sp(W))$ and $\Delta(B_W)=\{\alpha_1,\ldots,\alpha_n\}$ denote the set of roots and the set of simple positive roots with the indices relative to the basis.
Then we can see that the simple root vectors $X_{\alpha_i}$ are indeed corresponding to $\alpha_i$.
Let $X_{-\alpha_i}$ be the root vector for $-\alpha_i$ such that the Lie bracket $[X_{\alpha_i}, X_{-\alpha_i}]$ is the coroot for $\alpha_i$.
Let us take $\Phi(T_V,\SO(V^+))$, $\Delta(B_V)=\{\beta_1,\ldots,\beta_n \}$, and $X_{-\beta_i}$ similarly.\\

Now let us take representatives $\wtil{w}_P$ and $\wtil{w}_Q$ of $w$.
First assume that $V=V^+$.
Let $w_T$ and $w_T'$ denote the representatives of $w$ in the Weyl groups $\wl(T_W,\Sp(W))$ and $\wl(T_V, \SO(V^+))$ that stabilize the simple positive roots inside $\mpsp{M}$ and $L$ respectively.
We shall write $w_\lambda$ for the reflection corresponding to a root $\lambda$.
We then have Langlands-Shelstad's representatives
\begin{align*}
\wtil{w}_P &= \wtil{w}_{\alpha_{(1)}} \cdots \wtil{w}_{\alpha_{(\ell)}},&
\wtil{w}_Q &= \wtil{w}_{\beta_{(1)}} \cdots \wtil{w}_{\beta_{(\ell)}},
\end{align*}
where $w_T = w_{\alpha_{(1)}}\cdots w_{\alpha_{(\ell)}}$ and $w_T'=w_{\beta_{(1)}}\cdots w_{\beta_{(\ell)}}$ are reduced decompositions of $w_T$ and $w_T'$ in $\wl(T_W,\Sp(W))$ and $\wl(T_V, \SO(V^+))$ respectively, and
\begin{align*}
&\wtil{w}_{\alpha} = \left( \exp(X_\alpha)\exp(-X_{-\alpha})\exp(X_\alpha) , 1\right),&
&\wtil{w}_{\beta} = \exp(X_\beta)\exp(-X_{-\beta})\exp(X_\beta),&
\end{align*}
for any $\alpha \in \Delta(B_W)$, $\beta \in \Delta(B_V)$.

In case $V=V^-$, the representative $\wtil{w}_Q \in \SO(V^-)$ is defined to be the corresponding element via the canonical pure inner twist $(\xi, z) : \SO(V^+) \to \SO(V^-)$.
The following lemma is obvious but important.
\begin{lem}
Let $w=w_1 \cdots w_l$ be a reduced decomposition of $w$ as an element of the relative Weyl group $\wl(\hat{M},\Sp_{2n}(\C))$.
Then we have
\begin{align*}
\wtil{w}_P&=\wtil{w_1}_P \cdots \wtil{w_l}_P&
 &\text{and}&
\wtil{w}_Q&=\wtil{w_1}_Q \cdots \wtil{w_l}_Q.
\end{align*}
\end{lem}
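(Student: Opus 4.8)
\emph{Proof strategy.} My plan is to reduce the whole statement to one standard property of Langlands--Shelstad representatives, carried over to the metaplectic cover by Gan--Li. Write $\Phi = \Phi(T_W,\Sp(W))$, $\Phi^+$ for the positive roots attached to $B_W$, and $\Phi_{\mpsp M}\subset\Phi$ for the roots of $\mpsp M$. For $u\in\wl(T_W,\Sp(W))$ and a reduced expression $u = w_{\gamma_1}\cdots w_{\gamma_\ell}$ as a product of simple reflections, set $n_P(u) = \wtil{w}_{\gamma_1}\cdots\wtil{w}_{\gamma_\ell}\in\Mp(W)$; by \cite[\S2.1]{ls} and \cite[Definition 4.1]{ganli} this is independent of the reduced expression, and the analogous map $n_Q$ on $\wl(T_V,\SO(V^+))$ is defined the same way. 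Concatenating reduced words then gives for free the multiplicativity $n_P(uu') = n_P(u)n_P(u')$ whenever $\ell(uu') = \ell(u)+\ell(u')$, and likewise for $n_Q$. Since by construction $\wtil{w}_P = n_P(w_T)$ and $\wtil{w}_Q = n_Q(w_T')$, where $w_T$ and $w_T'$ are the lifts of $w$ to $\wl(T_W,\Sp(W))$ and $\wl(T_V,\SO(V^+))$ stabilizing the simple roots inside $\mpsp M$ and $L$, i.e.\ the minimal-length representatives of $w$, and since the same reduced decomposition of $w$ serves for all three identified relative Weyl groups, the lemma for $V=V^+$ reduces to the claim
\begin{align*}
 w_T = (w_1)_T\cdots(w_l)_T, \qquad \ell(w_T) = \sum_{i=1}^{l}\ell((w_i)_T),
\end{align*}
together with the same statement for the primed data. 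Granting this, concatenating reduced words for the $(w_i)_T$ produces a reduced word for $w_T$, and applying $n_P$ gives $\wtil{w}_P = n_P(w_T) = \prod_i n_P((w_i)_T) = \prod_i\wtil{w_i}_P$, and similarly $\wtil{w}_Q = \prod_i\wtil{w_i}_Q$.

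\emph{The combinatorial core.} To prove the displayed claim I would argue with inversion sets. If $u\in\wl(T_W,\Sp(W))$ is the minimal-length representative of its image $\bar u$ in the relative Weyl group, then $u$ permutes $\Phi_{\mpsp M}$ and fixes $\Phi_{\mpsp M}^+$ setwise, so the inversion set $\mathrm{Inv}(u) = \{\lambda\in\Phi^+ : u\lambda < 0\}$ is contained in $\Phi^+\setminus\Phi_{\mpsp M}$; writing $\bar\lambda$ for the restriction of a root to $A_{\mpsp M}$ and $\Sigma^+$ for the relative positive roots, one has $\lambda\in\mathrm{Inv}(u)$ iff $\bar\lambda\in\Sigma^+$ and $\bar u\bar\lambda\in-\Sigma^+$. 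Thus $\mathrm{Inv}(u)$ is the full preimage in $\Phi^+\setminus\Phi_{\mpsp M}$ of the relative inversion set of $\bar u$, and $\ell(u) = \sum_{\bar\gamma} m(\bar\gamma)$, the sum over $\bar\gamma\in\Sigma^+$ with $\bar u\bar\gamma<0$, where $m(\bar\gamma) = \#\{\lambda\in\Phi : \bar\lambda = \bar\gamma\}$ is constant along relative-Weyl orbits in $\Sigma^+$. Since $w = w_1\cdots w_l$ is reduced in the relative Weyl group, its relative inversion set is the disjoint union of the successive translates of those of the $w_i$; feeding this into the formula for $\ell(\cdot)$ and using the orbit-invariance of $m$ yields $\ell(w_T) = \sum_i\ell((w_i)_T)$. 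Finally $(w_1)_T\cdots(w_l)_T$ lies in the coset of lifts of $w$, has length $\sum_i\ell((w_i)_T) = \ell(w_T)$, and stabilizes $\Phi_{\mpsp M}^+$, hence is the minimal-length lift of $w$, namely $w_T$; the same argument over $\SO(V^+)$ handles $w_T'$. This comparison of relative and absolute inversion sets is the only non-formal point, and is where I would concentrate the verification --- in particular the fact that every root restricting to a given relative positive root is itself positive, which is what makes $m$ invariant on $\Sigma^+$.

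\emph{The non-split orthogonal group.} When $V = V^-$, the representative $\wtil{w}_Q\in\SO(V^-)$ is by definition the transport of the $V^+$-representative along the canonical pure inner twist $(\xi,z)\colon\SO(V^+)\to\SO(V^-)$. Since $\xi$ carries the splitting $\splv$, and hence each root vector datum and each $\wtil{w}_\beta$, to the corresponding data for $V^-$, it transports every Langlands--Shelstad representative term by term along the reduced decomposition; therefore the identity $\wtil{w}_Q = \wtil{w_1}_Q\cdots\wtil{w_l}_Q$ over $\SO(V^-)$ is immediate from the one over $\SO(V^+)$ established above. This completes the plan.
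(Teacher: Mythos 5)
The paper gives no argument for this lemma (it is declared ``obvious''), so the only question is whether your argument stands on its own. Your reduction is the right skeleton: independence of the Langlands--Shelstad/Gan--Li representative from the chosen reduced word, plus the identity $w_T=(w_1)_T\cdots(w_l)_T$ with $\ell(w_T)=\sum_i\ell((w_i)_T)$, would indeed give both equalities at once. The identification $(w_1)_T\cdots(w_l)_T=w_T$ is fine (uniqueness of the lift preserving the positive roots of $\mpsp{M}$). But the combinatorial core --- the length additivity --- is false in the generality you claim, and the inversion-set argument is where it breaks. The disjoint-union property of inversion sets along a reduced word is a statement about a Coxeter group acting on \emph{its own} root system, whereas $\wl(\hat{M},\Sp_{2n}(\C))$ is in general a \emph{proper} reflection subgroup of the Weyl group of the full restricted root system $\Sigma$: it contains the reflection in $f_i\pm f_j$ only when $k_i=k_j$. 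For its reduced words the inversion count taken in all of $\Sigma^+$ need not be additive. Concretely, take $n=5$, $\bfk=(2,1,2)$, $n_0=0$. Then $\wl(\hat{M},\Sp_{10}(\C))$ is generated by the interchange $c$ of the two $\GL_2$-blocks and the sign changes; $c$ and the sign change $\epsilon$ in the $\GL_1$-block are commuting simple reflections, so $w=c\epsilon=c\cdot\epsilon$ is a reduced decomposition. The relative inversion sets in $\Sigma^+$ have sizes $3$, $3$ and $4$ (the roots restricting to $f_1-f_2$, $f_2-f_3$ are inverted by one factor and restored by the other), and correspondingly the distinguished lifts satisfy $\ell(c_T)=8$, $\ell(\epsilon_T)=5$, but $\ell(w_T)=9\neq 13$. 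So the concatenated reduced words are not reduced, the multiplicativity $n_P(w_T)=n_P(c_T)n_P(\epsilon_T)$ cannot be invoked, and your proof stops there; what remains is a $2$-torsion torus discrepancy, a product of $\gamma^\vee(-1)$ over the cancelled roots, which is not obviously trivial and which your route would have to compute and kill.

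Your argument does work whenever the given relative reduced decomposition is also ``reduced'' for the full restricted-root arrangement, i.e.\ no restricted-root hyperplane is crossed twice: for instance when all the $w_i$ lie in the sign-change part $(\Z/2\Z)^m$ (there one checks additivity directly, as in your computation), or when the blocks permuted by the $\mathfrak{S}_m$-part of $w$ are consecutive, so that this part sits inside a standard Young-type subgroup of $\wl(T_W,\Sp(W))$. So to repair the proof you must either add and verify this compatibility (e.g.\ reduce to the case where equal-size blocks, or equal $\phi_i$'s, are adjacent --- which is what is actually needed in the proof of Proposition \ref{reduction} --- possibly after a preliminary conjugation), or else handle the general case by an explicit computation of the discrepancy cocycle rather than by reduced-word bookkeeping. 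The final paragraph on $V=V^-$ is fine once the $V=V^+$ case is settled, since transport along the pure inner twist is a group homomorphism.
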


The representatives can be given more explicitly in the case $m=1$. See Proposition \ref{LS} below.
\subsection{Haar measures on the unipotent radicals}\label{measu}
In this subsection, we shall choose Haar measures on $N$ and $U$.
We first define Haar measures $du$ on $U$ for $V=V^+$, and $d'n$ on $N$ with respect to the splittings $\splv$ and $\splw$, following \cite[\S2.3]{art13} or \cite[\S2.2]{kmsw}.
Here, $d'n$ is a Haar measure when we consider $N$ as the unipotent radical of a parabolic subgroup $\mpsp{P}$ of $\Sp(W)$.
On the unipotent radical $N$ of the parabolic subgroup $P$ of $\Mp(W)$, we take a Haar measure $dn = |2|_F^{-k/2} d'n$.

Since the splittings are given explicitly, one can describe these measures explicitly.
We will give an explicit definition of the measures in the case $m=1$, i.e., $\bfk=(k)$, in \S\ref{measm} below.
The measures for $m=1$ give us the following descriptions of $du$ and $dn$.

For $1 \leq i\leq m$, put $n_i = n-(k_0+\cdots+k_{i-1})$.
As in \S\ref{symp} and \ref{orth}, let $N^{(i)}$ and $U^{(i)}$ be the unipotent radicals of the maximal parabolic subgroups of $\Sp(W_{n_i})$ and $\SO(V_{n_i})$ stabilizing
\begin{align*}
 &\myspan_F(y_{k_0+\cdots+k_{i-1}+1}, \ldots, y_{k_0+\cdots+k_i}),&
 &\myspan_F(x_{k_0+\cdots+k_{i-1}+1}, \ldots, x_{k_0+\cdots+k_i}),&
\end{align*}
respectively.
If we take Haar measures on each $N^{(i)}$ and $U^{(i)}$ as we will do on $N_k$ and $U_k$ in \S \ref{measm} below, then the Haar measures $dn$ on $N$ and $du$ on $U$ are the measures defined via the following homeomorphisms
\begin{align*}
 N^{(1)} \times \cdots \times N^{(m)} &\lra N,& (n_1, \ldots, n_m)&\mapsto n_1 \cdots n_m,\\
 U^{(1)} \times \cdots \times U^{(m)} &\lra U,& (u_1, \ldots, u_m)&\mapsto u_1 \cdots u_m.
\end{align*}
In case $V=V^-$, we shall define the Haar measure $du$ on $U$ by using the above homeomorphism.
\subsection{Intertwining operators}\label{io}
Now we shall define intertwining operators.
Let $\tau_i$ be irreducible tempered representations of $\GL_{k_i}$ on a vector space $\scrV_{\tau_i}$, for $i=1, \ldots, m$.
For any $\bfs=(s_1, \ldots, s_m) \in \C^m$, we realize the representation $\tau_{i, s_i}=\tau_i \otimes |\det|_F^{s_i}$ on $\scrV_{\tau_i}$ by setting $\tau_{i, s_i}(a)v=|\det a|_F^{s_i} \tau_i(a)v$ for $v \in \scrV_{\tau_i}$ and $a \in \GL_{k_i}$.
Let $\pi_0$ be an irreducible genuine tempered representation of $\Mp(W_{n_0})$ on $\scrV_{\pi_0}$, and $\sigma_0$ an irreducible tempered representation of $\SO(V_{n_0})$ on $\scrV_{\sigma_0}$, such that $\pi_0$ and $\sigma_0$ correspond under the bijection $\Theta_\psi$.
Put $\pi_{M,\bfs} = \wtil{\tau}_{1,s_1}\otimes\cdots\otimes\wtil{\tau}_{m,s_m} \otimes \pi_0$, and $\sigma_{L,\bfs} = \tau_{1,s_1}\otimes\cdots\otimes\tau_{m,s_m} \otimes \sigma_0$.
Especially, we shall write $\pi_M=\pi_{M,0}$ and $\sigma_L=\sigma_{L,0}$.

The normalized parabolically induced representation $\Ind_P^{\Mp(W)}(\pi_{M,\bfs})$ is realized on the space of $\scrV_{\tau_1} \otimes \cdots \otimes \scrV_{\tau_m} \otimes \scrV_{\pi_0}$-valued smooth functions $\scrF_\bfs$ on $\Mp(W)$ such that
\begin{equation*}
\scrF_\bfs(mng)
=\delta_P(m)^{\frac{1}{2}} \pi_{M, \bfs}(m) \scrF_\bfs(g),
\end{equation*}
for any $m \in M$, $n \in N$, and $g \in \Mp(W)$, where $\delta_P$ is the modulus function.
For $w \in \wl(\hat{M}, \Sp_{2n}(\C))$, we shall define the unnormalized intertwining operator
\begin{equation*}
\calM\left(\wtil{w}_P, \pi_{M,\bfs}\right)
: \Ind_P^{\Mp(W)}(\pi_{M,\bfs}) \to \Ind_P^{\Mp(W)}(w\pi_{M,\bfs}),
\end{equation*}
by the meromorphic continuation of the integral
\begin{equation*}
\calM\left(\wtil{w}_P,\pi_{M,\bfs}\right) \scrF_\bfs(g)
= \int_{(\prescript{w}{}{N} \cap N) \backslash N} \scrF_\bfs((\wtil{w}_P)\inv ng)dn,
\end{equation*}
where $\prescript{w}{}{N}=\wtil{w}_P N \wtil{w}_P\inv$, and $w\pi_{M,\bfs}$ is the representation of $M$ on $\scrV_{\tau_1} \otimes \cdots \otimes \scrV_{\tau_m} \otimes \scrV_{\pi_0}$ given by
\begin{equation*}
w\pi_{M,\bfs}(m)
= \pi_{M,\bfs}((\wtil{w}_P)\inv m \wtil{w}_P),
\end{equation*}
for $m \in M$.
The integral above converges absolutely on some open set of $\C^m$ in $\bfs$, and has meromorphic continuation to $\bfs \in \C^m$.
The operator is well-defined for $\bfs \in \C^m$ except finite poles modulo $(2\pi i/\log q_F)\Z^m$.
Similarly, we can define the unnormalized intertwining operator $\calM(\wtil{w}_Q, \sigma_{L,\bfs})$ from $\Ind_Q^{\SO(V)}(\sigma_{L,\bfs})$ to $\Ind_Q^{\SO(V)}(w\sigma_{L,\bfs})$.\\

Before stating the definition of the normalized self-intertwining operators, we need to normalize the operators $\calM\left(\wtil{w}_P, \pi_{M,\bfs}\right)$ and $\calM\left(\wtil{w}_Q, \sigma_{L,\bfs}\right)$ to be holomorphic at $\bfs=0$.
Put $P^w = (\wtil{w}_P)\inv P \wtil{w}_P$, $Q^w = (\wtil{w}_Q)\inv Q \wtil{w}_Q$, and let $\phi_1, \ldots, \phi_m, \phi_0$ be the $L$-parameters corresponding to $\tau_1, \ldots, \tau_m, \sigma_0$ via LLC.
Since $\sigma_0 = \Theta_\psi(\pi_0)$, the $L$-parameter for $\pi_0$ is also $\phi_0$.
As in \eqref{phim}, put
\begin{equation*}
\phi
= \phi_1 \oplus \cdots \oplus \phi_m \oplus \phi_0 \oplus \phi_m^\vee \oplus \cdots \oplus \phi_1^\vee,
\end{equation*}
so that $\phi \in \Phitemp(\SO_{2n+1}) = \Phitemp(\Mp_{2n})$ and $\myim(\phi) \subset \hat{M}$.
By the following equation, we define the twist $\phi_\bfs$ of $\phi$ by $\bfs$:
\begin{equation*}
\phi_\bfs
= (\phi_1 \otimes |-|^{s_1}) \oplus \cdots \oplus (\phi_m \otimes |-|^{s_m})
         \oplus \phi_0
          \oplus (\phi_m^\vee \otimes |-|^{-s_m}) \oplus \cdots \oplus (\phi_1^\vee \otimes |-|^{-s_1}).
\end{equation*}
Then we shall define a normalizing factor
\begin{equation*}
r_{P^w|P}(\bfs,\phi, \psi)
= \gamma(0, \rho_{P^w|P}^\vee \circ \phi_{\bfs}, \psi)\inv,
\end{equation*}
where $\rho_{P^w|P}$ denotes the representation of $\hat{M}$ defined in \cite[pp.80-81]{art13}.
Similarly, define a normalizing factor $r_{Q^w|Q}(\bfs,\phi, \psi) = \gamma(0, \rho_{Q^w|Q}^\vee \circ \phi_{\bfs}, \psi)\inv$.
The realization of $\wl(\hat{M}, \Sp_{2n}(\C))$ as a subgroup of $\wlbc{m}$ gives us an expression
\begin{equation*}
w
= \sigma_w \ltimes (d_i)_{i=1}^m.
\end{equation*}
Let $y : \Z/2\Z \to \{0,1\}$ be a map such that $y(2\Z) = 0$ and $y(1+2\Z) = 1$.
We then define a representation $y(w, \phi_\bfs)$ of $\WD_F$ and a complex number $y(w, \bfs)$ by
\begin{align*}
&y(w, \phi_\bfs) = \bigoplus_{i=1}^m y(d_i) \phi_i\otimes|-|^{s_i},&
&y(w, \bfs) = \sum_{i=1}^m y(d_i) s_i.&
\end{align*}
Let us define normalized intertwining operators
\begin{align*}
\calR_P(w, \pi_{M, \bfs}, \psi)
&= \gf(\psi)^{\dim y(w,\phi)} |2|_F^{2y(w,\bfs)} \gamma(\tfrac{1}{2}, y(w, \phi_\bfs), \psi)\inv
           r_{P^w|P}(\bfs,\phi, \psi)\inv \calM\left(\wtil{w}_P,\pi_{M,\bfs}\right), \\
\calR_Q(w, \sigma_{L, \bfs})
&= \epsilon(V)^{\dim y(w, \phi)}
          r_{Q^w|Q}(\bfs,\phi, \psi)\inv \calM\left(\wtil{w}_Q,\sigma_{L,\bfs}\right).
\end{align*}
It is known that $\calR_Q(w, \sigma_{L, \bfs})$ is independent of the choice of the additive character $\psi$. (See \cite[p.83]{art13}.)
We can see that the intertwining operators can be defined at $\bfs=0$:
\begin{lem}
The normalized intertwining operators $\calR_P(w, \pi_{M, \bfs}, \psi)$, $\calR_Q(w, \sigma_{L, \bfs})$ are holomorphic at $\bfs=0$.
\end{lem}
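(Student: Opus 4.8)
The plan is to reduce the holomorphy of $\calR_P(w,\pi_{M,\bfs},\psi)$ at $\bfs=0$ to two known facts: the holomorphy of $\calR_Q(w,\sigma_{L,\bfs})$ at $\bfs=0$ (which is part of Arthur's construction \cite[\S2.3]{art13} and is built into Hypothesis \ref{lirso}), and the compatibility of the unnormalized intertwining operators $\calM(\wtil{w}_P,\pi_{M,\bfs})$ and $\calM(\wtil{w}_Q,\sigma_{L,\bfs})$ under the local theta correspondence, coming from the equality of Plancherel measures in Theorem \ref{sw}(3). First I would reduce to the case $m=1$, i.e. $\bfk=(k)$ and $w$ the nontrivial element, using the multiplicativity of Langlands--Shelstad representatives (the Lemma after their definition) together with the factorizations of $N$, $U$ and of $\calM$ into rank-one pieces; the normalizing factors $r_{P^w|P}$, the $\gamma$-factors $\gamma(\tfrac12,y(w,\phi_\bfs),\psi)$, and the combinatorial data $\dim y(w,\phi)$, $y(w,\bfs)$ are all multiplicative along a reduced decomposition, so the general case follows once the maximal-parabolic case is settled.

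In the rank-one case, $M_{k}\cong \ML_k\times_{\mu_2}\Mp(W_{n_0})$ and $\pi_{M,\bfs}=\wtil{\tau}_{s}\otimes\pi_0$, while $L_k\cong\GL_k\times\SO(V_{n_0})$ and $\sigma_{L,\bfs}=\tau_s\otimes\sigma_0$ with $\sigma_0=\Theta_\psi(\pi_0)$. Here the self-intertwining operator $\calM(\wtil{w}_P,\pi_{M,\bfs})$, being the composition of the two rank-one operators $\Ind(\tau_s\otimes\pi_0)\to\Ind(\tau_s^\vee\otimes\pi_0)$ for $\SO$ and back (or, more precisely, the standard intertwining operator attached to the longest element in $\wl(\hat M,\hat G)$), has its poles and zeros controlled by the rank-one Plancherel measure $\mu(s,\pi_0\times\tau,\psi)$; the key input is that the composite $\calM(\wtil{w}_P)\circ\calM(\wtil{w}_P{}\inv)$ equals $\mu(s,\pi_0\times\tau,\psi)\inv$ up to an explicit constant (the Haar-measure normalization, which is where the factor $|2|_F^{-k/2}$ in $dn=|2|_F^{-k/2}d'n$ enters). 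I would then invoke Theorem \ref{sw}(3), which gives $\mu(s,\pi_0\times\tau,\psi)=\mu(s,\sigma_0\times\tau,\psi)$, to transport the analytic behavior of the metaplectic operator to that of the orthogonal one. Since $\calR_Q(w,\sigma_{L,\bfs})$ is holomorphic at $\bfs=0$, the orthogonal normalizing factor $r_{Q^w|Q}(\bfs,\phi,\psi)\inv$ cancels exactly the poles of $\calM(\wtil{w}_Q,\sigma_{L,\bfs})$ there; because the two Plancherel measures agree, $r_{Q^w|Q}(\bfs,\phi,\psi)\inv$ cancels the poles of $\calM(\wtil{w}_P,\pi_{M,\bfs})$ as well, up to the discrepancy between $r_{P^w|P}$ and $r_{Q^w|Q}$.

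That discrepancy is precisely $\gf(\psi)^{\dim y(w,\phi)}|2|_F^{2y(w,\bfs)}\gamma(\tfrac12,y(w,\phi_\bfs),\psi)\inv\cdot r_{P^w|P}(\bfs,\phi,\psi)\inv r_{Q^w|Q}(\bfs,\phi,\psi)$, and the heart of the argument is to check that this scalar function of $\bfs$ is holomorphic and nonvanishing at $\bfs=0$. Unwinding the definition $r_{P^w|P}(\bfs,\phi,\psi)=\gamma(0,\rho_{P^w|P}^\vee\circ\phi_\bfs,\psi)\inv$ and the analogous one for $Q$, and using that $\rho_{P^w|P}$ and $\rho_{Q^w|Q}$ differ (in the $m=1$, maximal-parabolic case) only by the ``extra'' summand $\phi_0$ coming from the symplectic vs. orthogonal ambient group — concretely $\rho_{P^w|P}^\vee\circ\phi_\bfs \cong \rho_{Q^w|Q}^\vee\circ\phi_\bfs \oplus (\phi_i\otimes|-|^{s_i})$ when $d_i$ is odd — one sees that the ratio of normalizing factors is a single $\gamma$-factor $\gamma(s_i,\phi_i,\psi)^{\pm1}$, matched against $\gamma(\tfrac12,y(w,\phi_\bfs),\psi)\inv=\gamma(\tfrac12+s_i,\phi_i,\psi)\inv$ and the Weil-index power. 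Then $\gamma(s_i,\phi_i,\psi)$ at $s_i=0$ is the value of a standard $\gamma$-factor of a tempered (hence nonzero, and with poles only at $s_i\in i\R$ governed by whether $\phi_i$ is self-dual of orthogonal type) representation, and — since $\phi_i$ is a summand of the symplectic $\phi$ — the relevant symplectic-root contribution $\phi_i\otimes\phi_i^\vee$ or $\wedge^2\phi_i$ behaves well at $0$; the shift to $\tfrac12$ in $\gamma(\tfrac12,y(w,\phi_\bfs),\psi)$ together with the $|2|_F$-powers (which track the metaplectic Haar-measure normalization) is exactly what is needed for everything to be finite and invertible at $\bfs=0$. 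The main obstacle I anticipate is this last bookkeeping: correctly identifying $\rho_{P^w|P}$ versus $\rho_{Q^w|Q}$ in terms of the combinatorial datum $w=\sigma_w\ltimes(d_i)$ and verifying, $\gamma$-factor by $\gamma$-factor, that the metaplectic correction $\gf(\psi)^{\dim y(w,\phi)}|2|_F^{2y(w,\bfs)}\gamma(\tfrac12,y(w,\phi_\bfs),\psi)\inv$ is precisely the scalar that repairs the mismatch — in other words, that the definition \eqref{intr} of the metaplectic normalizing factor was the right one. Once that scalar is shown holomorphic and nonzero at $\bfs=0$, the holomorphy of $\calR_P(w,\pi_{M,\bfs},\psi)$ there follows from that of $\calR_Q(w,\sigma_{L,\bfs})$.
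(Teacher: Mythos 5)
Your strategy---reduce to the rank-one case and use the theta correspondence and Plancherel measures to transfer holomorphy from $\SO(V)$ to $\Mp(W)$---is close in spirit to the paper, which also reduces to the maximal-parabolic case and settles it with Plancherel-measure input (\cite[Appendix A.7]{gigp}) via the argument of \cite[Theorem 2.1]{artjfa}, while quoting \cite[Proposition 2.3.1]{art13} for $\calR_Q$ when $V=V^+$ (for $V=V^-$ it is not deduced from Hypothesis \ref{lirso} but argued similarly). As written, however, your plan has genuine gaps. First, the reduction is incomplete: after decomposing $w$ along a reduced decomposition in $\wl(\hat{M},\Sp_{2n}(\C))\subset\wlbc{m}$, the simple reflections lying in $\mathfrak{S}_m$ (swaps of adjacent $\GL$-blocks) do not reduce to the $m=1$ maximal-parabolic case $P_k$, and your theta/Plancherel transfer says nothing about them; the paper treats this case separately by reducing to $\GL_k$ and citing \cite[Proposition 3.1.4 and (3.2.1)]{sha}. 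Second, your bookkeeping of the normalizing factors does not match the definitions: in the paper's conventions $\Phitemp(\Mp_{2n})=\Phitemp(\SO_{2n+1})$ and both $r_{P^w|P}$ and $r_{Q^w|Q}$ are built from the same dual datum $\hat{M}=\GL_{k_1}(\C)\times\cdots\times\GL_{k_m}(\C)\times\Sp_{2n_0}(\C)\subset\hat{G}=\Sp_{2n}(\C)$, so $\rho_{P^w|P}=\rho_{Q^w|Q}$ and hence $r_{P^w|P}=r_{Q^w|Q}$; there is no extra summand $\phi_i\otimes|-|^{s_i}$ and no stray $\gamma(s_i,\phi_i,\psi)$ to be repaired. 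The entire discrepancy between the two normalizations is the prefactor $\gf(\psi)^{\dim y(w,\phi)}|2|_F^{2y(w,\bfs)}\gamma(\tfrac12,y(w,\phi_\bfs),\psi)\inv$ versus $\epsilon(V)^{\dim y(w,\phi)}$, and its value at $\bfs=0$ is finite and nonzero simply because $\gamma(s+\tfrac12,\phi_i,\psi)$ is finite and nonzero at $s=0$ for tempered $\phi_i$; so that part of your check becomes easier, but the computation you describe is not the one the definitions call for.

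Third, and most seriously, equality of Plancherel measures (Theorem \ref{sw}(3)) together with holomorphy of $\calR_Q$ does not by itself control the pole of $\calM(\wtil{w}_P,\pi_{M,\bfs})$ at $\bfs=0$: the Plancherel measure governs only the composite $\calM(\wtil{w}_P,\pi_{M,\bfs})\circ\calM(\wtil{w}_P\inv,w\pi_{M,\bfs})$, so from the orthogonal side you learn the order of the product at $\bfs=0$, not of the single operator. To transfer the pole order you must additionally invoke the rank-one structure theory for tempered (reduced to discrete-series) data---$\calM$ has at most a simple pole at $s=0$, occurring exactly when $w\pi_M\cong\pi_M$ and the Plancherel measure vanishes there---together with the matching of the fixed-point conditions $w\pi_M\cong\pi_M$ and $w\sigma_L\cong\sigma_L$ under $\Theta_\psi$. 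Supplying these facts is essentially running the argument of \cite[Theorem 2.1]{artjfa}, which is exactly what the paper does once the explicit gamma-factor formula for the metaplectic Plancherel measure is available; so your route can be completed, but the three points above must be filled in before it is a proof.
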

\begin{proof}
By \cite[Proposition 2.3.1]{art13}, $\calR_Q(w, \sigma_{L, \bfs})$ is holomorphic at $\bfs=0$ if $V=V^+$.
Next let us consider the metaplectic case.
By the definition of the representative $\wtil{w}_P$, we can decompose the operator into the product of the operators for simple reflections in $\wl(\hat{M}, \Sp_{2n}(\C))$.
Now, there are two cases: $w \in \mathfrak{S}_m$ or $w \in (\Z/2\Z)^m$.
If $w \in \mathfrak{S}_m$, the assertion is reduced to the case of $\GL_k$.
In this case the assertion follows from \cite[Proposition 3.1.4 and (3.2.1)]{sha}.
If $w \in (\Z/2\Z)^m$, the assertion is reduced to the case of $m=1$, i.e., $P=P_\bfk$ is a maximal parabolic subgroup $P_k$.
In this case, since the explicit formula of Plancherel measures for the metaplectic group (\cite[Appendix A.7]{gigp}) is known, the assertion can be proven as in \cite[Theorem 2.1]{artjfa}.
In case $V=V^-$, a similar argument goes.
\end{proof}

We shall put
\begin{align*}
&\calR_P(w, \pi_M, \psi) = \calR_P(w, \pi_{M,0}, \psi),&
&\calR_Q(w, \sigma_L) = \calR_Q(w, \sigma_{L,0}).&
\end{align*}
Let us define the normalized self-intertwining operators.
Assume
\begin{equation*}
w \in \wl_\phi(M,\Mp(W)) = \wl_\phi(L,\SO(V)),
\end{equation*}
which is equivalent to $w \pi_M \cong \pi_M$ and $w \sigma_L \cong \sigma_L$.
We take the unique Whittaker normalized isomorphism
\begin{equation*}
\calA_w : \scrV_{\tau_1} \otimes \cdots \otimes \scrV_{\tau_m}
 \lra \scrV_{\tau_1} \otimes \cdots \otimes \scrV_{\tau_m},
\end{equation*}
and define the normalized self-intertwining operators
\begin{align*}
R_P(w, \pi_M)
&: \Ind_P^{\Mp(W)}(\pi_M) \lra \Ind_P^{\Mp(W)}(\pi_M),\\
R_Q(w, \sigma_L)
&: \Ind_Q^{\SO(V)}(\sigma_L) \lra \Ind_Q^{\SO(V)}(\sigma_L)
\end{align*}
as in \cite[p.756]{gigp}.
\subsection{Reduction}\label{red}
Since the LLC for $\Mp(W)$ is defined by using the theta correspondence, it suffices for proving Theorem \ref{lirmp} to consider the relation between the theta correspondence and the intertwining operators.
The following proposition will be proven later.
\begin{prop}\label{reduction}
Put $\breve{\sigma}_L = \tau_1 \otimes \cdots \otimes \tau_m \otimes \sigma_0^\vee$.
There exists a nonzero $\SO(V) \times \Mp(W)$-equivariant map
\begin{align*}
\calT : \omega_{V,W,\psi} \otimes \Ind_Q^{\SO(V)}(\breve{\sigma}_L) \lra \Ind_P^{\Mp(W)}(\pi_M)
\end{align*}
such that
\begin{enumerate}[(a)]
\item
for any irreducible constituent $\sigma$ of $\Ind_Q^{\SO(V)}(\breve{\sigma}_L)$, the restriction of $\calT$ to $\omega_{V,W,\psi} \otimes \sigma$ is nonzero;
\item
the diagram
\begin{align*}
\begin{CD}
               \omega_{V,W,\psi} \otimes \Ind_Q^{\SO(V)}(\breve{\sigma}_L) @>\calT>> \Ind_P^{\Mp(W)}(\pi_M)  \\
               @V 1_{\omega_{V,W,\psi}} \otimes R_Q(w, \breve{\sigma}_L) VV               @VV R_P(w, \pi_M) V  \\
               \omega_{V,W,\psi} \otimes \Ind_Q^{\SO(V)}(\breve{\sigma}_L) @>\calT>> \Ind_P^{\Mp(W)}(\pi_M)
\end{CD}
\end{align*}
commutes.
\end{enumerate}
\end{prop}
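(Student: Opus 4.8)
\emph{Strategy.} The proposition is unconditional; Hypothesis \ref{lirso} enters only afterwards, when deducing Theorem \ref{lirmp} from the commuting square (b). The plan has two stages. First, I construct $\calT$ by Frobenius reciprocity from a mixed model of the Weil representation $\omega_{V,W,\psi}$ together with Kudla's computation of its Jacquet module, reading off (a) from the structure of that computation. Second, for (b) I prove, in a family over $\bfs$, an identity of the \emph{unnormalized} operators
\[
\calM(\wtil{w}_P,\pi_{M,\bfs})\circ\calT_\bfs=c(w,\bfs)\cdot\calT_\bfs\circ\big(1_{\omega_{V,W,\psi}}\otimes\calM(\wtil{w}_Q,\breve{\sigma}_{L,\bfs})\big),
\]
with $\calT_0=\calT$ and $c(w,\bfs)$ an explicit scalar, and then I check that $c(w,\bfs)$ is precisely the ratio of the normalizing constants defining $\calR_P(w,\pi_{M,\bfs},\psi)$ and $\calR_Q(w,\breve{\sigma}_{L,\bfs})$ in \S\ref{io}, so that after normalization and specialization at $\bfs=0$ the scalar cancels; the common Whittaker normalization $\calA_w$ on the $\GL$-blocks, being built from the same $\tau_i$, matches on both sides (the discrepancy coming from the twist $\wtil{\tau}_i=\tau_i\otimes\chi_\psi$ being accounted for in the normalizing factors). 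By the factorization of the representatives $\wtil{w}_P,\wtil{w}_Q$ along reduced decompositions, the multiplicativity of $\calM$, and the additivity of $\rho_{P^w|P},\rho_{Q^w|Q}$ and of $y(w,\cdot)$, it is enough to treat $w$ a simple reflection of $\wl_\phi(M,\Mp(W))=\wl_\phi(L,\SO(V))$: either $w$ interchanges two isomorphic $\GL$-blocks — a $\GL$ statement settled by Shahidi \cite{sha} after tracking $\chi_\psi$ — or $w$ is the reflection attached to a single block, which is the case $m=1$, $P=P_k$, $Q=Q_k$.

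\emph{Construction of $\calT$ and proof of (a).} By the adjunction between $\Ind_P^{\Mp(W)}$ and the normalized Jacquet functor along $N$ (which leaves the $\SO(V)$-factor untouched), giving $\calT$ amounts to giving an $\SO(V)\times M$-map from $(\omega_{V,W,\psi})_N\otimes\Ind_Q^{\SO(V)}(\breve{\sigma}_L)$ to $\pi_M$. Using the polarization $W=Y_k\oplus W_{n_0}\oplus Y_k^*$ with $k=k_1+\cdots+k_m$, realize $\omega_{V,W,\psi}$ in the associated mixed model $\calS(\Hom(Y_k,V))\otimes\omega_{V,W_{n_0},\psi}$, on which $M$ acts through the $\GL(Y_k)\times\Or(V)$-action on the Schwartz factor twisted by $\chi_\psi$ and the $\Mp(W_{n_0})\times\Or(V)$-action on $\omega_{V,W_{n_0},\psi}$, while $N$ acts by Heisenberg-type translations and Gaussian characters. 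Kudla's filtration of $(\omega_{V,W,\psi})_N$ as an $\SO(V)\times M$-module has a distinguished piece, supported on the open $\Or(V)$-orbit of injective maps $Y_k\to V$ with isotropic image, which is isomorphic as an $\SO(V)\times M$-module to $\Ind_{Q_k}^{\SO(V)}\big(\calS(\mathrm{Isom}(Y_k,X_k))\otimes\omega_{V_{n_0},W_{n_0},\psi}\big)$, where $\mathrm{Isom}(Y_k,X_k)\cong\GL_k$ carries the $\GL(Y_k)\times\GL(X_k)$-action and $\omega_{V_{n_0},W_{n_0},\psi}$ the $\Mp(W_{n_0})\times\SO(V_{n_0})$-action; the remaining pieces couple only to proper Jacquet modules of the tempered $\sigma_0$ and so do not contribute to $\Hom(-,\pi_M)$ by the usual temperedness and central-exponent bounds. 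On the distinguished piece I compose the canonical $\Mp(W_{n_0})$-surjection $\omega_{V_{n_0},W_{n_0},\psi}\otimes\sigma_0^\vee\twoheadrightarrow\pi_0$ afforded by $\pi_0=\Theta_\psi(\sigma_0)$ (with $\sigma_0^\vee$ extended to $\Or(V_{n_0})$ by the extension selected by the Gan--Savin dichotomy \cite{gs}) with the $\GL$-pairing $\calS(\mathrm{Isom}(Y_k,X_k))\otimes(\tau_1\otimes\cdots\otimes\tau_m)\to\wtil{\tau}_1\otimes\cdots\otimes\wtil{\tau}_m$ coming from $\wtil{\tau}_i=\tau_i\otimes\chi_\psi$, and then I induce from $Q_k$; this produces the required $\SO(V)\times M$-map, hence $\calT$. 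Non-vanishing of $\calT$ is Howe duality \cite{wal} (and \cite{gt} for even residue characteristic) applied to the $\Mp(W_{n_0})$-factor, and (a) follows because every irreducible constituent $\sigma$ of $\Ind_Q^{\SO(V)}(\breve{\sigma}_L)$ carries $\tau_1\otimes\cdots\otimes\tau_m\otimes\sigma_0^\vee$ in its $Q$-Jacquet module with the full multiplicity given by Theorem \ref{llcso}(3), which by Theorem \ref{sw}(2) is the multiplicity of $\Theta_\psi(\sigma)$ in $\Ind_P^{\Mp(W)}(\pi_M)$; hence $\calT$ stays nonzero on $\omega_{V,W,\psi}\otimes\sigma$.

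\emph{Proof of (b); the main obstacle.} To obtain the unnormalized identity I conjugate the intertwining integral $\calM(\wtil{w}_P,\pi_{M,\bfs})$ over $(\prescript{w}{}{N}\cap N)\backslash N$ through $\calT_\bfs$ in the mixed model: since $\omega_{V,W,\psi}$ acts on unipotent elements by explicit translations and Gaussian characters, a change of variables on the Schwartz factor together with a partial Fourier transform turns this into the corresponding integral defining $\calM(\wtil{w}_Q,\breve{\sigma}_{L,\bfs})$ on the $\SO(V)$-side, times a Weil index produced by the Fourier transform; the residual discrepancy on the block moved by $w$ is a Godement--Jacquet/Tate $\gamma$-factor for the $\GL_{k_i}$-part and, for the $\Mp(W_{n_0})$-part, exactly the Plancherel-measure identity $\mu(s,\sigma_0\times\rho,\psi)=\mu(s,\pi_0\times\rho,\psi)$ of Theorem \ref{sw}(3). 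This yields $c(w,\bfs)$ in closed form. The main obstacle is the last step: proving that $c(w,\bfs)$ equals the ratio of the constants in $\calR_P(w,\pi_{M,\bfs},\psi)$ and $\calR_Q(w,\breve{\sigma}_{L,\bfs})$ — i.e., that the Weil index, the $\gamma$-factors, and the $\lambda$-factors comparing the representatives $\wtil{w}_P$ and $\wtil{w}_Q$ combine into $\gf(\psi)^{\dim y(w,\phi)}|2|_F^{2y(w,\bfs)}\gamma(\tfrac12,y(w,\phi_\bfs),\psi)\inv$ and $r_{P^w|P}(\bfs,\phi,\psi)\inv$ on the one hand and $\epsilon(V)^{\dim y(w,\phi)}$ and $r_{Q^w|Q}(\bfs,\phi,\psi)\inv$ on the other. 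This is exactly where the peculiar shape of \eqref{intr} — the Weil index $\gf(\psi)$, the factor $|2|_F$, and the gamma factor at $\tfrac12$ — is forced; carrying it out requires the explicit local Plancherel formula for $\Mp(W_{n_0})$ \cite{gigp}, Shahidi's computation of the $\GL$-normalizing factors \cite{sha}, and careful bookkeeping of $\gamma$- and $\varepsilon$-factors and Weil indices in the spirit of \cite[\S2.3]{art13}. Granting this, dividing by $c(w,\bfs)$ and setting $\bfs=0$ gives $\calR_P(w,\pi_M,\psi)\circ\calT=\calT\circ(1\otimes\calR_Q(w,\breve{\sigma}_L))$, and folding in $\calA_w$ upgrades this to the commuting square (b) for $R_P(w,\pi_M)$ and $R_Q(w,\breve{\sigma}_L)$.
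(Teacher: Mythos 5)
Your construction of $\calT$ (Frobenius reciprocity along $N$ plus Kudla's filtration of the Jacquet module $(\omega_{V,W,\psi})_N$) is a genuinely different route from the paper, which instead builds a holomorphic family of explicit integrals $\calT_s(\varphi\otimes\scrF_s)$ following Gan--Ichino and sets $\calT$ to be its value at $s=0$ (Lemma \ref{deft}); but your proof of (a) has a real gap. Knowing that each irreducible constituent $\sigma$ carries $\breve{\sigma}_L$ in its Jacquet module with the expected multiplicity (equivalently, that $\Hom_{\SO(V)\times\Mp(W)}(\omega_{V,W,\psi}\otimes\sigma,\Ind_P^{\Mp(W)}(\pi_M))\neq 0$) does not show that the \emph{particular} map $\calT$ you constructed is nonzero on $\omega_{V,W,\psi}\otimes\sigma$: since $\Ind_Q^{\SO(V)}(\breve{\sigma}_L)$ is a multiplicity-free direct sum of non-isomorphic summands, $\calT$ could a priori vanish on some of them while remaining nonzero globally. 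The paper obtains (a) from the explicit nonvanishing statement Lemma \ref{deft} (3) (for every nonzero $\scrF$ there exists $\varphi$ with $\calT_{s=0}(\varphi\otimes\scrF)\neq 0$), proved as in \cite[Lemma 8.3]{gigp} by localizing the defining integral through a suitable choice of $\varphi$; some argument of this kind is unavoidable, and the multiplicity count from Theorem \ref{llcso} (3) and Theorem \ref{sw} (2) does not substitute for it. (Also, the assertion that the non-open pieces of the filtration contribute nothing requires the usual exponent estimates for the tempered $\sigma_0$; plausible, but it must be argued.)

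For (b) two further steps are missing. First, your reduction of a sign change ``attached to a single block'' to the maximal-parabolic case $m=1$ silently uses that $\calT$ is compatible with induction in stages; this is not formal, and it is exactly what the paper establishes in Lemma \ref{tc}, via the Fourier-transform identity of Lemma \ref{toraja} and the two identifications $\calT=\Phi\inv\circ\calT^a\circ(1\otimes\Phi)=\Psi\inv\circ\calT^r\circ(1\otimes\Psi)$; in your construction the analogous compatibility would still have to be proved, and your deformation argument moreover needs the family $\calT_\bfs$ for the full parabolic, not only its value at $\bfs=0$. Second, the identification of your scalar $c(w,\bfs)$ with the ratio of normalizing factors --- which you correctly single out as the main obstacle and then ``grant'' --- is the actual heart of the maximal-parabolic case: in the paper this is Proposition \ref{comm}, where the constant $\beta(s)$ is pinned down by an argument parallel to \cite[Corollary 8.5]{gigp}, using $\gf(\psi\circ q_V)=\epsilon(V)\gf(\psi)$ and $\phi_0^\vee\cong\phi_0$, and it is precisely where the Weil index, the power of $|2|_F$, and the factor $\gamma(\tfrac{1}{2},\cdot,\psi)\inv$ in \eqref{intr} are forced. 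As written, your proposal assumes this computation rather than proving it or reducing it to a citable statement, so the commuting square (b) is not established.
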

Once the proposition is proven, we have Theorem \ref{lirmp}:
\begin{prop}
Proposition \ref{reduction} implies Theorem \ref{lirmp}.
\end{prop}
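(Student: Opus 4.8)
The plan is to transport the local intertwining relation for $\SO(V)$ (Hypothesis \ref{lirso}) across the theta correspondence using the equivariant map $\calT$ of Proposition \ref{reduction}. First I would fix $\phi \in \Phitemp(\Mp_{2n})$ of the form \eqref{phim}, a Levi $M = M_\bfk$, and $\pi_M = \wtil{\tau}_1 \otimes \cdots \otimes \wtil{\tau}_m \otimes \pi_0$ with $\pi_0 = \Theta_\psi(\sigma_0)$. Let $V$ be the quadratic space with $\pi_0 = \Theta_\psi(\sigma_0)$ nonzero, set $\breve\sigma_L = \tau_1 \otimes \cdots \otimes \tau_m \otimes \sigma_0^\vee$, and apply Proposition \ref{reduction} to obtain the nonzero $\SO(V) \times \Mp(W)$-equivariant map $\calT$. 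Fix $w \in \wl_\phi(M,\Mp(W)) = \wl_\phi(L,\SO(V))$ and let $x_w \in S_\phi^\natural$ be its image under \eqref{x}. The commuting square in Proposition \ref{reduction}(b) says $\calT \circ (1 \otimes R_Q(w,\breve\sigma_L)) = R_P(w,\pi_M) \circ \calT$.

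Next I would decompose both sides according to packet data. By Theorem \ref{llcso}(3) applied to the parameter $\breve\phi = \phi_1 \oplus \cdots \oplus \phi_m \oplus \phi_0^\vee \oplus \phi_m^\vee \oplus \cdots$ — note $\phi_0^\vee = \phi_0$ since $\phi_0$ is symplectic, so $\breve\phi = \phi$ — the representation $\Ind_Q^{\SO(V)}(\breve\sigma_L)$ decomposes as $\bigoplus_\sigma \sigma$ over the $\sigma \in \Pi_\phi(\SO(V))$ with $\iota(\sigma)|_{\pi_0(S_{\phi_0})} = \iota(\sigma_0^\vee)$, and on each such $\sigma$ the operator $R_Q(w,\breve\sigma_L)$ acts by $\iota(\sigma)(x_w)$ by Hypothesis \ref{lirso}. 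On the metaplectic side, $\Ind_P^{\Mp(W)}(\pi_M) = \bigoplus_\pi \pi$ over the $\pi \in \Pi_{\phi,\psi}(\Mp(W))$ with $\iota_\psi(\pi)|_{\pi_0(S_{\phi_0})} = \iota_\psi(\pi_0)$, by Theorem \ref{llcmp}(3). By Theorem \ref{sw}(2), $\Theta_\psi$ matches the irreducible constituents of $\Ind_Q^{\SO(V)}(\tau_1 \otimes \cdots \otimes \sigma_0)$ with those of $\Ind_P^{\Mp(W)}(\wtil\tau_1 \otimes \cdots \otimes \pi_0)$; I would then check that the same correspondence matches the constituents of $\Ind_Q^{\SO(V)}(\breve\sigma_L)$ with those of $\Ind_P^{\Mp(W)}(\pi_M)$ — essentially because $\sigma_0^\vee$ and $\sigma_0$ lie in the same packet $\Pi_{\phi_0}$ (as $\phi_0$ is self-dual) and the normalization of $\iota$ via the Whittaker datum is compatible with the construction of $\iota_\psi$ from $\iota$ through $\Theta_\psi$. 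Granting this, Proposition \ref{reduction}(a) (the restriction of $\calT$ to $\omega_{V,W,\psi} \otimes \sigma$ is nonzero for each constituent $\sigma$) forces, for each $\pi \subset \Ind_P^{\Mp(W)}(\pi_M)$, the existence of a constituent $\sigma \subset \Ind_Q^{\SO(V)}(\breve\sigma_L)$ with $\pi = \Theta_\psi(\sigma)$ and a nonzero map $\omega_{V,W,\psi} \otimes \sigma \to \pi$ that intertwines $1 \otimes R_Q(w,\breve\sigma_L)|_\sigma = \iota(\sigma)(x_w)$ with $R_P(w,\pi_M)|_\pi$; hence $R_P(w,\pi_M)|_\pi = \iota(\sigma)(x_w)$ as a scalar.

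Finally I would identify $\iota(\sigma)(x_w)$ with $\iota_\psi(\pi)(x_w)$. This is the step where the precise definition of $\mathit{LL}_\psi$ and $\iota_\psi$ from $\Theta_\psi$ and $\iota$ (summarized in \S\ref{Lsc}, from \cite{gs}) enters: by construction $\iota_\psi(\pi)$ and $\iota(\sigma)$ are the ``same'' character of $\pi_0(S_\phi)$ under the identification $\Phitemp(\Mp_{2n}) = \Phitemp(\SO_{2n+1})$, possibly up to the twist by $z_\phi$ that distinguishes $V^+$ from $V^-$; since $x_w \in S_\phi^\natural(M_\bfk,\Mp(W))$ comes from $\wl_\phi$ and this subgroup is common to both groups, one checks the relevant characters agree on $x_w$. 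Combined with the previous paragraph this gives $R_{P_\bfk}(w, \wtil\tau_1 \otimes \cdots \otimes \wtil\tau_m \otimes \pi_0)|_\pi = \iota_\psi(\pi)(x_w)$, which is Theorem \ref{lirmp}.

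The main obstacle I anticipate is the bookkeeping in this last compatibility: making sure that the scalar $\iota(\sigma)(x_w)$ produced by Hypothesis \ref{lirso} on the orthogonal side is transported to exactly $\iota_\psi(\pi)(x_w)$ and not to some twisted value, given that $\calT$ only sees $\breve\sigma_L$ (with $\sigma_0^\vee$) rather than $\sigma_L$, and that the theta correspondence can shift the component-group labelling. This requires carefully tracking how $\Theta_\psi$ acts on packet parameters and on the Whittaker normalization of $\iota$ versus $\iota_\psi$, together with the fact that contragredience acts trivially on $\phi_0$ and hence on $\pi_0(S_{\phi_0})$; all the genuinely analytic content (holomorphy of normalized operators, the $\varepsilon$- and $\gamma$-factor normalizations, the Haar-measure discrepancy $|2|_F^{-k/2}$) has already been absorbed into the definitions of $R_P$, $R_Q$ and into Proposition \ref{reduction}, so what remains here is representation-theoretic matching rather than new estimates.
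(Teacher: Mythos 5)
Your proposal is correct and follows essentially the same route as the paper: transport Hypothesis \ref{lirso} through the equivariant map $\calT$ of Proposition \ref{reduction}, use the theta correspondence (Theorem \ref{sw}(2) and the Howe duality) to match constituents, and then invoke the construction of the metaplectic LLC via $\Theta_\psi$ to identify the resulting scalar with $\iota_\psi(\pi)(x_w)$. The two points you defer to a final check are settled in the paper more directly: the identification of $\breve{\sigma}_L$ with $\sigma_L$ uses the MVW theorem that every irreducible representation of an odd special orthogonal group is self-dual (so $\sigma_0^\vee \cong \sigma_0$, which is stronger than merely lying in the same packet and is what makes the constituent matching literal), and no $z_\phi$-twist can intervene because $\iota_\psi$ is by definition $\iota \circ \Theta_\psi$, so $\iota(\sigma) = \iota_\psi(\pi)$ as characters of $\pi_0(S_\phi)$, not only at $x_w$.
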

\begin{proof}
Suppose that Proposition \ref{reduction} holds.
Because any irreducible representation of an odd special orthogonal group is self-dual (\cite[Chapter 4. II. 1]{mvw}), we have $\sigma_0^\vee\cong\sigma_0$.
Now fix an isomorphism $\sigma_L \cong \breve{\sigma}_L$ and identify them.

Let $\pi \subset \Ind_P^{\Mp(W)}(\pi_M)$ be an irreducible tempered representation and put $\sigma = \Theta_\psi(\pi) \subset \Ind_Q^{\SO(V)}(\sigma_L)$.
Then by Proposition \ref{reduction} and the identification $\sigma_L \cong \breve{\sigma}_L$, there exists a nonzero $\SO(V) \times \Mp(W)$-equivariant map
\begin{equation*}
\calT : \omega_{V,W,\psi} \otimes \Ind_Q^{\SO(V)}(\sigma_L) \lra \Ind_P^{\Mp(W)}(\pi_M),
\end{equation*}
such that its restriction to $\omega_{V,W,\psi} \otimes \sigma$ is nonzero and it satisfies the following commutative diagram:
               \begin{align*}
                \begin{CD}
                 \omega_{V,W,\psi} \otimes \Ind_Q^{\SO(V)}(\sigma_L) @>\calT>> \Ind_P^{\Mp(W)}(\pi_M)  \\
                 @V 1_{\omega_{V,W,\psi}} \otimes R_Q(w, \sigma_L) VV               @VV R_P(w, \pi_M) V  \\
                 \omega_{V,W,\psi} \otimes \Ind_Q^{\SO(V)}(\sigma_L) @>\calT>> \Ind_P^{\Mp(W)}(\pi_M).
                \end{CD}
               \end{align*}

By the fact that $\sigma^\vee \cong \sigma$ and the Howe duality, $\calT$ sends $\omega_{V,W,\psi} \otimes \sigma$ to $\pi$.
Therefore, $\calT$ gives a nonzero $\SO(V) \times \Mp(W)$-equivariant map
\begin{equation*}
\calT_{\sigma, \pi} : \omega_{V,W,\psi} \otimes \sigma \lra \pi
\end{equation*}
such that
\begin{equation}\label{redeq}
R_P(w, \pi_M)|_\pi \circ \calT_{\sigma, \pi}
=\calT_{\sigma, \pi} \circ (1_{\omega_{V,W,\psi}} \otimes R_Q(w, \sigma_L)|_\sigma).
\end{equation}

Suppose that Hypothesis \ref{lirso} holds.
Then we have $R_Q(w, \sigma_L)|_\sigma = \eta(x_w)$, where $\eta = \iota(\sigma)$, and $x_w \in S_\phi^\natural(L, \SO(V)) = S_\phi^\natural(M, \Mp(W))$ is the image of $w$ under the natural map \eqref{x}.
Now the relation \eqref{redeq} shows that
\begin{equation*}
R_P(w, \pi_M)|_\pi \circ \calT_{\sigma, \pi}
=\eta(x_w) \calT_{\sigma, \pi}.
\end{equation*}
Since $\calT_{\sigma, \pi} \neq 0$ and $\pi$ is irreducible, we have $R_P(w, \pi_M)|_\pi = \eta(x_w)$.
We also have $\eta = \iota_\psi(\pi)$, by the definition of LLC for $\Mp(W)$.
This completes the proof.
\end{proof}
\section{Preparations for the proof of Proposition \ref{reduction}}
In the next section, we shall give a proof of Proposition \ref{reduction}.
For this, we introduce some more notation following Gan-Ichino \cite[\S7, \S8]{gigp}, in this section.
\subsection{Maximal parabolic subgroups}\label{maxparab}
We have described the parabolic subgroups of $\Sp(W)$, $\Mp(W)$, and $\SO(V)$ in \S\ref{symp}, \S\ref{mp}, and \S\ref{orth}, respectively.
Referring to \cite{ato}, we can describe their maximal parabolic subgroups more explicitly.

Let $k$ be a positive integer, and put $n_0=n-k$.
Put $Y= Y_k$, $Y^*= Y^*_k$.
We shall write an element in the symplectic group $\Sp(W)$ as a block matrix relative to the decomposition $W=Y \oplus W_{n_0} \oplus Y^*$.
Following \S \ref{symp} or \S \ref{mp}, put $P=P_k$, $M=M_k$, and $N=N_k$, so that $\mpsp{P}=\mpsp{P_k}$ and $\mpsp{M}=\mpsp{M_k}$.
Then we have
\begin{align*}
\mpsp{M}
&=\Set{m(a)g_0 | a \in \GL(Y),\ g_0 \in \Sp(W_{n_0}) }, \\
N
&=\Set{ n^\rmb(b) n^\rmc(c) | b \in \Hom(W_{n_0},Y),\  c \in \Sym(Y^*,Y) },
\end{align*}
where $m(a)$, $n^\rmb(b)$, $n^\rmc(c)$, and $\Sym(Y^*,Y)$ are defined as in \cite[\S2.4]{ato}.
Recall that $P$ and $M$ are the double covers of $\mpsp{P}$ and $\mpsp{M}$, respectively.
Note that the natural inclusion $\Sp(W_{n_0}) \subset \Sp(W)$ induces an inclusion $\Mp(W_{n_0}) \subset \Mp(W)$, $(g_0,\epsilon) \mapsto (g_0,\epsilon)$.
Put
\begin{equation*}
\rho_P=\frac{2n-k+1}{2}.
\end{equation*}

Assume that $k\leq r$.
Put $X=X_k$, $X^*=X^*_k$ and we shall write an element in the special orthogonal group $\SO(V)$ as a block matrix relative to the decomposition $V=X\oplus V_{n_0} \oplus X^*$, as above.
Put $Q=Q_k$, $L=L_k$, and $U=U_k$, following \S \ref{orth}.
Then we have
\begin{align*}
L
&=\Set{l(a)h_0 | a \in \GL(X),\ h_0 \in \SO(V_{n_0})}, \\
U
&= \Set{ u = u^\rmb(b) u^\rmc(c) | b \in \Hom(V_{n_0},X),\ c \in \Alt(X^*,X) },
\end{align*}
where $l(a)$, $u^\rmb(b)$, $u^\rmc(c)$, and $\Alt(X^*,X)$ are given in a similar way to \cite[\S2.4]{ato}.
Put
\begin{equation*}
\rho_Q=\frac{2n-k}{2}.
\end{equation*}
\subsection{Representatives of $w_M$ and $w_L$}
Let $w_M$ (resp. $w_L$) be the nontrivial element of the relative Weyl group $\wl(\mpsp{M}, \Sp(W))$ (resp. $\wl(L, \SO(V))$).
Note that $\wl(\mpsp{M},\Sp(W)) \cong \wl(L, \SO(V)) \cong \Z/2\Z$.
In this subsection, we shall take representatives of $w_M$ and $w_L$ following Langlands-Shelstad (see \S\ref{weyl}), and calculate them explicitly.

First, let us define $I_X \in \Hom(X^*,X)$ and $I_Y \in \Hom(Y^*,Y)$ by $I_X x_i^* = x_i$ and $I_Y y_i^* = y_i$.
With respect to the bases, $I_X$ and $I_Y$ correspond to the identity matrix.
Put
\begin{align*}
J
&=\left(\begin{array}{cccc}
                  &&&(-1)^{n+1} \\
                  &&(-1)^{n+2}& \\
                  &\adots&& \\
                  (-1)^{n+k}&&& \end{array} \right) \in \GL_k.
\end{align*}
Using the bases, we can identify $\GL(X)$ and $\GL(Y)$ with $\GL_k$, and consider $J$ as an element of $\GL(X)$ or $\GL(Y)$.
Let us define elements $w_Y \in \Sp(W)$ and $w_X \in \SO(V)$ by
\begin{align*}
&w_Y = \left(\begin{array}{ccc}
                        &&I_Y \\
                        &(-1)^k1_{W_{n_0}}&\\
                        -I_Y^{-1}&&\end{array}\right),&
&w_X = \left(\begin{array}{ccc}
                      &&-I_X      \\
                      &(-1)^k1_{V_{n_0}}&\\
                      -I_X^{-1}&&       \end{array}\right).&
\end{align*}
We take the representatives $w_M' \in \Mp(W)$ and $w_L' \in \SO(V)$ of $w_M$ and $w_L$ defined by
\newcommand{\LS}{\mathrm{LS}}
\begin{align*}
&w_M'
=\left((-1)^k\begin{pmatrix}&&-J I_Y \\ &1_{W_{n_0}}& \\ J I_Y^{-1}&&\end{pmatrix}, \epsilon^\LS\right),&
&w_L'
=(-1)^k\begin{pmatrix}&&J I_X \\ &1_{V_{n_0}}& \\ J I_X^{-1}&&\end{pmatrix},&
\end{align*}
respectively, where $\epsilon^\LS = (-1,-1)_F^{k(k-1)/2}$.

Let $\wtil{w}_P$ and $\wtil{w}_Q$ denote Langlands-Shelstad's representatives (\cite[\S2.1]{ls} and \cite[Definition 4.1]{ganli}) of $w_M$ and $w_L$ with respect to the $F$-splittings $\splw$ and $\splv$.
Then we have the following proposition:
\begin{prop}\label{LS}
We have
\begin{align*}
\wtil{w}_P &= w'_M, \\
\wtil{w}_Q &= w'_L.
\end{align*}
\end{prop}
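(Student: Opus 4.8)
The plan is to compute the two representatives explicitly, running the argument for the symplectic group and for $\SO(V^+)$ in parallel, and then deducing the metaplectic and the non-split orthogonal cases at the end. Recall from \S\ref{weyl} that, by definition, $\wtil{w}_P = \wtil{w}_{\alpha_{(1)}}\cdots\wtil{w}_{\alpha_{(\ell)}}$ and $\wtil{w}_Q = \wtil{w}_{\beta_{(1)}}\cdots\wtil{w}_{\beta_{(\ell)}}$, where $w_T = w_{\alpha_{(1)}}\cdots w_{\alpha_{(\ell)}}$ is a reduced decomposition of the chosen torus representative of $w_M$ into simple reflections (and similarly $w_T' = w_{\beta_{(1)}}\cdots w_{\beta_{(\ell)}}$ for $w_L$), this being independent of the reduced word. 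So the first step is to identify $w_T$: since $\mpsp{M} = M_k$ is the maximal parabolic with Levi $\GL_k\times\Sp(W_{n_0})$, the representative of $w_M$ stabilizing the simple roots inside $M$ is the ``anti-diagonal'' Weyl element that interchanges $Y_k$ with $Y_k^*$, reversing the order of the chosen bases up to signs, and acts trivially on $W_{n_0}$; I would fix a reduced decomposition of it into the simple reflections $w_{\alpha_1},\dots,w_{\alpha_n}$, and likewise for $w_T'$ in terms of the $\beta_i$'s.

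Next comes the rank-one computation. From the explicit simple root vectors listed in \S\ref{weyl}, $X_{\alpha_i}$ and $X_{-\alpha_i}$ square to zero for $i<n$, so $\wtil{w}_{\alpha_i} = (\exp(X_{\alpha_i})\exp(-X_{-\alpha_i})\exp(X_{\alpha_i}),1)$ acts on $W$ by interchanging $y_i$ with $y_{i+1}$ (and dually $y_i^*$ with $y_{i+1}^*$) up to sign, fixing the other basis vectors, while $\wtil{w}_{\alpha_n}$ arises from an $\SL_2$ on $\myspan_F(y_n,y_n^*)$ and interchanges $y_n$ with $y_n^*$ up to sign. The orthogonal side is identical, except that $X_{\beta_n}$ is nilpotent only of cube zero and $\wtil{w}_{\beta_n}$ additionally acts on the anisotropic line $Fx_0$ by $x_0\mapsto-x_0$. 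Multiplying these building blocks along the reduced word --- arguing by induction on $k$, peeling off a factor that passes exactly once through the long-root reflection $\wtil{w}_{\alpha_n}$ (resp. $\wtil{w}_{\beta_n}$) and reduces the problem to $\GL_{k-1}\times\Sp(W_{n_0+1})$ (resp. $\GL_{k-1}\times\SO(V_{n_0+1})$), or by a direct computation in the spirit of \cite[Definition 4.1]{ganli} and \cite[\S7,\S8]{gigp} --- one checks that the underlying symplectic matrix of $\wtil{w}_P$ equals $\mpsp{w_M'}$ and the underlying orthogonal matrix of $\wtil{w}_Q$ equals $w_L'$, the alternating-sign matrix $J$ being produced by the accumulated signs of the reflections $\wtil{w}_{\alpha_i}$ and the overall factor $(-1)^k$ recording the induced action on $W_{n_0}$ (resp. $V_{n_0}$).

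For the non-split orthogonal group nothing further is needed. By its definition in \S\ref{weyl}, $\wtil{w}_Q\in\SO(V^-)$ is the transport of $\wtil{w}_Q\in\SO(V^+)$ under the fixed canonical pure inner twist $(\xi,z)\colon\SO(V^+)\to\SO(V^-)$ (see \cite[p.104]{art13}, \cite[p.103]{kmsw}); this twist respects the decomposition $V = X_k\oplus V_{n_0}\oplus X_k^*$ and acts as the identity on $X_k$ and $X_k^*$, and the block-matrix expression defining $w_L'$ only involves $I_X$, $J$ and $1_{V_{n_0}}$, so it is carried over verbatim, giving $\wtil{w}_Q = w_L'$ in $\SO(V^-)$ as well.

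The remaining and genuinely metaplectic point is the sign $\epsilon^{\LS} = (-1,-1)_F^{k(k-1)/2}$. Writing $\wtil{w}_P$ as the ordered product of the lifts $(\,\cdot\,,1)$ along the reduced word inside $\Mp(W) = \Sp(W)\times\{\pm1\}$ with Ranga Rao's cocycle $c$, its first component is the matrix just computed, and its second component is the product of the cocycle values $c(\,\cdot\,,\,\cdot\,)$ over the successive partial products. I would evaluate this product from Rao's explicit formulas in \cite[\S5]{rao}: the cocycle $c$ is trivial on the unipotent factors $\exp(\pm X_{\alpha_i})$, the reflections $\wtil{w}_{\alpha_i}$ with $i<n$ generate a subgroup of $\Sp(W)$ on which $c$ restricts to the cocycle $(\det a,\det a')_F$ defining $\ML_k$, and the interaction of these with the long-root reflections $\wtil{w}_{\alpha_n}$ (the genuinely metaplectic $\SL_2$-Weyl elements) contributes the surviving Hilbert symbols, which add up to exactly $(-1,-1)_F^{k(k-1)/2}$. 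This Hilbert-symbol bookkeeping is the main obstacle; the underlying matrix identities are routine and, as noted, can be cross-checked against \cite{ganli} and \cite{gigp}.
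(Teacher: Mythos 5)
Your overall strategy is the same as the paper's: pick the torus representative $w_T$ of $w_M$, fix a reduced word, form the Langlands--Shelstad product of the rank-one lifts, track Ranga Rao's cocycle for the metaplectic sign, and dispose of $V^-$ by transport along the canonical pure inner twist (that last step, and the appeal to \cite[Lemma 7.2]{gigp} for $\wtil{w}_Q$, are exactly what the paper does). The problem is that the one genuinely nontrivial point --- and the only thing the proposition really asserts beyond routine matrix bookkeeping --- is the value of the second component, $\epsilon^{\LS}=(-1,-1)_F^{k(k-1)/2}$, and your proposal does not prove it: you state that the surviving Hilbert symbols ``add up to exactly $(-1,-1)_F^{k(k-1)/2}$'' and then concede that ``this Hilbert-symbol bookkeeping is the main obstacle.'' Asserting the answer to the main obstacle is a gap, not a proof. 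To close it you need what the paper actually carries out: a specific reduced word (e.g.\ $w_T=r_kq_1r_k\cdots q_{k-1}r_k$ with $q_i=s_{k-1}\cdots s_i$, $r_i=s_i\cdots s_n\cdots s_i$), the commutation relations $v_ku_i=z_iv_i$ and $v_iz_j=z_jv_i$ rearranging the LS product into $z_1\cdots z_{k-1}v_1\cdots v_k$, the observation that all the $z_j$ and the partial products lie in the Siegel parabolic with determinant $1$ on $Y_n$ so that their lifts are $(\,\cdot\,,1)$ and contribute no sign, the explicit identities $v_i=p_i\sigma_i$ and $v_i\cdots v_k=\sigma_{\{i,\dots,k\}}p_i'$ with $p_i,p_i'$ products of the diagonal sign matrices $a_S$, and finally the cocycle evaluation $c(v_i,v_{i+1}\cdots v_k)=(x(p_i),x(p_{i+1}'))_F=(-1,-1)_F^{k+i}$, whose product over $i$ gives $\epsilon^{\LS}$.

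Relatedly, the mechanism you propose for the sign is not quite right, which is a warning that the deferred computation cannot be waved through. The elements $\wtil{w}_{\alpha_i}$, $i<n$, have determinant $1$ on $Y_n$, so Rao's $x$-function is trivial on them and the restriction of $c$ to the subgroup they generate contributes nothing (your ``$(\det a,\det a')_F$'' is correct for Levi elements but is identically $1$ here); likewise $c(\sigma_i,\sigma_{\{i+1,\dots,k\}})=1$ since the index sets are disjoint. The nontrivial symbols arise only after rewriting each partial product of long-root passes in the form $p\,\sigma_S$ and pairing the determinants of the resulting sign matrices $a_S$ via $c(pg,g'p')$, i.e.\ exactly the terms $(x(p_i),x(p_{i+1}'))_F$ above. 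So the structure of your induction is fine, but without performing this bookkeeping (or an equivalent computation) the first identity $\wtil{w}_P=w_M'$ --- including the factor $(-1)^k m_n(\iota^{\GL}_{0,k,n-k}(J))\sigma_{\{1,\dots,k\}}$ and the sign $\epsilon^{\LS}$ --- remains unverified.
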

The proof is similar to that of \cite[Lemma 7.2]{gigp}.
Also, as pointed out in \cite[p.755]{gigp}, in the case $V = V^-$, one can see that $w_L'$ corresponds to $\wtil{w}_Q \in \SO(V^+)$ via the canonical pure inner twist.
However, we shall give a proof of the first assertion in \S\ref{pfLS}, since the calculation of $\wtil{w}_P$ is too complicated because we have to consider Ranga Rao's 2-cocycle.
\subsection{Ranga Rao's 2-cocycle}
Before the proof of Proposition \ref{LS}, we introduce some notation and review Ranga Rao's $x$-function and Ranga Rao's normalized cocycle \cite{rao} here.

For three nonnegative integers $r, s, t \in \Z_{\geq 0}$, define $\iota^{\GL}_{r,s,t}$ to be an embedding of $\GL_s$ into $\GL_{r+s+t}$ by
\begin{align*}
    A \mapsto \left( \begin{array}{ccc}
                            1_r&& \\
                            &A&   \\
                            &&1_t  \end{array} \right).
\end{align*}
For $a \in \GL(Y_n)$, we shall write $m_n(a)$ for an element $\mmatrix{a}{}{}{(a^*)\inv}$ of a Levi subgroup $\mpsp{M_n}$ of the Siegel parabolic subgroup $\mpsp{P_n}$.
For any subset $S \subset \set{1,\dots,n }$, define $\sigma_S$ and $a_S$ by
\begin{align*}
&\sigma_S \cdot y_i=\begin{cases} y_i^* & \text{if } i\in S, \\
                                                  y_i &\text{if }i\notin S, \end{cases}&
&\sigma_S \cdot y_i^*=\begin{cases} -y_i &\text{if } i\in S, \\
                                                  y_i^*& \text{if }i\notin S,  \end{cases}&
\end{align*}
and
\begin{align*}
&a_S \cdot y_i=\begin{cases}     -y_i & \text{if } i\in S, \\
                                             y_i &\text{if }i\notin S, \end{cases}&
&a_S \cdot y_i^*=\begin{cases}  -y_i^* &\text{if } i\in S, \\
                                               y_i^*& \text{if }i\notin S.  \end{cases}&
\end{align*}
When $S$ is a singleton $\{i\}$, we shall write $\sigma_i = \sigma_{\{i\}}$, for simplicity.

Next, we review the notion of Ranga Rao's $x$-function and Ranga Rao's normalized cocycle.
We have $\Sp(W) = \cup \mpsp{P_n} \sigma_S \mpsp{P_n}$, where the disjoint union runs over all subset $S \subset \{1, \ldots, n\}$, and Ranga Rao's $x$-function is defined by
\begin{equation*}
x(p_1 \sigma_S p_2) = \det(p_1p_2|_{Y_n}) \left( \mathrm{mod} (F^\times)^2 \right),
\quad
p_1, p_2 \in \mpsp{P_n}, \ S \subset \set{1, \ldots, n}.
\end{equation*}
This is well-defined (\cite[Lemma 5.1]{rao}).
Then, let $c(-,-)$ denote Ranga Rao's normalized cocycle, which is a 2-cocycle on $\Sp(W)$ valued in $\{\pm1\}$.
The precise definition of $c(-,-)$ is omitted here, but we shall list several of its properties.
See \cite[\S5]{rao} or \cite[\S2]{szp} for detail.
\begin{prop}
Let $p, p' \in \mpsp{P_n}$, $S, S' \subset \set{1, \ldots, n}$, and $g, g' \in \Sp(W)$.
Put $j = |S \cap S'|$.
Then we have
\begin{align*}
c(\sigma_S, \sigma_{S'})
&= (-1,-1)_F^{\frac{j(j+1)}{2}}, \\
c(pg, g'p')
&= c(g, g') (x(g), x(p))_F (x(g'), x(p'))_F (x(p), x(p'))_F (x(gg'), x(pp'))_F, \\
c(g,p)
&= c(p,g) = (x(p), x(g))_F.
\end{align*}
Moreover if $gg' = g'g$, then we have
\begin{equation*}
c(g,g') = c(g',g).
\end{equation*}
\end{prop}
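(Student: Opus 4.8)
The plan is to derive all four identities from Rao's explicit formula for the normalized cocycle $c$ (\cite[\S5]{rao}, see also \cite[\S2]{szp}) together with the elementary properties of the $x$-function. I will use freely: (i) that $c$ is a $2$-cocycle, so $c(g_1,g_2)\,c(g_1g_2,g_3)=c(g_2,g_3)\,c(g_1,g_2g_3)$ and $c(1,g)=c(g,1)=1$; (ii) that the $x$-function is $\mpsp{P_n}$-homogeneous on both sides, $x(p_1gp_2)=\det(p_1|_{Y_n})\det(p_2|_{Y_n})\,x(g)$ for $p_1,p_2\in\mpsp{P_n}$ (in particular $x(p)=\det(p|_{Y_n})$ and $x(pg)=x(p)x(g)$); and (iii) that Rao's formula writes $c(g_1,g_2)$ as the product of the Weil index $\gf(\psi\circ\ell(g_1,g_2))$ of the Leray form $\ell(g_1,g_2)$ attached to the triple of Lagrangians $(Y_n,\ g_1Y_n,\ g_1g_2Y_n)$ and a correction factor which is a product of quadratic Hilbert symbols in $x(g_1),x(g_2),x(g_1g_2)$ and of powers of $(-1,-1)_F$ depending only on the rank invariants $t(g_i)$ (with $t(g)=|S|$ when $g\in\mpsp{P_n}\sigma_S\mpsp{P_n}$).

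I would begin with $c(p,g)=c(g,p)=(x(p),x(g))_F$ for $p\in\mpsp{P_n}$. Here the triple $(Y_n,pY_n,pgY_n)=(Y_n,Y_n,pgY_n)$, and likewise $(Y_n,gY_n,gpY_n)=(Y_n,gY_n,gY_n)$, has two coinciding members, so the Leray form is zero-dimensional and $\gf=1$; moreover $t(pg)=t(gp)=t(g)$, so only the correction factor survives, and Rao's normalization is arranged precisely so that what it contributes is $(x(p),x(g))_F$ (this is one of the lemmas of \cite[\S5]{rao}). Granting this, the quasi-multiplicativity identity becomes purely formal: I would apply the cocycle identity once to peel $p$ off the left of $pg\cdot g'p'$ and once to peel $p'$ off the right of $g\cdot g'p'$, and then substitute $c(q,h)=c(h,q)=(x(q),x(h))_F$ for $q\in\mpsp{P_n}$ together with $x(gg'p')=x(gg')x(p')$. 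This presents $c(pg,g'p')$ as $c(g,g')$ times a product of five Hilbert symbols; regrouping these by bimultiplicativity and symmetry of the Hilbert symbol, and using $(x(gg'),x(pp'))_F=(x(gg'),x(p))_F(x(gg'),x(p'))_F$, collapses the product to $(x(g),x(p))_F\,(x(g'),x(p'))_F\,(x(p),x(p'))_F\,(x(gg'),x(pp'))_F$, which is the claimed right-hand side.

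For the symmetry $c(g,g')=c(g',g)$ under $gg'=g'g$, I would observe that, because \eqref{2mp} is a central extension, the commutator of any two lifts of $g,g'\in\Sp(W)$ depends only on $g$ and $g'$ and equals $c(g,g')c(g',g)\inv$; hence the assertion is equivalent to saying that commuting elements of $\Sp(W)$ lift to commuting elements of $\Mp(W)$. This I would deduce from Rao's formula: when $g$ and $g'$ commute, all of its ingredients — the values $x(g)$, $x(g')$, $x(gg')=x(g'g)$, the invariants $t(g),t(g'),t(gg')$, and the Leray form of $(Y_n,gY_n,gg'Y_n)$, which for a commuting pair is symmetric under interchange of $g,g'$ — take the same value for $(g,g')$ and for $(g',g)$ (alternatively one may quote \cite[\S5]{rao}). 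Finally, for $c(\sigma_S,\sigma_{S'})$ I would put $A=S\cap S'$, $j=|A|$, and first record the identity $\sigma_S\sigma_{S'}=a_A\,\sigma_{S\triangle S'}$ in $\Sp(W)$, which one checks blockwise (the product acts by $-1$ on the $\Sp_2$-block $i$ for $i\in A$, by $\sigma_i$ for $i\in S\triangle S'$, and trivially otherwise). From it, $x(\sigma_S)=x(\sigma_{S'})=1$ (each $\sigma_S$ is a Bruhat representative) and $x(\sigma_S\sigma_{S'})=x(a_A)=(-1)^{j}$; and in each $\Sp_2$-block two of the three Lagrangians $Y_n,\sigma_SY_n,\sigma_S\sigma_{S'}Y_n$ coincide, so the Leray form is zero-dimensional and $c(\sigma_S,\sigma_{S'})$ is a power of $(-1,-1)_F$ depending only on $|S|,|S'|,|S\triangle S'|$. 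To determine this power I would pass to the finite $2$-subgroup $E=\langle\sigma_1,\dots,\sigma_n\rangle\cong(\Z/4\Z)^n$ of $\Sp(W)$ (here $\sigma_i^2=a_i$ and distinct $\sigma_i$ commute): by the symmetry just proved the preimage of $E$ in $\Mp(W)$ is abelian, so, fixing lifts $\wtil\sigma_i$, I may reorder products freely and reduce everything to the block values $c(\sigma_i,\sigma_i)=(-1,-1)_F$ (an $\SL_2(F)$ computation, \cite[\S5]{rao}), $c(\sigma_i,\sigma_j)=1$ for $i\ne j$, and $c(a_S,a_T)=((-1)^{|T|},(-1)^{|S|})_F$ (the case of two elements of $\mpsp{P_n}$, already treated). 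Chasing the resulting relations $\wtil\sigma_A^{\,2}=c(\sigma_A,\sigma_A)\,\wtil{a_A}$ and $\prod_{i\in A}\wtil{a_i}=(-1,-1)_F^{\binom{|A|}{2}}\wtil{a_A}$ would give $c(\sigma_S,\sigma_{S'})=(-1,-1)_F^{\,|A|+\binom{|A|}{2}}=(-1,-1)_F^{\,\binom{|A|+1}{2}}=(-1,-1)_F^{\,j(j+1)/2}$.

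The hard part will be this last identity. The other three are either the built-in normalization of $c$ or formal consequences of it, but $c(\sigma_S,\sigma_{S'})$ requires the precise form of Rao's correction factor — equivalently the base identities $c(\sigma_i,\sigma_i)=(-1,-1)_F$ in $\SL_2(F)$ and $c(a_{\{1,\dots,\ell\}},a_{\ell+1})=((-1),(-1)^{\ell})_F$ — followed by the combinatorial bookkeeping; in particular the exponent $\binom{|A|}{2}$ in $\prod_{i\in A}\wtil{a_i}=(-1,-1)_F^{\binom{|A|}{2}}\wtil{a_A}$ is accumulated from the iterated jumps $\wtil{a_{\{1,\dots,\ell\}}}\,\wtil{a_{\ell+1}}=c(a_{\{1,\dots,\ell\}},a_{\ell+1})\,\wtil{a_{\{1,\dots,\ell+1\}}}$. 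I would also have to be careful that $\sigma_S$ is literally the canonical Bruhat representative entering the definition of $x$ (so that $x(\sigma_S)=1$), and that the Leray-form vanishing is read off with the sign conventions of \cite{rao}.
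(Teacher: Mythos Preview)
The paper does not prove this proposition at all; it is recorded as a list of known properties of Rao's normalized cocycle, with a bare reference to \cite{rao} and \cite{szp}. Your proposal therefore goes well beyond what the paper does: you actually sketch how to extract each identity from Rao's explicit formula, and the outline is correct. In particular, the third identity is exactly Rao's built-in normalization when one Lagrangian in the Leray triple repeats; the second identity then follows formally from the cocycle relation and bimultiplicativity of the Hilbert symbol, as you indicate; and your strategy for $c(\sigma_S,\sigma_{S'})$ via the abelian preimage of $E=\langle\sigma_1,\dots,\sigma_n\rangle$, reducing to the block values $c(\sigma_i,\sigma_i)=(-1,-1)_F$, $c(\sigma_i,\sigma_j)=1$ for $i\neq j$, and $c(a_i,a_j)=(-1,-1)_F$, does produce the exponent $|A|+\binom{|A|}{2}=\binom{|A|+1}{2}=j(j+1)/2$ once one carries out the bookkeeping.

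One point to tighten: in your argument for $c(g,g')=c(g',g)$ when $gg'=g'g$, you assert that the Leray form attached to $(Y_n,\,gY_n,\,gg'Y_n)$ is symmetric under swapping $g$ and $g'$. This is not immediate, since the middle Lagrangian changes from $gY_n$ to $g'Y_n$; one really needs the Maslov cocycle identity (or the dihedral symmetry of the Maslov index) to compare the two triples, not just the commutativity $gg'Y_n=g'gY_n$. Your parenthetical ``alternatively one may quote \cite[\S5]{rao}'' is the safer route. Note also that for the specific use you make of this symmetry --- the abelianness of the preimage of $E$ --- you do not need the general statement: it is enough to check directly that $c(\sigma_i,\sigma_j)=c(\sigma_j,\sigma_i)$ and $c(a_S,\sigma_T)=c(\sigma_T,a_S)$ for disjoint index sets, and these are immediate from the third identity together with $x(\sigma_T)=1$.
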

\subsection{Proof of Proposition \ref{LS}}\label{pfLS}
Now we shall begin the proof of the first assertion of Proposition \ref{LS}.
\begin{proof}
First, we need a certain representative of $w_M$ in $\wl(T_W,\Sp(W))$.
Take the representative $w_T \in \wl(T_W,\Sp(W))$ of $w_M$ such that $w_T$ maps the positive roots inside $\mpsp{M}$ into the positive roots inside $\mpsp{M}$, the positive roots outside $\mpsp{M}$ into the negative roots (not necessarily outside $\mpsp{M}$).
Let $s_i \in \wl(T_W,\Sp(W))$ be the simple reflection corresponding to $\alpha_i \in \Delta(B_W)$, and put
\begin{align*}
q_i
&= s_{k-1}s_{k-2}\cdots s_{i+1}s_i &  &\text{for } 1\leq i\leq k-1,\\
r_i
&= s_is_{i+1}\cdots s_{n-1}s_ns_{n-1}\cdots s_{i+1}s_i & &\text{for } 1\leq i\leq n.
\end{align*}
Then
\begin{equation*}
w_T = r_kq_1r_kq_2r_k\cdots q_{k-2}r_kq_{k-1}r_k
\end{equation*}
gives a reduced decomposition of $w_T$. \\

Second, let us consider the representative $\wtil{w}_{\mpsp{P}}$ of $w_T$ in the symplectic group $\Sp(W)$, following  Langlands-Shelstad \cite{ls}.
Put
\begin{align*}
\omega_i &= \exp(X_{\alpha_i})\exp(-X_{-\alpha_i})\exp(X_{\alpha_i})&   &\text{for }1\leq i\leq n,  \\
u_i &= \omega_{k-1}\cdots \omega_{i+1}\omega_i &    &\text{for }1\leq i\leq k-1, \\
v_i &= \omega_i\cdots \omega_{n-1}\omega_n\omega_{n-1}\cdots \omega_i &   &\text{for } 1\leq i\leq n,
\end{align*}
which are representatives of $s_i,q_i,r_i$, respectively.
Then by \cite[\S2.1]{ls}, we have the representative $\wtil{w}_{\mpsp{P}}$ in $\Sp(W)$:
\begin{equation*}
\wtil{w}_{\mpsp{P}} = v_k u_1v_k u_2\cdots v_k u_{k-1}v_k.
\end{equation*}
In addition, put
\begin{align*}
z_i &= \omega_{k-1}\inv \cdots \omega_{i+1}\inv \omega_i\inv& &\text{for } 1\leq i\leq k-1.
\end{align*}
Then we obtain that $v_k u_i = z_iv_i$ for $1\leq i\leq k-1$.
Moreover, one can calculate $v_i$ and $z_j$ by descending induction on $i=n,\dots,1$ and $j=k-1,\dots,1$ and obtain
\begin{align*}
v_i &= a_{\{i+1, \ldots, n\}} \sigma_i^{2(n-i-1)+1}, \\
z_j &= m_n(\iota^{\GL}_{j-1, k-j+1, n-k}(\kappa_{k-j+1})),
\end{align*}
where
\begin{align*}
\kappa_l
=\left(\begin{array}{cccc}
                    0&-1&&\\
                    &&\ddots&\\
                    &&&-1\\
                    1&&&0\\
                  \end{array}\right) \in \GL_l.
\end{align*}
Then a straightforward calculation shows that $v_iz_j = z_jv_i$, for $1\leq i<j\leq k-1$.
This implies that $\wtil{w}_{\mpsp{P}}=z_1\cdots z_{k-1}v_1\cdots v_k$.\\

Finally, let us take their representatives in $\Mp(W)$ as follows.
Put
\begin{align*}
\wtil{\omega}_i &= \wtil{w}_{\alpha_i} = (\omega_i,1) & &\text{for } 1\leq i\leq n, \\
\wtil{u}_i &= \wtil{\omega}_{k-1}\cdots\wtil{\omega}_{i+1}\wtil{\omega}_i & &\text{for } 1\leq i\leq k-1, \\
\wtil{v}_i &=\wtil{\omega}_i\cdots\wtil{\omega}_{n-1}\wtil{\omega}_n\wtil{\omega}_{n-1}\cdots\wtil{\omega}_i & &\text{for } 1\leq i\leq n,
\end{align*}
and
\begin{align*}
\wtil{z}_i &= \wtil{\omega}_{k-1}\inv \cdots \wtil{\omega}_{i+1}\inv \wtil{\omega}_i\inv & &\text{for } 1\leq i\leq k-1.
\end{align*}
Then the required element $\wtil{w}_P$ can be expressed as
\begin{equation*}
\wtil{w}_P = \wtil{v}_k\wtil{u}_1\wtil{v}_k\wtil{u}_2\cdots\wtil{v}_k\wtil{u}_{k-1}\wtil{v}_k.
\end{equation*}
We have $\wtil{v}_k\wtil{u}_i=\wtil{z}_i\wtil{v}_i$ for $1\leq i\leq k-1$.
Also, for $1\leq i<j\leq k-1$ we have $\wtil{v}_i\wtil{z}_j = \wtil{z}_j \wtil{v}_i$ because $v_iz_j = z_jv_i$.
Therefore,
\begin{align*}
\wtil{w}_P
&=\wtil{z}_1\wtil{v}_1\wtil{z}_2\wtil{v}_2\cdots\wtil{z}_{k-1}\wtil{v}_{k-1}\wtil{v}_k \\
&=\wtil{z}_1\wtil{z}_2\cdots\wtil{z}_{k-1}\wtil{v}_1\wtil{v}_2\cdots\wtil{v}_{k-1}\wtil{v}_k.
\end{align*}
Since $\omega_1,\dots, \omega_{n-1}$ are elements of the Siegel parabolic subgroup $\mpsp{P_n}$ and have determinant 1 on $Y_n$, one has
\begin{align*}
\wtil{v}_i  &= (v_i,1) & \text{for }& 1\leq i \leq k,\\
\wtil{z}_j &= (z_j,1) & \text{for }& 1 \leq j \leq k-1.
\end{align*}

Now let us compute $\wtil{z}_1\wtil{z}_2\cdots\wtil{z}_{k-1}$, $\wtil{v}_1\wtil{v}_2\cdots\wtil{v}_{k-1}\wtil{v}_k$, and $\wtil{w}_P$.
First, we shall consider $\wtil{z}_1\wtil{z}_2\cdots\wtil{z}_{k-1}$.
Since each $z_j$ belongs to the Siegel parabolic subgroup and has determinant $1$ on $Y_n$, we have $\wtil{z}_1\cdots\wtil{z}_{k-1}=(z_1\cdots z_{k-1},1)$.
Additionally, calculating its action on the basis $\{y_1,\dots,y_n,y_1^*,\dots,y_n^* \}$, we can compute the product $z_1\cdots z_{k-1}$:
\begin{equation*}
z_1\cdots z_{k-1}
= m_n(\iota^{\GL}_{0,k,n-k}((-1)^{n+k}J)).
\end{equation*}
Second, by descending induction, we can compute $\wtil{v}_i \cdots \wtil{v}_k$ for $i=k,\dots,1$.
Note that Ranga Rao's normalized cocycle may not be trivial.
By descending induction, one has
\begin{equation*}
v_i\cdots v_k
= \sigma_{\{i,\dots,k\}}p'_i,
\end{equation*}
where
\begin{equation*}
p_i'
= (a_{\{i, \ldots, k\}})^{n-i+1} (a_{\{k+1, \ldots, n\}})^{k-i+1}.
\end{equation*}
We also have
\begin{equation*}
v_i
= p_i\sigma_i,
\end{equation*}
where
\begin{equation*}
p_i
= (a_{\{i\}})^{n-i+1} a_{\{i+1, \ldots, n\}}.
\end{equation*}
Since $p_i'$ and $p_i$ are elements of the Siegel parabolic subgroup, one has
\begin{align*}
c(v_i,v_{i+1}\cdots v_k)
&=c(p_i\sigma_{i},\sigma_{\{i+1,\dots,k\}}p_{i+1}') \\
&=(x(p_i),x(p_{i+1}'))_F \\
&=((-1)^{(n-i+1)+(n-i)},(-1)^{(n-i)(k-i)+(k-i)(n-k)})_F \\
&=(-1,-1)_F^{k+i}.
\end{align*}
Hence,
\begin{align*}
\wtil{v}_1\cdots\wtil{v}_k
&=(v_1\cdots v_k, \prod_{i=1}^{k-1} c(v_i,v_{i+1}\cdots v_k)) \\
&=\left( (\sigma_{\{1, \ldots, k\}})^{2n+1} (a_{\{k+1, \ldots, n\}})^k, (-1,-1)_F^{\frac{k(k-1)}{2}} \right).
\end{align*}

Finally, since $z_1\cdots z_{k-1}$ belongs to the Siegel parabolic subgroup and has determinant $1$ on $Y_n$, we have $\wtil{w}_P= (\wtil{w}_{\mpsp{P}}, \epsilon^\LS)$ and $\wtil{w}_{\mpsp{P}}= z_1\cdots z_{k-1}v_1\cdots v_k  = (-1)^km_n(\iota^{\GL}_{0,k,n-k}(J)) \sigma_{\{1, \ldots, k\}}$.
This is the first assertion of the proposition.
\end{proof}
\subsection{Haar measures}\label{measm}
In order to study the intertwining operators in more detail, or to describe some explicit formulas for the Weil representations, we need to take Haar measures appropriately and explicitly.
Put
\begin{align*}
e     &=x_1\otimes y^*_1+\dots+x_k\otimes y_k^* \in X\otimes Y^*, \\
e^*  &=x^*_1\otimes y_1+\dots+x^*_k\otimes y_k \in X^*\otimes Y, \\
e^{**}&=x^*_1\otimes y^*_1+\dots+x^*_k\otimes y^*_k \in X^*\otimes Y^*.
\end{align*}
These vectors belong to the symplectic space $\W=V \otimes_F W$.

Let us define measures on each groups and vector spaces.
\begin{enumerate}
   \item Take the self-dual Haar measure $d_{\rmM_k}x$ on $\rmM_k(F)$ with respect to the pairing
             \begin{align*}
              \rmM_k(F) \times \rmM_k(F) \ni (x,y)\mapsto \psi(\mathrm{tr}(xy)) \in \C^1.
             \end{align*}
            In particular, let us write $d_\psi x$ when $k=1$.
   \item Take the Haar measure $dx$ on $\GL_k(F)$ defined by $dx = |\det x|_F^{-k} d_{\rmM_k}x$,
             and we transfer it to $\GL(Y)$ and $\GL(X)$ via the identification.
   \item  Define the self-dual Haar measures
             on $V\otimes Y^*$, $X^*\otimes Y$, $X\otimes Y^*$, $V_{n_0}\otimes Y^*$, $X^*\otimes W_{n_0}$,
              $\Hom(V_{n_0},X)$, $\Hom(W_{n_0},Y)$, $\Hom(X,X)$, and $\Hom(Y,Y)$
              in a similar way to \cite[\S 7.2]{gigp}.
   \item  Take the self-dual Haar measures on $\Alt(X^*,X)$ and $\Sym(Y^*,Y)$ with respect to the pairings
              \begin{align*}
               \Alt(X^*,X) \times \Alt(X^*,X) \ni (c,c')
                    &\mapsto \psi(\an{ I_Yce^{**}, I_X\inv c'e^{**} }) \in \C^1,\\
               \Sym(Y^*,Y) \times \Sym(Y^*,Y) \ni (c, c')
                   &\mapsto \psi(\an{ I_Xce^{**}, I_Y\inv c'e^{**} }) \in \C^1,
              \end{align*}
              respectively.
   \item  Take the Haar measures
              $du$ on $U$ for $u=u^\rmb(b)u^\rmc(c)$,
              and $dn$ on $N$ for $n=n^\rmb(b)n^\rmc(c)$,
              as follows:
               \begin{align*}
                du&=|2|_F^{-\frac{k}{2}}db \cdot |2|_F^{-\frac{k(k-1)}{4}}dc,&  &b\in\Hom(V_{n_0},X), c\in\Alt(X^*,X), \\
                dn&=|2|_F^{-\frac{k}{2}}db \cdot |2|_F^{-\frac{k(k-1)}{4}}dc,&  &b\in\Hom(W_{n_0},Y), c\in\Sym(Y^*,Y).
               \end{align*}
   \item  Let us take measures on $Q$ and $\mpsp{P}$.
            For $q=lu \in Q=LU$ and $p=mn \in \mpsp{P}=\mpsp{M} N$, we define
             \begin{align*}
              dq&=dldu& &\text{and}& dp&=dmdn.
             \end{align*}
            We have the modulus functions
             $\delta_Q(l(a)h_0)=|\det a|_F^{2\rho_Q}$ for $a\in \GL(X), h_0\in \SO(V_{n_0})$,
            and $\delta_P(m(a)g_0)=|\det a|_F^{2\rho_P}$ for $a\in \GL(Y), g_0\in \Sp(W_{n_0})$.
\end{enumerate}
One can then check that the measures $du$ on $U$ and $dn$ on $N$ coincide with the Haar measures that we took in \S\ref{measu} by using the splittings $\splv$ and $\splw$, respectively.
(See \cite[\S6.3]{ato} for explicit calculation.)
\subsection{Big symplectic spaces and a mixed model}\label{mixmodel}
In this subsection, we shall take a mixed model, which is a realization of the Weil representation, following Gan-Ichino \cite[\S7.4]{gigp}.

Put $\W_0=V\otimes W_{n_0}\subset \W$ and $\W_{00}=V_{n_0}\otimes W_{n_0} \subset \W_0 \subset \W$.
These are symplectic subspaces of $\W$.
Fix a polarization $W_{n_0}=W_{01}\oplus W_{02}$, where $W_{01} = \myspan_F(y_{k+1}, \ldots, y_n)$ and $W_{02} = \myspan_F(y^*_{k+1}, \ldots, y_n^*)$.
We have the following natural complete polarizations of $\W$, $\W_0$, and $\W_{00}$:
\begin{align*}
\W      &= (V\otimes Y_n)  \oplus (V\otimes Y^*_n), \\
\W_0   &= (V\otimes W_{01})  \oplus (V\otimes W_{02}), \\
\W_{00}&= (V_{n_0}\otimes W_{01})  \oplus (V_{n_0}\otimes W_{02}).
\end{align*}
Let $\omega$, $\omega_{0}$, and $\omega_{00}$ be the realizations of the Weil representations $\omega_{V,W,\psi}$, $\omega_{V, W_{n_0}, \psi}$, and $\omega_{V_{n_0}, W_{n_0}, \psi}$ of $\Or(V)\times\Mp(W)$, $\Or(V)\times \Mp(W_{n_0})$ and $\Or(V_{n_0})\times \Mp(W_{n_0})$, respectively, on a mixed Schr\"odinger model
\begin{align*}
\calS_{00}&=\calS(V_{n_0}\otimes W_{02}), \\
\calS_0&= \calS(X^*\otimes W_{0})\otimes \calS_{00}, \\
\calS&= \calS(V\otimes Y^*)\otimes\calS_0,
\end{align*}
as in \cite[\S7.4]{gigp} or \cite[\S6.2]{ato}.
We construct these models by using the following elements:
\begin{itemize}
    \item the ordinary Schr\"odinger models
             \begin{align*}
              &(\omega^{\mathrm{or}}, \calS^{\mathrm{or}}=\calS(V\otimes(Y^*\oplus W_{02}))),&
               &(\omega_0^{\mathrm{or}}, \calS_0^{\mathrm{or}}=\calS(V\otimes W_{02})),&
                &(\omega_{00}^{\mathrm{or}}, \calS_{00}^{\mathrm{or}}=\calS(V_{n_0}\otimes W_{02})),&
             \end{align*}
             of $\omega_{V,W,\psi}$, $\omega_{V, W_{n_0}, \psi}$, and $\omega_{V_{n_0}, W_{n_0}, \psi}$, respectively;
    \item canonical linear isomorphisms
             \begin{align*}
              \calS(V\otimes (Y^*\oplus W_{02})) &\cong \calS(V\otimes Y^*)\otimes \calS(V\otimes W_{02}), \\
              \calS(V\otimes W_{02}) &\cong \calS((X\oplus X^*)\otimes W_{02})\otimes \calS(V_{n_0}\otimes W_{02});
             \end{align*}
    \item an isomorphism given by the partial inverse Fourier transform
             \begin{align*}
              \calS((X\oplus X^*)\otimes W_{02}) \to \calS(X^*\otimes W_{0}), \quad
              \varphi  \mapsto  \hat{\varphi}
             \end{align*}
            defined by
             \begin{align*}
              \hat{\varphi}\left(\begin{array}{c}x_1\\x_2\end{array}\right)
               &=\int_{y\in X\otimes W_{02}}
                  \varphi\left(\begin{array}{c}y\\x_2\end{array}\right)\psi(-\an{ x_1,y })dy &
                   \text{for } x_1\in X^*\otimes W_{01},\ x_2\in X^*\otimes W_{02},
             \end{align*}
             where the Haar measure $dy$ on $X \otimes W_{02}$ is defined by
             \begin{equation*}
             dy = \prod_{\substack{1\leq i \leq k \\ k+1\leq j \leq n}} d_\psi c_{i,j},
             \quad
             \text{ for }
             \quad
             y = \sum_{\substack{1\leq i \leq k \\ k+1\leq j \leq n}} c_{i,j} x_i \otimes y_j^* \in X \otimes W_{02}.
             \end{equation*}
\end{itemize}
Let $\scrH_0 = \W_0 \oplus F$ and $\scrH_{00} = \W_{00} \oplus F$ be the Heisenberg groups.
Let $\rho_0$ and $\rho_{00}$ be their Heisenberg representations associated with the Weil representations $(\omega_0, \calS_0)$ and $(\omega_{00}, \calS_{00})$, respectively.
We consider $\Sp(W) = \Sp(W)\times \{1\} \subset \Sp(W)\times\{\pm1\} = \Mp(W)$ as sets.
Referring to \cite{rao} or \cite[Theorem 3.1]{spl}, we obtain some explicit formulas for the Weil representations.

For $\varphi\in\calS$ and $x\in V\otimes Y^*$,
\begin{align*}
  [\omega(h)\varphi](x)
    &=\omega_0(h)\varphi(h^{-1}x), &                              h&\in \SO(V), \\
  [\omega(m(a))\varphi](x)
    &=\gamma_F(\det a,\psi)^{-1}|\det a|_F^{\frac{2n+1}{2}}\varphi(a^*x), &           a&\in \GL(Y), \\
  [\omega(g_0)\varphi](x)
    &=\omega_0(g_0)\varphi(x), &                                g_0&\in \Sp(W_{n_0}), \\
  [\omega(n^{\mathrm{b}}(b))\varphi](x)
    &=\rho_0((b^*x,0))\varphi(x), &                                    b&\in\Hom(W_{n_0},Y), \\
  [\omega(n^\mathrm{c}(c))\varphi](x)
    &=\psi\left(\tfrac{1}{2}\langle n^\mathrm{c}(c)x,x\rangle\right)\varphi(x), &                  c&\in\Sym(Y^*,Y),\\
  [\omega(w_Y^{-1})\varphi](x)
    &=\gamma_F(\psi\circ q_V)^{-k}\omega_0((-1_{W_{n_0}})^k)\int_{Y^*\otimes V}\psi(\langle x',I_Yx\rangle)\varphi(x')dx'.
\end{align*}

For $\varphi_0\in\calS_0=\calS(X^*\otimes W_{n_0})\otimes\calS_{00}$ and $\ y\in X^*\otimes W_{n_0}$,
\begin{align*}
    [\omega_0(g_0)\varphi_0](y)
      &=\omega_{00}(g_0)\varphi_0(g_0^{-1}y), &                              g_0&\in \Sp(W_{n_0}), \\
    [\omega_0(l(a))\varphi_0](y)
      &=|\det a|_F^{n-k}\varphi_0(a^*y), &                                      a&\in \GL(X), \\
    [\omega_0(h_0)\varphi_0](y)
      &=\omega_{00}(h_0)\varphi_0(y), &                                   h_0&\in \SO(V_{n_0}), \\
    [\omega_0(u^\mathrm{b}(b))\varphi_0](y)
      &=\rho_{00}((b^*y,0))\varphi_0(y), &                                       b&\in\Hom(V_{n_0},X), \\
    [\omega_0(u^\mathrm{c}(c))\varphi_0](y)
      &= \psi(\tfrac{1}{2}\langle u^\mathrm{c}(c)y,y\rangle)\varphi_0(y), &               c&\in\Alt(X^*,X), \\
    [\omega_0(w_X)\varphi_0](y)
      &=\omega_{00}((-1_{V_{n_0}})^k) \int_{X^*\otimes W_{n_0}}\psi(-\langle y',I_X y\rangle)\varphi_0(y')dy'.
\end{align*}

For $\varphi_{00}\in\calS_{00}=\calS(V_{n_0}\otimes W_{02})$ and $x\in V_{n_0}\otimes W_{02}$,
\begin{align*}
    [\omega_{00}((-1_{W_{n_0}})^k)\varphi_{00}] (x)
      &=\gamma_F((-1)^{k(n-k)},\psi)^{-1}\varphi_{00}((-1)^kx), \\
    [\omega_{00}((-1_{V_{n_0}})^k)\varphi_{00}](x)
      &=\varphi_{00}((-1)^kx).
\end{align*}
\subsection{Gan-Ichino's equivariant maps}\label{eqmap}
Next, we shall construct equivariant maps which realize the theta correspondence.
Put
\begin{align*}
f_\calS(\varphi)(gh)
&=\left[[\omega(gh)\varphi](e)\right](0), \\
\hat{f}_\calS(\varphi)(gh)
&=\left[\int_{X\otimes Y^*}[\omega(gh)\varphi](x)\psi(-\langle e^*,x\rangle)dx\right](0),
\end{align*}
for $\varphi \in \calS =\calS(V\otimes Y^*) \otimes \calS(X^*\otimes W_{n_0}) \otimes \calS_{00}$, $g \in \Mp(W)$, and $h \in \Or(V)$.
If $f=f_\calS(\varphi)$ or $\hat{f}_\calS(\varphi)$, then by the explicit formulas of the mixed Schr\"odinger model, we have
\begin{align}\label{formulaf}
f(nugh)&=f(gh),& &n\in N, u\in U, \notag\\
f(g_0h_0gh)&=\omega_{00}(g_0h_0)f(gh),& &g_0 \in \Sp(W_{n_0}), h_0 \in \Or(V), \notag\\
f(m(a)l(a)gh)&=\gf(\det a,\psi)\inv|\det a|_F^{\rho_Q+\rho_P}f(gh),& &a\in \GL_k(F)\cong \GL(X)\cong \GL(Y),
\end{align}
for any $g\in \Mp(W),$ and $h\in \Or(V)$.
In the rest of this section, we shall drop the subscript $\calS$ for simplicity.

In this subsection, we shall write $\tau=\tau_1$ and assume that $\sigma_0$ and $\pi_0$ may be direct sums of irreducible tempered representations, whose summands have a same $L$-parameter $\phi_0$ and correspond bijectively via $\Theta_\psi$.
For $\rho=\tau, \pi_0, \sigma_0$, let $(\rho^\vee, \scrV_{\rho^\vee})$ be the contragredient representation of $(\rho, \scrV_\rho)$, and $\an{-,-}$ the invariant non-degenerate bilinear form on $\scrV_\rho \times \scrV_{\rho^\vee}$.
We fix a non-zero $\Mp(W_{n_0}) \times \SO(V_{n_0})$-equivariant map
\begin{equation}\label{00}
\calT_{00} : \omega_{00} \otimes \sigma_0^\vee \lra \pi_0.
\end{equation}
For any $\varphi \in \calS$, $\scrF_s\in \Ind_Q^{\SO(V)}(\tau_s \otimes \sigma_0^\vee)$, $g\in \Mp(W)$, $\cv \in \scrV_{\tau^\vee}$, and $\cv_0 \in \scrV_{\pi_0^\vee}$, put
\begin{equation*}
I(s,\varphi\otimes \scrF_s, \cv\otimes\cv_0, g)
=\frac{1}{L(s+\tfrac{1}{2},\tau)}
         \int_{U\SO(V_{n_0})\backslash \SO(V)}
          \an{ \calT_{00}(\hat{f}(\varphi)(gh)\otimes \an{ \scrF_s(h), \cv }), \cv_0 }
         dh,
\end{equation*}
if the right hand side converges absolutely.
Here, $s \in \C$ is a complex variable.
\begin{lem}\label{deft}
We have the following.
\begin{enumerate}[(1)]
  \item The integral $I(s,\varphi\otimes \scrF_s, \cv\otimes\cv_0, g)$ converges absolutely for $\myre(s)>-\frac{1}{2}$, and admits a holomorphic continuation to $s \in \C$.
  \item For $\myre(s)<\frac{1}{2}$, we have that $I(s,\varphi\otimes \scrF_s, \cv\otimes\cv_0, g)$ is equal to
           \begin{equation*}
            L(s+\frac{1}{2},\tau)^{-1} \gamma(s+\frac{1}{2},\tau,\psi)^{-1}
             \int_{U\SO(V_{n_0})\backslash \SO(V)}
                 \an{ \calT_{00}(f(\varphi)(gh)\otimes \an{ \scrF_s(h), \cv }), \cv_0 } dh.
           \end{equation*}
   \item By virtue of (1), we define a vector $\calT_s( \varphi \otimes \scrF_s)(g)$ of $\scrV_\tau \otimes \scrV_{\pi_0}$ by
           \begin{equation*}
            \an{ \calT_s( \varphi \otimes \scrF_s)(g), \cv \otimes \cv_0 }
             =I(s, \varphi \otimes \scrF_s, \cv \otimes \cv_0, g).
           \end{equation*}
           Then for any $0 \neq \scrF \in \Ind_Q^{\SO(V)}(\tau \otimes \sigma_0^\vee)$, there exists $\varphi \in \calS$ such that
            \begin{equation*}
             \calT_{s=0}(\varphi \otimes \scrF) \neq0.
            \end{equation*}
\end{enumerate}
\end{lem}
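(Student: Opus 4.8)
The plan is to treat the three assertions in order, since (2) and (3) both rest on the absolute convergence established in (1). For part (1), I would analyze the integral over $U\SO(V_{n_0})\backslash \SO(V)$ by choosing a set of representatives adapted to the Iwasawa-type decomposition of $\SO(V)$ with respect to the maximal parabolic $Q = Q_k$, writing $h$ in terms of a radial $\GL_k$-part $a$, a compact part, and the $\SO(V_{n_0})$-part which is quotiented out. Using the equivariance formula \eqref{formulaf} for $\hat{f}(\varphi)$ under $m(a)l(a)$, together with the known behavior of $\scrF_s$ under $Q$ (which contributes $\delta_Q(l(a))^{1/2}|\det a|_F^s$) and the temperedness of $\tau$, $\sigma_0$, $\pi_0$ (which bounds matrix coefficients and $\calT_{00}$), one reduces to an explicit integral over the radial torus of a product of absolute values raised to powers that are affine-linear in $\myre(s)$. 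The exponent coming from $\hat{f}$ involves $\rho_Q + \rho_P$ while $L(s+\tfrac12,\tau)^{-1}$ cancels the polar contribution at $s = -\tfrac12$; a careful bookkeeping of these exponents against $\rho_Q$ should yield convergence exactly for $\myre(s) > -\tfrac12$. Holomorphic continuation to all of $\C$ then follows either by the standard Langlands–Shahidi argument (writing the integral as a finite sum of zeta-type integrals with a gamma-factor functional equation) or by invoking the already-known meromorphy of the doubling/theta zeta integrals in \cite{gigp}, \cite{ato}; the $L$-factor normalization is precisely what removes the pole.

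For part (2), the key is the relation between $f(\varphi)$ and $\hat{f}(\varphi)$: by definition $\hat{f}(\varphi)$ is the partial Fourier transform of $\omega(gh)\varphi$ along $X \otimes Y^*$ evaluated against $\psi(-\an{e^*,-})$, while $f(\varphi)$ is the evaluation at $e$. The standard local functional equation for the $\GL_k$-zeta integral attached to $\tau$ (Godement–Jacquet, or Shahidi's normalization \cite{sha}) converts the integral built from $\hat{f}$ into the one built from $f$ at the cost of the factor $\gamma(s+\tfrac12,\tau,\psi)^{-1}$, valid in the overlapping strip $-\tfrac12 < \myre(s) < \tfrac12$. Concretely, I would insert the Fourier-transform definition of $\hat{f}$ into $I(s,\dots)$, interchange the order of integration (justified by absolute convergence in a suitable sub-strip), recognize the inner $X\otimes Y^*$-integral together with the radial $\GL_k$-integral as a Tate/Godement–Jacquet integral for $\tau_s$, apply its functional equation, and reassemble. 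Both sides are holomorphic on the indicated strip by (1), so the identity propagates by analytic continuation.

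For part (3), surjectivity-type nonvanishing at $s=0$, I would argue as in \cite[\S7--8]{gigp} and \cite{ato}: the map $\varphi \mapsto \calT_{s}(\varphi \otimes \scrF_s)$ is, up to the chosen normalizations, the composition of the standard theta-lift integral with the fixed nonzero equivariant map $\calT_{00}$ of \eqref{00}. Since $\sigma_0$ and $\pi_0$ correspond under $\Theta_\psi$, the Heisenberg–Weil model computation shows the theta integral does not vanish identically; the point is then to show the value at $s=0$ is nonzero for some $\varphi$ depending on the given $\scrF$. I would do this by choosing $\varphi$ supported near the ``open orbit'' so that $f(\varphi)(gh)$ localizes the $h$-integral to a neighborhood where $\an{\scrF(h),\cv}$ can be made nonzero, then using the nondegeneracy of the pairings $\an{-,-}$ on $\scrV_\tau \times \scrV_{\tau^\vee}$ and $\scrV_{\pi_0}\times\scrV_{\pi_0^\vee}$ together with the nonvanishing of $\calT_{00}$. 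The main obstacle I anticipate is precisely this last step: ensuring that after the $L$-factor renormalization the value survives at the single point $s=0$ rather than vanishing there, which requires knowing that $L(s+\tfrac12,\tau)^{-1}$ does not introduce a spurious zero at $s=0$ — this is where temperedness of $\tau$ (so that $L(\tfrac12,\tau)$ is finite and nonzero) is used, and where I would lean most heavily on the parallel arguments already carried out in \cite{gigp} and \cite{ato}.
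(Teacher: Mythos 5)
Your proposal is correct and follows essentially the same route as the paper, whose entire proof is the remark that the argument is the same as Lemmas 8.1, 8.2, and 8.3 of \cite{gigp}: convergence and continuation by Iwasawa-decomposition exponent estimates using temperedness, the identity in (2) via the partial Fourier transform relating $\hat{f}(\varphi)$ to $f(\varphi)$ and the resulting zeta-integral functional equation producing $\gamma(s+\tfrac{1}{2},\tau,\psi)$, and nonvanishing at $s=0$ by a suitable choice of $\varphi$ combined with the nonvanishing of $\calT_{00}$ and the fact that $L(\tfrac{1}{2},\tau)$ is finite and nonzero for tempered $\tau$. In short, you have supplied the details (as in \cite{gigp} and \cite{ato}) that the paper itself omits by citation.
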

\begin{proof}
The proof is similar to those of Lemmas 8.1, 8.2, and 8.3 in \cite{gigp}.
\end{proof}
Now, when $s=0$ the assignment $\varphi \otimes \scrF \mapsto \calT_0( \varphi \otimes \scrF)$ gives an $\Mp(W) \times \SO(V)$-equivariant map $\omega \otimes \Ind_Q^{\SO(V)}(\tau \otimes \sigma_0^\vee) \to \Ind_P^{\Mp(W)}(\wtil{\tau}\otimes\pi_0)$.
We shall write $\calT(k, \calT_{00})$ for this map.\\

Now, we note the functorialities of the equivariant map $\calT(k,\calT_{00})$ here.
We have the following two lemmas, which easily follow from the definition of $\calT(k,\calT_{00})$.
\begin{lem}\label{adjacent}
Let $(\tau', \scrV_{\tau'})$ be a representation of $\GL_k$ that is isomorphic to $\tau$, and $A : (\tau, \scrV_\tau) \to (\tau', \scrV_{\tau'})$ an isomorphism of representations of $\GL_k$.
Then the diagram
\begin{align*}
\begin{CD}
\omega\otimes\Ind_Q^{\SO(V)}(\tau\otimes\sigma_0^\vee) @>\calT(k,\calT_{00})>> \Ind_P^{\Mp(W)}(\wtil{\tau}\otimes\pi_0)\\
 @V1\otimes \Ind(A)VV                                                      @VV\Ind(A)V\\
 \omega\otimes\Ind_Q^{\SO(V)}(\tau'\otimes\sigma_0^\vee) @>>\calT(k,\calT_{00})> \Ind_P^{\Mp(W)}(\wtil{\tau'}\otimes\pi_0)
\end{CD}
\end{align*}
commutes.
Here, $\Ind(A)$ denotes an operator defined by $[\Ind(A)\scrF](x) = A(\scrF(x))$.
\end{lem}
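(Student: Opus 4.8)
The plan is to read the commutativity off the definition of the map $\calT(k,\calT_{00})$ in Lemma \ref{deft}, reducing it to an identity of vectors in $\scrV_{\tau'}\otimes\scrV_{\pi_0}$ that one tests against arbitrary functionals $\cv'\otimes\cv_0$. First I would note that $\Ind(A)$ is legitimate on both sides of the square: the isomorphism $A\colon\scrV_\tau\to\scrV_{\tau'}$ intertwines the $\GL_k$-actions, hence the actions of the Levi subgroups $L_k$ and $M_k$, and since $\wtil\tau=\tau\otimes\chi_\psi$ and $\wtil{\tau'}=\tau'\otimes\chi_\psi$ are built on the same underlying spaces with the same twisting character, the map $A$ is at the same time an $\ML_k$-isomorphism $\wtil\tau\to\wtil{\tau'}$. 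It therefore suffices to prove $\calT_{s=0}(\varphi\otimes\Ind(A)\scrF)(g)=(A\otimes 1_{\scrV_{\pi_0}})\calT_{s=0}(\varphi\otimes\scrF)(g)$ for all $\varphi\in\calS$, all $\scrF\in\Ind_Q^{\SO(V)}(\tau\otimes\sigma_0^\vee)$, and all $g\in\Mp(W)$, the right-hand side being precisely $[\Ind(A)\calT_{s=0}(\varphi\otimes\scrF)](g)$.

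For the computation, fix $\cv'\in\scrV_{{\tau'}^\vee}$ and $\cv_0\in\scrV_{\pi_0^\vee}$, and let $A^t\colon\scrV_{{\tau'}^\vee}\to\scrV_{\tau^\vee}$ be the transpose of $A$, characterized by $\an{Av,\cv'}=\an{v,A^t\cv'}$. From $[\Ind(A)\scrF](h)=(A\otimes 1)(\scrF(h))$ one obtains $\an{[\Ind(A)\scrF](h),\cv'}=\an{\scrF(h),A^t\cv'}$ in $\scrV_{\sigma_0^\vee}$ for every $h\in\SO(V)$, so that the integrand defining $I(0,\varphi\otimes\Ind(A)\scrF,\cv'\otimes\cv_0,g)$ coincides pointwise with the one defining $I(0,\varphi\otimes\scrF,(A^t\cv')\otimes\cv_0,g)$. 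The only possible discrepancy lies in the normalizing constant, $L(\tfrac{1}{2},\tau')^{-1}$ in the first case and $L(\tfrac{1}{2},\tau)^{-1}$ in the second, and these agree because $\tau\cong\tau'$. Hence $I(0,\varphi\otimes\Ind(A)\scrF,\cv'\otimes\cv_0,g)=I(0,\varphi\otimes\scrF,(A^t\cv')\otimes\cv_0,g)$, which by Lemma \ref{deft}(3) says $\an{\calT_{s=0}(\varphi\otimes\Ind(A)\scrF)(g),\cv'\otimes\cv_0}=\an{\calT_{s=0}(\varphi\otimes\scrF)(g),(A^t\cv')\otimes\cv_0}=\an{(A\otimes 1)\calT_{s=0}(\varphi\otimes\scrF)(g),\cv'\otimes\cv_0}$. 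As $\cv'$ and $\cv_0$ range over all functionals, the two vectors are equal, which is the asserted commutativity.

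There is no real obstacle here; the whole content is bookkeeping with the contragredient pairing. The one thing worth stating explicitly is that the normalization of $\calT(k,\calT_{00})$ depends on $\tau$ only through its isomorphism class: the factor $L(s+\tfrac{1}{2},\tau)$ built into the definition of $I$, as well as the factor $\gamma(s+\tfrac{1}{2},\tau,\psi)$ that appears if one rewrites the integral through $f$ rather than $\hat f$ as in Lemma \ref{deft}(2), depend only on the $L$-parameter of $\tau$ and are therefore unaffected by replacing $\tau$ with $\tau'$. One may equally keep $s$ as a free parameter throughout the computation, which yields the analogous commuting square for $\calT_s$ before specialization to $s=0$.
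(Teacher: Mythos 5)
Your proposal is correct and follows the same route the paper intends: the paper dismisses Lemma \ref{adjacent} as following easily from the definition of $\calT(k,\calT_{00})$, and your argument is precisely that unwinding — testing $\calT_s$ against $\cv'\otimes\cv_0$, transferring $A$ to its transpose $A^t$ on ${\scrV_{{\tau'}^\vee}}$, and noting that the normalizing factors $L(s+\tfrac{1}{2},\tau)$ and $\gamma(s+\tfrac{1}{2},\tau,\psi)$ depend only on the isomorphism class of $\tau$, with the observation that $A$ is simultaneously an $\ML_k$-isomorphism $\wtil{\tau}\to\wtil{\tau'}$ since both are twists by the same character $\chi_\psi$. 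No gaps; nothing further is needed.
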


\begin{lem}\label{reflection}
Let $(\sigma_0', \scrV_{\sigma_0'})$ (resp. $(\pi_0', \scrV_{\pi_0'})$) be a representation of $\SO(V_{n_0})$ (resp. $\Mp(W_{n_0})$) that is isomorphic to $\sigma_0$ (resp. $\pi_0$), and $B : (\sigma_0^\vee, \scrV_{\sigma_0^\vee}) \to ({\sigma_0'}^\vee, \scrV_{{\sigma_0'}^\vee})$ (resp. $C : (\pi_0, \scrV_{\pi_0}) \to (\pi_0', \scrV_{\pi_0'}$)) an isomorphism.
Choose an $\Mp(W_{n_0}) \times \SO(V_{n_0})$-equivariant map $\calT_{00}' : \omega_{00} \otimes {\sigma_0'}^\vee \to \pi_0$ such that the diagram
\begin{align*}
\begin{CD}
\omega_{00}\otimes\sigma_0^\vee @>\calT_{00}>> \pi_0 \\
 @V1\otimes BVV                                  @VVCV \\
\omega_{00}\otimes{\sigma_0'}^\vee @>>\calT_{00}'> \pi_0'
\end{CD}
\end{align*}
commutes.
Then the diagram
\begin{align*}
\begin{CD}
\omega\otimes\Ind_Q^{\SO(V)}(\tau\otimes\sigma_0^\vee) @>\calT(k,\calT_{00})>> \Ind_P^{\Mp(W)}(\wtil{\tau}\otimes\pi_0)\\
 @V1\otimes \Ind(B)VV                                                      @VV\Ind(C)V\\
 \omega\otimes\Ind_Q^{\SO(V)}(\tau\otimes{\sigma_0'}^\vee) @>>\calT(k,\calT_{00}')> \Ind_P^{\Mp(W)}(\wtil{\tau}\otimes\pi_0')
\end{CD}
\end{align*}
also commutes.\\
\end{lem}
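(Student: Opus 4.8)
The plan is to prove the commutativity of the second square by simply unwinding the definition of $\calT(k,\calT_{00})$, observing that the isomorphisms $B$ and $C$ act only on the "small" factors $\sigma_0^\vee$ and $\pi_0$ and therefore pass inertly through the Weil representation datum $\hat f(\varphi)$ and the $\GL_k$-factor $\tau$. Fix $g\in\Mp(W)$, $\cv\in\scrV_{\tau^\vee}$, and $\cv_0'\in\scrV_{(\pi_0')^\vee}$; by non-degeneracy of the invariant pairings it suffices to show that
\[
\an{\calT(k,\calT_{00}')(\varphi\otimes\Ind(B)\scrF)(g),\cv\otimes\cv_0'}
=\an{\Ind(C)\bigl(\calT(k,\calT_{00})(\varphi\otimes\scrF)\bigr)(g),\cv\otimes\cv_0'}.
\]
I would work at $s=0$ throughout, where by Lemma \ref{deft}(1) the defining integral $I(0,-)$ converges absolutely, so that no meromorphic continuation is needed and $\calT(k,\calT_{00})=\calT_{0}$ is given directly by $\an{\calT_{0}(\varphi\otimes\scrF)(g),\cv\otimes\cv_0}=I(0,\varphi\otimes\scrF,\cv\otimes\cv_0,g)$.

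The identities to record, all immediate from the functoriality of $\otimes$ and the hypothesis that the first square commutes, are: (i) $\an{(\Ind(B)\scrF)(h),\cv}=B\bigl(\an{\scrF(h),\cv}\bigr)$ as elements of $\scrV_{(\sigma_0')^\vee}$, since contracting the $\scrV_\tau$-slot against $\cv$ commutes with applying $B$ in the $\scrV_{\sigma_0^\vee}$-slot; (ii) consequently $\hat f(\varphi)(gh)\otimes\an{(\Ind(B)\scrF)(h),\cv}=(1_{\calS_{00}}\otimes B)\bigl(\hat f(\varphi)(gh)\otimes\an{\scrF(h),\cv}\bigr)$; (iii) $\calT_{00}'\circ(1\otimes B)=C\circ\calT_{00}$, so applying $\calT_{00}'$ to the expression in (ii) yields $C\bigl(\calT_{00}(\hat f(\varphi)(gh)\otimes\an{\scrF(h),\cv})\bigr)$; and (iv) $\an{C(v),\cv_0'}=\an{v,\tp{C}\cv_0'}$ for $v\in\scrV_{\pi_0}$, where $\tp{C}\colon\scrV_{(\pi_0')^\vee}\to\scrV_{\pi_0^\vee}$ is the transpose of $C$, and likewise $\an{(1_{\scrV_\tau}\otimes C)(u),\cv\otimes\cv_0'}=\an{u,\cv\otimes\tp{C}\cv_0'}$. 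Substituting (i)--(iv) into the integrand of $I(0,\varphi\otimes\Ind(B)\scrF,\cv\otimes\cv_0',g)$ turns it, term by term, into the integrand of $I(0,\varphi\otimes\scrF,\cv\otimes\tp{C}\cv_0',g)$; since the normalizing factor $L(\tfrac12,\tau)$ is the same on both sides, the two integrals agree. Reading this off the definitions of $\calT(k,\calT_{00}')$ and $\calT(k,\calT_{00})$ and then using (iv) once more on the $\pi_0$-slot gives exactly $\an{\calT(k,\calT_{00}')(\varphi\otimes\Ind(B)\scrF)(g),\cv\otimes\cv_0'}=\an{\calT(k,\calT_{00})(\varphi\otimes\scrF)(g),\cv\otimes\tp{C}\cv_0'}=\an{\Ind(C)(\calT(k,\calT_{00})(\varphi\otimes\scrF))(g),\cv\otimes\cv_0'}$, which is the desired equality.

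There is no genuine obstacle: the only points that demand care are keeping the variances straight (that $B$ post-composes on the $\sigma_0^\vee$-slot while it is $\tp{C}$, not $C$, that appears on the dual side of $\pi_0$) and noting that neither $B$ nor $C$ touches $\hat f(\varphi)$ or the $\GL_k$-variable, so these factors play no role in the computation; if one prefers the alternative formula of Lemma \ref{deft}(2) in terms of $f(\varphi)$ the manipulation is word-for-word the same. Finally, Lemma \ref{adjacent} is proved by the identical unwinding with the roles of the $\GL_k$-factor $\tau$ and the factors $\sigma_0$, $\pi_0$ interchanged: there the isomorphism $A$ is the quantity contracted against $\cv$, and its equivariance is transported through $\Ind(A)$ on source and target by the same bookkeeping.
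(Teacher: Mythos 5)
Your proposal is correct and is exactly the argument the paper intends: the paper offers no written proof beyond the remark that the lemma ``easily follows from the definition of $\calT(k,\calT_{00})$'', and your computation is precisely that unwinding, carried out at $s=0$ where the defining integral converges, with the variance bookkeeping ($B$ acting on the $\sigma_0^\vee$-slot after contraction against $\cv$, and $\tp{C}$ appearing on the dual side of $\pi_0$) handled correctly. No gaps.
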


Finally, we remark a key property of the assignment $\calT_s$.
\begin{prop}\label{comm}
For $\varphi \in \calS$ and $\scrF_s \in \Ind_Q^{\SO(V)}(\tau_s \otimes \sigma_0^\vee)$, we have
\begin{equation*}
\calR_P(w_M, \wtil{\tau}_s \otimes \pi_0) \calT_s( \varphi \otimes \scrF_s) 
= \beta(s) \cdot \calT_{-s}( \varphi \otimes \calR_Q(w_L, \tau_s \otimes \sigma_0^\vee)\scrF_s),
\end{equation*}
where
\begin{equation*}
\beta(s)
=|2|_F^{2ks} 
  \cdot \frac{L(-s+\tfrac{1}{2},\tau^\vee)}{L(s+\tfrac{1}{2}, \tau)}
    \cdot  \frac{\gamma(-s+\tfrac{1}{2}, \tau^\vee,\psi)}{\gamma(s+\tfrac{1}{2},\tau, \psi)}.
\end{equation*}
\end{prop}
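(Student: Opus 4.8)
The plan is to reduce Proposition \ref{comm} to an identity between the \emph{unnormalized} intertwining operators, to prove that identity by a direct computation in the mixed Schr\"odinger model of \S\ref{mixmodel}, and then to reinsert the normalizing factors. The key structural observation is that the Langlands dual groups of $\Mp(W)$ and of $\SO(V)$ are both $\Sp_{2n}(\C)$, and that the parabolic subgroups $P=P_k$ and $Q=Q_k$ correspond to one and the same standard maximal parabolic of $\Sp_{2n}(\C)$ with Levi $\hat{M}=\hat{L}=\GL_k(\C)\times\Sp_{2n_0}(\C)$; consequently $\rho_{P^w|P}=\rho_{Q^w|Q}$, and the normalizing factors $r_{P^w|P}(\bfs,\phi,\psi)$ and $r_{Q^w|Q}(\bfs,\phi,\psi)$ are equal and cancel. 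Taking $m=1$ and $w=w_M$, so that $\dim y(w_M,\phi)=k$, $y(w_M,\bfs)=s$ and $\gamma(\tfrac{1}{2},y(w_M,\phi_\bfs),\psi)=\gamma(s+\tfrac{1}{2},\tau,\psi)$ by compatibility of $\gamma$-factors with LLC, and using that every irreducible representation of $\SO(V_{n_0})$ is self-dual, one unwinds the definitions of $\calR_P$ and $\calR_Q$ and finds that Proposition \ref{comm} is equivalent to
\begin{equation*}
\calM(\wtil{w}_P,\wtil{\tau}_s\otimes\pi_0)\,\calT_s(\varphi\otimes\scrF_s)
=\epsilon(V)^{k}\,\gf(\psi)^{-k}\,|2|_F^{2(k-1)s}\,\epsilon(\tfrac{1}{2}-s,\tau^\vee,\psi)\,\calT_{-s}\bigl(\varphi\otimes\calM(\wtil{w}_Q,\tau_s\otimes\sigma_0^\vee)\scrF_s\bigr),
\end{equation*}
where $\epsilon(\tfrac{1}{2}-s,\tau^\vee,\psi)$ is the $\GL_k$ $\varepsilon$-factor, i.e.\ the scalar with $\gamma(\tfrac{1}{2}-s,\tau^\vee,\psi)=\epsilon(\tfrac{1}{2}-s,\tau^\vee,\psi)\,L(s+\tfrac{1}{2},\tau)/L(\tfrac{1}{2}-s,\tau^\vee)$; multiplying through by the remaining scalars of $\calR_P$ and $\calR_Q$ and using this functional equation recovers Proposition \ref{comm} with the stated $\beta(s)$.

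For the central identity I would work first in a region of $s$ of absolute convergence (legitimate by Lemma \ref{deft}(1) together with meromorphic continuation) and unfold $\calT_s(\varphi\otimes\scrF_s)$ via its integral representation $I(s,\varphi\otimes\scrF_s,\cv\otimes\cv_0,g)$. Since $P$, $Q$ are maximal and $w_M$, $w_L$ are the nontrivial Weyl elements, the intertwining integrals $\calM(\wtil{w}_P,\cdot)$ and $\calM(\wtil{w}_Q,\cdot)$ are over all of $N$ and all of $U$; after interchanging the $N$-integral with the integral over $U\SO(V_{n_0})\backslash\SO(V)$, substitute $\wtil{w}_P=w_M'$ and $\wtil{w}_Q=w_L'$ from Proposition \ref{LS} and apply the explicit formulas of \S\ref{mixmodel} --- the Fourier-transform formula for $\omega(w_Y\inv)$, the Heisenberg action $\omega(n^\rmb(b))$, the quadratic character $\omega(n^\rmc(c))$, the operator $\omega_0(w_X)$ --- together with the equivariance properties \eqref{formulaf} of $f$ and $\hat{f}$ and the measure normalizations of \S\ref{measm} (notably $dn=|2|_F^{-k/2}d'n$ and the $|2|_F$-twisted self-dual measures on $\Sym(Y^*,Y)$, $\Alt(X^*,X)$, $X\otimes W_{02}$). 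The effect should be to turn the $N$-integral on the metaplectic side into the $U$-integral on the orthogonal side, producing $\calM(\wtil{w}_Q,\tau_s\otimes\sigma_0^\vee)\scrF_s$ inside $\calT_{-s}$, at the cost of: (i) a factor $\gf(\psi\circ q_V)^{-k}$, which by $\gf(\psi\circ(q_1\dot{+}q_2))=\gf(\psi\circ q_1)\gf(\psi\circ q_2)$ and $\disc V=1$ equals $\epsilon(V)^{k}$ times a scalar depending only on $\psi$ (the sign being the Hasse invariant, which distinguishes $V^+$ from $V^-$); (ii) explicit powers of $|2|_F$ from the measures and changes of variable; (iii) a residual one-variable integral over $\GL(Y)\cong\GL_k$.

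The residual $\GL_k$-integral, once $f(m(a)l(a)gh)=\gf(\det a,\psi)\inv|\det a|_F^{\rho_Q+\rho_P}f(gh)$ from \eqref{formulaf} has been incorporated, is a local zeta integral of Godement--Jacquet/Tate type attached to $\tau$; its functional equation exchanges the ``$s$''-integral defining $\calT_s$ for the ``$-s$''-integral defining $\calT_{-s}$ and contributes the $\GL_k$ $\varepsilon$-factor $\epsilon(\tfrac{1}{2}-s,\tau^\vee,\psi)$, the $L(s+\tfrac{1}{2},\tau)\inv$ prefactor built into $I$ and the extra $\gamma(s+\tfrac{1}{2},\tau,\psi)\inv$ relating $f$ to $\hat{f}$ in Lemma \ref{deft}(2) being absorbed along the way. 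It then remains to collect the leftover scalars --- the Weil indices $\gf(\det a,\psi)$ and $\gf(\psi)$ produced in the computation, the powers of $|2|_F$, and the sign $\epsilon(V)^{k}$ --- and to verify that they assemble into exactly $\epsilon(V)^{k}\gf(\psi)^{-k}|2|_F^{2(k-1)s}$. The functorialities of Lemmas \ref{adjacent} and \ref{reflection}, applied to the identification $\sigma_0^\vee\cong\sigma_0$ and to $w_L\cdot(\tau_s\otimes\sigma_0^\vee)\cong\tau_{-s}^\vee\otimes\sigma_0^\vee$, make the appearance of $\tau^\vee$ on the right-hand side precise.

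I expect this last step --- the bookkeeping of the Weil indices and the powers of $|2|_F$ --- to be the main obstacle: it is exactly here that the non-standard Haar measure $dn=|2|_F^{-k/2}d'n$ and the constants $\gf(\psi)^{\dim y(w,\phi)}$, $|2|_F^{2y(w,\bfs)}$ built into the definition of $\calR_P$ in \S\ref{io} are forced on us, and one must see that they fit together so that only $\beta(s)$ survives; this is, in effect, the verification that the metaplectic normalization was chosen correctly. A secondary, purely technical, point is the justification of the interchange of integrals and of the passage between absolute convergence and meromorphic continuation in $s$, which proceeds as in the proof of Lemma \ref{deft} and in \cite[\S7, \S8]{gigp}.
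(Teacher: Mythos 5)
Your proposal is correct and follows essentially the same route as the paper: the paper's proof simply notes $\phi_0^\vee\cong\phi_0$ and $\gf(\psi\circ q_V)=\epsilon(V)\gf(\psi)$ and then invokes the argument of \cite[Corollary 8.5]{gigp}, and what you outline (cancelling $r_{P^w|P}=r_{Q^w|Q}$, reducing to an unnormalized identity with constant $\epsilon(V)^k\gf(\psi)^{-k}|2|_F^{2(k-1)s}\epsilon(\tfrac{1}{2}-s,\tau^\vee,\psi)$, and proving it by unfolding $\calT_s$ in the mixed model with the explicit representatives, measures, and the Tate-type functional equation) is exactly that cited Gan--Ichino computation spelled out, with your reduction of the normalizing constants checking out against the stated $\beta(s)$. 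The only part you leave unfinished, the final bookkeeping of Weil indices and powers of $|2|_F$, is precisely the content of the reference the paper defers to, so there is no divergence in method.
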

\begin{proof}
Noting that $\phi_0^\vee \cong \phi_0$ and $\gf(\psi \circ q_V) = \epsilon(V) \gf(\psi)$, one can prove it by a similar argument to the proof of \cite[Corollary 8.5]{gigp}.
\end{proof}
\section{Proof of Proposition \ref{reduction}}
Now we can define an equivariant map $\calT$ desired in Proposition \ref{reduction}, and give our proof of the proposition.
We will define such a map $\calT$ to be the map $\calT(k, \calT_{00})$ constructed in \S\ref{eqmap} when $P$ is maximal, and by induction in stages when $P$ is not maximal.
We shall use the same notation as in \S \ref{Io}, and assume that $\sigma_0=\Theta_\psi(\pi_0)$.
\subsection{An equivariant map $\calT$}
In this subsection, we shall define an $\Mp(W) \times \SO(V)$-equivariant map
\begin{equation*}
\calT : \omega_{V,W,\psi} \otimes \Ind_Q^{\SO(V)}(\breve{\sigma}_L) \lra \Ind_P^{\Mp(W)}(\pi_M)
\end{equation*}
that will satisfy Proposition \ref{reduction}.
For a fixed $1\leq m' \leq m$, we put $\bfk'=(k_1, \ldots, k_{m'})$, $k'=k_1+\cdots+k_{m'}$, $\bfk''=(k_{m'+1}, \ldots, k_m)$, $k''=k_{m'+1}+\cdots+k_m$, and $n'=n-k'$.
As in \S\ref{maxparab} and \S\ref{mixmodel}, we shall take $X=X_k$, $X^*=X^*_k$, $Y= Y_k$, and $Y^*= Y^*_k$.
Put $W_{02}= \myspan_F(y^*_{k+1},\dots ,y^*_n)$.
Also, let us put
\begin{align*}
 X'   &=X_{k'}=\myspan_F(x_1,\dots,x_{k'}),&    {X'}^*&=X_{k'}^*=\myspan_F(x^*_1,\dots, x^*_{k'}), \\
 Y'&= Y_{k'} = \myspan_F(y_1,\dots ,y_{k'}),&   {Y'}^*&= Y_{k'}^* = \myspan_F(y^*_1,\dots ,y_{k'}^*), \\
 X''&=\myspan_F(x_{k'+1},\ldots,x_k),&  {X''}^*&= \myspan_F(x^*_{k'+1},\ldots,x^*_k),\\
 Y''&=\myspan_F(y_{k'+1},\ldots,y_k),&  {Y''}^*&= \myspan_F(y^*_{k'+1},\ldots,y^*_k),
\end{align*}
and $V' = V_{n'}$, $W'=W_{n'}$, so that
\begin{align*}
 &V=X' \oplus V' \oplus {X'}^*,& &V'= X'' \oplus V_{n_0} \oplus {X''}^*,&\\
 &W=Y' \oplus W' \oplus {Y'}^*,& &W'= Y'' \oplus W_{n_0} \oplus {Y''}^*,&
\end{align*}
and we shall write $Q' = L' \ltimes U'$ and $P' = M' \ltimes N'$ for the maximal parabolic subgroups of $\SO(V')$ and $\Mp(W')$ stabilizing $X''$ and $Y''$, respectively.\\

Let $(\omega, \calS)$, $(\omega_0, \calS_0)$, and $(\omega_{00}, \calS_{00})$ be the models of the Weil representations constructed in \S\ref{mixmodel}.
Additionally, let $\omega''$ be the realization of the Weil representation $\omega_{V',W',\psi}$ of $\Or(V') \times \Mp(W')$ on a mixed model
\begin{equation*}
\calS''
= \calS(V' \otimes {Y''}^*) \otimes \calS({X''}^* \otimes W_{n_0}) \otimes \calS_{00},
\end{equation*}
and let $\omega' = \omega_{V,W,\psi}$ be the realization of the Weil representation of $\Or(V)\times\Mp(W)$ on a mixed model
\begin{equation*}
\calS'
= \calS(V\otimes {Y'}^*) \otimes \calS({X'}^* \otimes W') \otimes \calS''.
\end{equation*}
As in \S\ref{mixmodel}, fix isomorphisms
\begin{equation}\label{isommodel}
(\omega, \calS) \cong (\omega^{\mathrm{or}}, \calS^{\mathrm{or}}) \cong (\omega', \calS')
\end{equation}
of the three realizations of $\omega_{V,W,\psi}$, and identify them.\\

Let $P^{\GL}_\bfk$ be the standard parabolic subgroup of $\GL_k \cong \GL(Y)$ stabilizing flag
\begin{equation*}
Y_{k_1} \subset Y_{k_1+k_2} \subset \dots \subset Y_{k_1+\cdots +k_m},
\end{equation*}
Similarly, we define the standard parabolic subgroups $P^{\GL}_{\bfk'}$ of $\GL_{k'}$ and $P^{\GL}_{\bfk''}$ of $\GL_{k''}$.
Put $\tau=\Ind_{P^{\GL}_\bfk}^{\GL_{k}}(\tau_1 \otimes \cdots \otimes \tau_m)$, $\tau' = \Ind_{P^{\GL}_{\bfk'}}^{\GL_{k'}}(\tau_1 \otimes \cdots \otimes \tau_{m'})$, and $\tau''=\Ind_{P^{\GL}_{\bfk''}}^{\GL_{k''}}(\tau_{m'+1}\otimes\cdots\otimes\tau_m)$.
These representations are irreducible, since $\tau_1, \ldots, \tau_m$ are tempered.
Define canonical isomorphisms
\begin{align*}
\Phi &: \Ind_Q^{\SO(V)}(\tau_1 \otimes \cdots \otimes \tau_m \otimes \sigma_0^\vee)
             \lra
              \Ind_{Q_k}^{\SO(V)}( \tau \otimes \sigma_0^\vee ),\\
\Psi &: \Ind_Q^{\SO(V)}(\tau_1 \otimes \cdots \otimes \tau_m \otimes \sigma_0^\vee)
            \lra
             \Ind_{Q_{k'}}^{\SO(V)} \left( \tau' \otimes \Ind_{Q'}^{\SO(V')}(\tau'' \otimes \sigma_0^\vee)  \right),
\end{align*}
by
\begin{align*}
\Phi \scrF (h)(x)
&= \delta_{Q_k}(l(x))^{-\frac{1}{2}} \scrF(l(x)h),\\
\Psi \scrF (h)(x',h')
&= \delta_{Q_{k'}}(l'(x'))^{-\frac{1}{2}} \scrF(l'(x')h'h),
\end{align*}
where $l$ and $l'$ are the canonical embeddings $\GL_k \inj L_k$ and $\GL_{k'} \inj L_{k'}$, as in \S\ref{maxparab}, respectively.

Similarly, by abuse of notation, we shall take canonical isomorphisms
\begin{align*}
\Phi &: \Ind_P^{\Mp(W)}(\wtil{\tau_1} \otimes \cdots \otimes \wtil{\tau_m} \otimes \pi_0)
          \lra
           \Ind_{P_k}^{\Mp(W)}( \wtil{\tau} \otimes \pi_0 ),\\
\Psi &: \Ind_P^{\Mp(W)}(\wtil{\tau_1} \otimes \cdots \otimes \wtil{\tau_m} \otimes \pi_0)
         \lra
           \Ind_{P_{k'}}^{\Mp(W)} \left( \wtil{\tau'} \otimes \Ind_{P'}^{\Mp(W')}(\wtil{\tau''} \otimes \pi_0) \right),
\end{align*}
and the canonical embeddings $m : \GL_{k} \inj \mpsp{M_k}$ and $m' : \GL_{k'} \inj \mpsp{M_{k'}}$.

Next, following \S \ref{eqmap}, we put $\calT^a = \calT(k,\calT_{00})$ and $\calT^r = \calT(k',\calT(k'',\calT_{00}))$, which are $\Mp(W) \times \SO(V)$-equivariant maps
\begin{equation*}
\omega \otimes \Ind_Q^{\SO(V)}\left(\tau \otimes \sigma_0^\vee\right)
\lra
\Ind_P^{\Mp(W)}\left(\wtil{\tau} \otimes \pi_0\right),
\end{equation*}
and
\begin{equation*}
\omega' \otimes \Ind_{Q_{k'}}^{\SO(V)}\left(\tau' \otimes \Ind_{Q'}^{\SO(V')}(\tau'' \otimes \sigma_0^\vee)\right)
\lra
\Ind_{P_{k'}}^{\Mp(W)}\left(\wtil{\tau'} \otimes \Ind_{P'}^{\Mp(W')}(\wtil{\tau''} \otimes \pi_0)\right),
\end{equation*}
respectively.
Here $\calT_{00}$ is the fixed map \eqref{00}.
\begin{lem}\label{tc}
The diagram
\begin{align*}
\begin{CD}
 \omega \otimes \Ind_{Q_k}^{\SO(V)}\left(\tau \otimes \sigma_0^\vee\right)
 @>\calT^a>> \Ind_{P_k}^{\Mp(W)}\left(\wtil{\tau} \otimes \pi_0\right) \\
 @A1\otimes\Phi AA  @AA\Phi A \\
 \omega_{V,W,\psi} \otimes \Ind_Q^{\SO(V)}(\tau_1 \otimes \cdots \otimes \tau_m \otimes \sigma_0^\vee)
 @. \Ind_P^{\Mp(W)}(\wtil{\tau_1} \otimes \cdots \otimes \wtil{\tau_m} \otimes \pi_0) \\
 @V1 \otimes \Psi VV  @VV\Psi V\\
 \omega' \otimes \Ind_{Q_{k'}}^{\SO(V)}\left(\tau' \otimes \Ind_{Q'}^{\SO(V')}(\tau'' \otimes \sigma_0^\vee)\right)
 @>> \calT^r>
 \Ind_{P_{k'}}^{\Mp(W)}\left(\wtil{\tau'} \otimes \Ind_{P'}^{\Mp(W')}(\wtil{\tau''} \otimes \pi_0)\right)
\end{CD}
\end{align*}
commutes.
\end{lem}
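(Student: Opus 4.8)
The plan is to prove Lemma~\ref{tc} by writing the three equivariant maps in the diagram explicitly, via the integral formulas of \S\ref{eqmap}, and reducing the asserted commutativity to an identity of integrals. Since every representation occurring is realized on an explicit space of functions, it suffices to compare the two composites $\Phi^{-1}\circ\calT^a\circ(1\otimes\Phi)$ and $\Psi^{-1}\circ\calT^r\circ(1\otimes\Psi)$ after evaluating at an element of $\Mp(W)$ and pairing against vectors in the relevant contragredient spaces. The isomorphisms $\Phi$ and $\Psi$ only repackage the $\GL$-variables and insert powers of the modulus functions $\delta_{Q_k}$, $\delta_{Q_{k'}}$ (and, through the nested induction $\Ind_{Q'}^{\SO(V')}$, of $\delta_{Q'}$), so on the level of functions the comparison is controlled by transitivity of normalized parabolic induction for the $\GL$-factors. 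I would carry this out with the $s$-deformed assignments $\calT_s$ of Lemma~\ref{deft} and specialize at $s=0$ only at the very end, so that the $L$- and $\gamma$-factors stay visible throughout.

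The key step is to split the single integral defining $\calT^a$ into an iterated integral matching the two-stage shape of $\calT^r$. Writing the $\GL_k$-induction $\tau=\Ind(\tau'\otimes\tau'')$ and using transitivity of normalized parabolic induction for the $\GL$-factors together with transitivity of the nested $\SO$-parabolics, the integration over $U_k\SO(V_{n_0})\backslash\SO(V)$ unfolds into an outer integration over a quotient of $\SO(V)$ by $U_{k'}\SO(V')$ and an inner one over a quotient of $\SO(V')$ by $U'\SO(V_{n_0})$; the Haar measures fixed in \S\ref{measm} are exactly compatible with this, being built from the product homeomorphisms $U^{(1)}\times\cdots\times U^{(m)}\to U$ and similarly for $N$. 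In parallel one decomposes the Weil representation: the mixed model $\calS'=\calS(V\otimes{Y'}^*)\otimes\calS({X'}^*\otimes W')\otimes\calS''$ in \eqref{isommodel} is set up so that $\calS''$ is the factor carrying the Weil representation $\omega''$ of $\Or(V')\times\Mp(W')$, while the defining vectors $e,e^*$ and the partial Fourier transforms $f,\hat f$ of \S\ref{eqmap} split into their $X'\otimes{Y'}^*$- and $X''\otimes{Y''}^*$-parts, the cross terms $X'\otimes{Y''}^*$ and $X''\otimes{Y'}^*$ being absorbed into the intermediate integrations. Feeding these decompositions into the formula of Lemma~\ref{deft}(2), the inner integration reproduces exactly a value of $\calT(k'',\calT_{00})$, and the outer integration then yields $\calT(k',-)$ applied to that map, which is $\calT^r$ by definition.

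The step I expect to be the main obstacle is the exact matching of all normalizing constants, so that the diagram commutes on the nose with no residual scalar. One must track: the powers of $|2|_F$ in the measures $du$, $dn$ of \S\ref{measm}; the modulus characters, using that $\rho_{Q_k}$ on the $\GL_{k'}$-block is accounted for by $\rho_{Q_{k'}}$ plus the contribution coming from $\Ind_{Q'}^{\SO(V')}$; the determinant and Weil-index factors $\gf(\det a,\psi)^{-1}|\det a|_F^{\rho_Q+\rho_P}$ from \eqref{formulaf}; and the analytic normalizations, for which one invokes $L(s,\tau)=L(s,\tau')\,L(s,\tau'')$ and $\gamma(s,\tau,\psi)=\gamma(s,\tau',\psi)\,\gamma(s,\tau'',\psi)$ since $\tau=\Ind(\tau'\otimes\tau'')$. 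These were arranged precisely so as to cancel, and the measures in \S\ref{measm} were chosen with this bookkeeping in mind, so the verification is of the same nature as (and may follow) the change-of-model computations in \cite[\S8]{gigp}; once it is complete, specializing at $s=0$ gives Lemma~\ref{tc}.
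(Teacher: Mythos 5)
Your plan is essentially the paper's proof: one unfolds $\calT^a$ and $\calT^r$ through the integral formulas of \S\ref{eqmap} (pairing against suitable test vectors to extract the value at the identity), uses multiplicativity of the $L$- and $\gamma$-factors for $\tau=\Ind(\tau'\otimes\tau'')$ together with the invariance properties \eqref{formulaf}, and reduces everything to comparing the iterated and one-step mixed-model evaluations. Your step about the cross terms $X'\otimes{Y''}^*$, $X''\otimes{Y'}^*$ being absorbed into the intermediate integrations is precisely the content of the paper's Lemma \ref{toraja}, proved there by an explicit Schr\"odinger-model computation, so the approaches coincide.
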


Lemma \ref{tc} lets us define an $\Mp(W)\times\SO(V)$-equivariant map
\begin{equation*}
\calT : \omega_{V, W, \psi} \otimes \Ind_Q^{\SO(V)}(\breve{\sigma}_L) \lra \Ind_P^{\Mp(W)}({\pi}_M),
\end{equation*}
so that the diagram will remain commutative if we insert $\calT$ into the middle horizontal space.
In other words,
\begin{equation*}
\calT = \Phi\inv \circ \calT^a \circ (1\otimes\Phi)
= \Psi\inv \circ \calT^r \circ (1 \otimes \Psi).
\end{equation*}
\subsection{Proof of Lemma \ref{tc}}
Let $\varphi \in \calS \cong \calS'$ and $\scrF \in \Ind_Q^{\SO(V)}(\tau_1 \otimes \cdots \otimes \tau_m \otimes \sigma_0^\vee)$.
It suffices to show that
\begin{equation}\label{ppc}
 \an{\calT^r (\varphi \otimes \Psi\scrF)(g)(1,1), \cv_1 \otimes \cdots \otimes \cv_m \otimes \cv_0}
 = \an{\calT^a (\varphi \otimes \Phi\scrF)(g)(1), \cv_1 \otimes \cdots \otimes \cv_m \otimes \cv_0},
\end{equation}
for any $\cv_i \in \scrV_{\tau_i^\vee}$, $\cv_0 \in \scrV_{\pi_0^\vee}$, and $g \in \Mp(W)$. \\

Fix $\cv_i \in \scrV_{\tau_i^\vee}$ $\cv_0 \in \scrV_{\pi_0^\vee}$, and $g \in \Mp(W)$.
Choose an element $\scrK = \calK \otimes \scrK'$ of
\begin{equation*} 
\Ind_{P^{\GL}_{\bfk'}}^{\GL_{k'}}(\tau_1^\vee \otimes \cdots \otimes \tau_{m'}^\vee)
 \otimes \Ind_{P'}^{\Mp(W')}(\wtil{\tau''}^\vee \otimes \pi_0^\vee)
\cong {\tau'}^\vee \otimes \Ind_{P'}^{\Mp(W')}(\wtil{\tau''} \otimes \pi_0)^\vee,
\end{equation*}
such that
\begin{align*}
\supp(\scrK) \subset (P^{\GL}_{\bfk'} \times P') \cdot K',\\
\scrK(x) = \cv_1 \otimes \cdots \otimes \cv_m \otimes \cv_0,
\end{align*}
for any $x \in K'$, where $K' = K'_{\mathit G} \times K'_{\mathit M} \subset \GL_{k'} \times \Mp(W')$ is a compact open subgroup such that
\begin{itemize}
\item
 $\left((\GL_{k_1} \times \cdots \times \GL_{k_{m''}}) \times \Mp(W')\right) \cap K'$ stabilizes $\cv_1, \ldots, \cv_m,\cv_0$;
\item
 $K'$ stabilizes $\omega'(g)\varphi$, i.e., $\omega'(m'(a')g'_0g)\varphi = \omega'(g)\varphi$ for any $(a',g'_0) \in K'$.
\end{itemize}
Since $K'$ stabilizes $\calT^r (\varphi \otimes \Psi\scrF)(g) = \calT^r(\omega'(g)\varphi \otimes \Psi\scrF)(1)$, we have
\begin{align}\label{tj1}
\an{\calT^r (\varphi \otimes \Psi\scrF)(g), \scrK}
&=\int_{(P^{\GL}_{\bfk'} \times P_{k''})\backslash (\GL_{k'}\times\Mp(W'))}\an{\calT^r (\varphi \otimes \Psi\scrF)(g)(x), \scrK(x)} dx\notag\\
&=\int_{(P^{\GL}_{\bfk'} \times P_{k''})\backslash (P^{\GL}_{\bfk'} \times P_{k''}) K'}\an{\calT^r (\varphi \otimes \Psi\scrF)(g)(x), \scrK(x)} dx\notag\\
&=\vol(K')\an{\calT^r (\varphi \otimes \Psi\scrF)(g)(1,1),\cv_1 \otimes \cdots \otimes \cv_m \otimes \cv_0}.
\end{align}

On the other hand, by the definition of $\calT^r$ and Lemma \ref{deft}, we see that $\an{\calT^r (\varphi \otimes \Psi\scrF)(g), \scrK}$ equals
\begin{equation*}
L(\tfrac{1}{2}, \tau')\inv \gamma(\tfrac{1}{2}, \tau', \psi)\inv
    \int_{U_{k'}\SO(V_{n'})\backslash\SO(V)}
     \an{\calT'(f_{\calS'}(\varphi)(gh) \otimes \an{\Psi\scrF(h),\calK}), \scrK'}
    dh,
\end{equation*}
where
\begin{equation*}
\calT' = \calT(k'',\calT_{00})
 : \omega'' \otimes \Ind_{Q'}^{\SO(V')}(\tau'' \otimes \sigma_0^\vee)
  \lra \Ind_{P'}^{\Mp(W')}(\wtil{\tau''} \otimes \pi_0).
\end{equation*}
The last integral is equal to
\begin{align*}
&\int_{U_{k'}\SO(V_{n'})\backslash\SO(V)}
       \An{\calT'\left(f_{\calS'}(\varphi)(gh) \otimes 
        \int_{P^{\GL}_{\bfk'}\backslash\GL_{k'}}
         \an{\Psi\scrF(h)(a',\bullet),\calK(a')} da'\right) , \scrK'} dh\\
&=\int_{U_{k'}\SO(V_{n'})\backslash\SO(V)}
       \An{\calT'\left(f_{\calS'}(\varphi)(gh) \otimes 
        \int_{K'_{\mathit G}}
         \an{\Psi\scrF(l'(a')h)(1,\bullet), \cv_1\otimes\cdots\otimes\cv_{m'}} da'\right) , \scrK'} dh\\
&=\int_{K'_{\mathit G}}
       \int_{U_{k'}\SO(V_{n'})\backslash\SO(V)}
       \An{\calT'\left(f_{\calS'}(\varphi)(gl'(a')h) \otimes
         \an{\Psi\scrF(h)(1,\bullet), \cv_1\otimes\cdots\otimes\cv_{m'}} \right) , \scrK'} dhda'.
\end{align*}
Thus we have that $\an{\calT^r (\varphi \otimes \Psi\scrF)(g), \scrK}$ is equal to the product of $L(\tfrac{1}{2}, \tau')\inv\gamma(\tfrac{1}{2}, \tau', \psi)\inv$ and
\begin{align}\label{stst}
 \int_{K'_{\mathit G}} \int_{U_{k'}\SO(V_{n'})\backslash\SO(V)}
       \An{\calT'\left(f_{\calS'}(\varphi)(gl'(a')h) \otimes
         \an{\Psi\scrF(h)(1,\bullet), \cv_1\otimes\cdots\otimes\cv_{m'}} \right) , \scrK'} dhda'.
\end{align}
Moreover,
\begin{align}\label{tj3}
&\int_{K'_{\mathit G}}
       \An{\calT'\left(f_{\calS'}(\varphi)(gl'(a')h) \otimes
         \an{\Psi\scrF(h)(1,\bullet), \cv_1\otimes\cdots\otimes\cv_{m'}} \right) , \scrK'} da'\notag\\
&=\int_{K'_{\mathit G}}
      \int_{K'_{\mathit M}}
      \An{\calT'\left(
        f_{\calS'}(\varphi)(gl'(a')h) \otimes
         \an{\Psi\scrF(h)(1,\bullet), \cv_1\otimes\cdots\otimes\cv_{m'}}
          \right) (g'_0) , \scrK'(g'_0)} dg'_0 da'\notag\\
&=\int_{K'}
       \An{\calT'\left(
        f_{\calS'}(\varphi)(g'_0gl'(a')h) \otimes
         \an{\Psi\scrF(h)(1,\bullet), \cv_1\otimes\cdots\otimes\cv_{m'}}
          \right) (1) , \cv_{m'+1}\otimes\cdots\otimes\cv_0} d(a',g'_0).
\end{align}
Then \eqref{stst} and \eqref{tj3} imply that $\an{\calT^r (\varphi \otimes \Psi\scrF)(g), \scrK}$ is
\begin{align}\label{ststst}
&L(\tfrac{1}{2}, \tau')\inv\gamma(\tfrac{1}{2}, \tau', \psi)\inv
       \int_{U_{k'}\SO(V_{n'})\backslash\SO(V)}
       \int_{K'} \notag\\
&\times
       \An{\calT'\left(
        f_{\calS'}(\varphi)(g'_0gl'(a')h) \otimes
         \an{\Psi\scrF(h)(1,\bullet), \cv_1\otimes\cdots\otimes\cv_{m'}}
          \right) (1) , \cv_{m'+1}\otimes\cdots\otimes\cv_0} d(a',g'_0)dh.
\end{align}
Now by the formula \eqref{formulaf} and the choice of $K'$, we have
\begin{equation*}
f_{\calS'}(\varphi)(g'_0gl'(a')uh)
=f_{\calS'}(\varphi)(guh).
\end{equation*}
Therefore, \eqref{tj1} and \eqref{ststst} imply that
\begin{align}\label{tj4}
&\an{\calT^r (\varphi \otimes \Psi\scrF)(g)(1,1), \cv_1 \otimes \cdots \otimes \cv_m \otimes \cv_0}\notag\\
&=L(\tfrac{1}{2}, \tau')\inv \gamma(\tfrac{1}{2}, \tau', \psi)\inv
     \int_{U_{k'}\SO(V_{n'})\backslash\SO(V)}\notag\\
 &\quad\times
       \An{\calT'\left(
        f_{\calS'}(\varphi)(gh) \otimes
         \an{\Psi\scrF(h)(1,\bullet), \cv_1\otimes\cdots\otimes\cv_{m'}}
          \right) (1) , \cv_{m'+1}\otimes\cdots\otimes\cv_0} dh.
\end{align}
Now, the definition of $\calT'$ gives that the last integral is equal to
\begin{align}\label{tj5}
&L(\tfrac{1}{2}, \tau'')\inv \gamma(\tfrac{1}{2}, \tau'',\psi)\inv
       \int_{U_{k'}\SO(V_{n'})\backslash\SO(V)}
       \int_{U'\SO(V_{n_0}) \backslash \SO(V')}\notag\\
 &\times
        \An{\calT_{00}
         \left(
          f_{\calS''}
           \left(
            f_{\calS'}(\varphi)(gh)
           \right)
           (h')
           \otimes
          \an{\an{\Psi\scrF(h)(1,h'), \cv_1\otimes\cdots\otimes\cv_{m'}}, \cv_{m'+1}\otimes\cdots\otimes\cv_m}
         \right), \cv_0} dh'dh\notag\\
=&L(\tfrac{1}{2}, \tau'')\inv \gamma(\tfrac{1}{2}, \tau'',\psi)\inv
       \int_{U\SO(V_{n_0}) \backslash\SO(V)}
       \int_{U_{k'}U'\backslash U}\notag\\
 &\times
       \An{\calT_{00}\left(
       f_{\calS''}
         \left(
         f_{\calS'}(\varphi)(guh)
         \right)
         (1)
          \otimes
         \an{\scrF(h), \cv_1\otimes\cdots\otimes\cv_m}
          \right), \cv_0} dudh.
\end{align}

By \eqref{tj4} and \eqref{tj5}, we have
\begin{align}\label{ppcl}
&\an{\calT^r (\varphi \otimes \Psi\scrF)(g)(1,1), \cv_1 \otimes \cdots \otimes \cv_m \otimes \cv_0}\notag\\
&=L(\tfrac{1}{2}, \tau)\inv \gamma(\tfrac{1}{2}, \tau,\psi)\inv
       \int_{U\SO(V_{n_0}) \backslash\SO(V)}
       \An{\calT_{00}\left(
          f''(\varphi)(gh)
           \otimes
          \scrF_0(h)
           \right), \cv_0
       } dh,
\end{align}
where
\begin{align*}
f''(\varphi) (gh) &= \int_{U_{k'}U'\backslash U}
         f_{\calS''}
          \left(
          f_{\calS'}(\varphi)(ugh)
          \right)
          (1)
          du,\\
\scrF_0 (h) &= \an{\scrF(h), \cv_1\otimes\cdots\otimes\cv_m}.\\
\end{align*}

Similarly, we can obtain
\begin{align}\label{ppcr}
&\an{\calT^a (\varphi \otimes \Phi\scrF)(g)(1), \cv_1 \otimes \cdots \otimes \cv_m \otimes \cv_0}\notag\\
&= L(\tfrac{1}{2}, \tau)\inv \gamma(\tfrac{1}{2}, \tau,\psi)\inv
   \int_{U\SO(V_{n_0})\backslash\SO(V)}\An{\calT_{00}(f'(\varphi)(gh) \otimes \scrF_0(h)), \cv_0}dh,
\end{align}
where
\begin{equation*}
f'(\varphi) (gh)
=\int_{U_k\backslash U}f_\calS(\varphi)(ugh) du.
\end{equation*}

Now \eqref{ppcl} and \eqref{ppcr} say that it suffices to show that $f''(\varphi)=f'(\varphi)$, which will follow from Lemma \ref{toraja} below.
\qed

\begin{lem}\label{toraja}
Under the identification \eqref{isommodel}, for any $\varphi \in \calS^{\mathrm{or}}$, we have
\begin{equation*}
\int_{U_{k'}U'\backslash U}  f_{\calS''} \left( f_{\calS'}(\varphi)(u) \right) (1)du
=  \int_{U_k\backslash U} f_\calS(\varphi)(u) du.
\end{equation*}
\end{lem}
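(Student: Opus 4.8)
The plan is to reduce the identity, by Fubini over a tower of unipotent radicals, to a pointwise statement, and then to prove that statement by unwinding both sides through the explicit mixed-model formulas of \S\ref{mixmodel}. Write $Q_{(k',k'')}$ for the parabolic of $\SO(V)$ stabilizing the flag $X'\subset X_k$; then $Q_{(k',k'')} = Q_{k'}\cap Q_k$, one has $U_{k'}U' = U_{(k',k'')}$, and $U_k\trianglelefteq U_{(k',k'')}\trianglelefteq U$ is a tower of subgroups of $U = U_\bfk$ normal in $U$, with $U_{(k',k'')}/U_k$ canonically the unipotent radical of the $(k',k'')$-parabolic of the Levi factor $\GL(X_k)$; inside $\SO(V)$ this is $\{\,l(u_c) : c\in\Hom(X'',X')\,\}$, where $u_c = \left(\begin{smallmatrix}1 & c\\ 0 & 1\end{smallmatrix}\right)$ relative to $X_k = X'\oplus X''$. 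By \eqref{formulaf} the function $f_\calS(\varphi)$ is left $U_k$-invariant and $f_{\calS'}(\varphi)$ is left $U_{k'}$-invariant; a short computation with the explicit formulas, using that $U'\subset\SO(V')$ acts on the model $\calS'$ only through the lower-level Weil representation $\omega''$, shows moreover that $u\mapsto f_{\calS''}(f_{\calS'}(\varphi)(u))(1)$ is left $U_{(k',k'')}$-invariant. Hence, writing $\int_{U_k\backslash U} = \int_{U_{(k',k'')}\backslash U}\int_{U_k\backslash U_{(k',k'')}}$, the lemma is equivalent to the pointwise identity
\[
f_{\calS''}(f_{\calS'}(\varphi)(u))(1) \;=\; \int_{\Hom(X'',X')} f_\calS(\varphi)(l(u_c)u)\,dc \qquad (u\in U),
\]
where $dc$ is the measure on $\Hom(X'',X')$ inherited from the fixed Haar measure on $U$.

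Next I would eliminate the outer variable. Since $f_\calS(\chi)(l(u_c)u) = f_\calS(\omega(u)\chi)(l(u_c))$ and $f_{\calS'}(\chi)(u) = f_{\calS'}(\omega'(u)\chi)(1)$, and $\omega\cong\omega'$ under \eqref{isommodel}, replacing $\varphi$ by $\omega(u)\varphi$ reduces the claim to the case $u=1$, i.e.\ to
\[
f_{\calS''}(f_{\calS'}(\varphi)(1))(1) \;=\; \int_{\Hom(X'',X')} f_\calS(\varphi)(l(u_c))\,dc \qquad(\varphi\in\calS^{\mathrm{or}}). \qquad(\ast)
\]

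Finally I would prove $(\ast)$ by direct computation in the ordinary Schr\"odinger model $\calS^{\mathrm{or}} = \calS(V\otimes(Y^*\oplus W_{02}))$, $Y^* = {Y'}^*\oplus{Y''}^*$, with $e' = \sum_{i\le k'}x_i\otimes y^*_i$, $e'' = \sum_{k'<i\le k}x_i\otimes y^*_i$. For the right-hand side, the formula $[\omega(h)\varphi](x)=\omega_0(h)\varphi(h^{-1}x)$ for $h\in\SO(V)$, together with $l(u_c)^{-1}e = e' + e'' - \langle c\rangle$ where $\langle c\rangle := \sum_{i>k'}c(x_i)\otimes y^*_i\in X'\otimes{Y''}^*$ and the fact that the $\omega_0(l(u_c))$-twist disappears upon evaluation at $0\in X^*\otimes W_{n_0}$ ($\det u_c = 1$), gives $f_\calS(\varphi)(l(u_c)) = [\varphi(e'+e''-\langle c\rangle)](0)$; unwinding the partial Fourier transform relating $\calS^{\mathrm{or}}$ to $\calS$ and the evaluation at $0$, then integrating over $c$, expresses the right-hand side of $(\ast)$ as the iterated integral of $\varphi$ over $(X'\otimes{Y''}^*)\times(X'\otimes W_{02})\times(X''\otimes W_{02})$ with the $V\otimes{Y'}^*$-coordinate frozen to $e'$, the $X''\otimes{Y''}^*$-coordinate frozen to $e''$, all remaining coordinates (those in ${X'}^*$ and ${X''}^*$ directions, in $V_{n_0}\otimes{Y''}^*$, in $X^*\otimes W_{02}$, etc.) frozen to $0$, and the $V_{n_0}\otimes W_{02}$-coordinate left free (this being the $\calS_{00}$-valued output). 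For the left-hand side, the analogous unwinding of the two-stage passage $\calS^{\mathrm{or}}\to\calS'$ and the construction of $\calS''$ from $\calS'$ as in \S\ref{mixmodel}, using the formulas for $\omega'$ and $\omega''$ and the evaluations prescribed by $f_{\calS'}(\cdot)(1)$ and $f_{\calS''}(\cdot)(1)$, produces exactly the same iterated integral, over the same coordinate ranges with the same frozen values; so the two sides agree after reordering (Fubini) and relabelling. The one remaining point is that all measures match — that $dc$ pushes forward along $c\mapsto\langle c\rangle$ to the self-dual measure on $X'\otimes{Y''}^*$, and that the Fourier-transform measures arising in the definitions of $\calS$, $\calS'$ and $\calS''$ factor compatibly over $X'\otimes W_{02}$ and $X''\otimes W_{02}$ — which is precisely what the normalizations (and the powers of $|2|_F$) of \S\ref{measm} are set up to guarantee. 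The main obstacle is purely organizational: tracking, through this chain of nested partial Fourier transforms, which tensor factor each variable occupies and the effect of each evaluation at $0$, and checking that no spurious constant survives. Once that bookkeeping is in place, $(\ast)$, and hence the lemma, follows at once; the computation runs parallel to those in \cite[\S7.4, \S8]{gigp} and \cite[\S6]{ato}.
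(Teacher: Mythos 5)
Your proposal is correct and in substance follows the paper's own route: the paper proves the lemma by exactly the same device, namely unwinding both sides through the explicit mixed-model formulas and the partial (inverse) Fourier transform and identifying each with one and the same iterated integral of $\varphi$ over the coordinates $x_i\otimes y_j^*$ (the set $C$ in the paper's notation), with the frozen values $e'$, $e''$, $0$ and the free $V_{n_0}\otimes W_{02}$-variable, the measure normalizations being those of \S\ref{measm}. Your only real departure is organizational: you first factor out the common integration over $U_{k'}U'\backslash U=U_{(k',k'')}\backslash U$ and reduce, via \eqref{formulaf} and equivariance, to a pointwise identity at $u=1$ --- a slightly stronger statement than needed --- but its verification is the very same model computation and measure bookkeeping that constitutes the paper's proof.
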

\begin{proof}
Put
\begin{align*}
 e' &=x_1\otimes y^*_1+\dots+x_{k'}\otimes y_{k'}^* \in X'\otimes {Y'}^*, \\
 e''&=x_{k'+1}\otimes y^*_{k'+1}+\dots+x_k\otimes y_k^* \in X''\otimes {Y''}^*,
\end{align*}
and let $\varphi \in \calS^{\mathrm{or}}$.
Because
\begin{align*}
\calS^{\mathrm{or}}
 &\cong \calS(V\otimes {Y'}^*) \ 
    \otimes \ \calS((X'\oplus {X'}^*)\otimes ({Y''}^*\oplus W_{02})) \\
 &\quad\otimes \ \calS(V'\otimes {Y''}^*) \ 
    \otimes \ \calS((X''\oplus {X''}^*)\otimes W_{02}) \ 
     \otimes \ \calS_{00},
\end{align*}
we shall write
\begin{align*}
\varphi
 \left[
  x,
  \left(\begin{array}{c} y_1\\ y_2 \end{array} \right), 
  x', 
  \left(\begin{array}{c} y'_1 \\ y'_2 \end{array}\right)
 \right]
=\varphi
 \left[
  x,
  \left(\begin{array}{c} y_1\\ y_2 \end{array}\right)
 \right]
 \left[
  x', 
  \left(\begin{array}{c} y'_1 \\ y'_2 \end{array}\right)
 \right]
\end{align*}
for the evaluation of $\varphi$ at $x \in V \otimes {Y'}^*$, $y_1 \in X' \otimes ({Y''}^*\oplus W_{02})$, $y_2 \in {X'}^*\otimes ({Y''}^*\oplus W_{02})$, $x' \in V'\otimes {Y''}^*$, $y'_1 \in X''\otimes W_{02}$, and $y'_2 \in {X''}^*\otimes W_{02}$, which is an element of $\calS_{00}$.
Then we have
\begin{align*}
f_{\calS''} \left( f_{\calS'}(\varphi)(u) \right) (1)
&=\int_{y' \in X''\otimes W_{02}}
     \left[f_{\calS'}(\varphi)(u)\right] \left[e'', \left(\begin{array}{c}y'\\0\end{array}\right)\right] dy'\\
&=\int_{y' \in X''\otimes W_{02}} \int_{y' \in X' \otimes ({Y''}^*\oplus W_{02})}
      \omega^{\mathrm{or}}(u)\varphi
       \left[
        e', \left(\begin{array}{c}y\\0\end{array}\right), e'', \left(\begin{array}{c}y'\\0\end{array}\right)
       \right] dy'\\
&=\int_{y' \in X''\otimes W_{02}} \int_{y' \in X' \otimes ({Y''}^*\oplus W_{02})}
      \varphi
       \left[
        u\inv e', \left(\begin{array}{c}y\\0\end{array}\right), e'', \left(\begin{array}{c}y'\\0\end{array}\right)
       \right] dy',
\end{align*}
for any $u \in U_{k'}U'\backslash U$.
Thus, if we regard $\varphi$ as an element of
\begin{equation*}
\calS^\mathrm{or}
\cong \calS((V\otimes Y^*) \oplus ((X\oplus X^*)\otimes W_{02})) \otimes \calS_{00},
\end{equation*}
then we have
\begin{equation}\label{lht}
\int_{U_{k'}U'\backslash U}  f_{\calS''} \left( f_{\calS'}(\varphi)(u) \right) (1)du
= \int_{c=(c_{i,j}) \in C} \varphi \left( \sum_{i=1}^k x_i\otimes y_i^* +  \sum_{i,j} c_{i,j} x_i \otimes y_j^* \right) \prod_{i,j}d_\psi c_{i,j},
\end{equation}
where the integration region $C$ is a direct product $C=C_1 \times \cdots \times C_m$ of sets
\begin{align*}
C_l
=\Set{
(c_{i,j})
|
c_{i,j}\in F,
\begin{array}{l}
  i=k_0+\cdots+k_{l-1}+1, \ldots, k_0+\cdots+k_l, \\
  j=k_0+\cdots+k_l+1, \ldots, n,
\end{array}
}
\end{align*}
of $k_l$ $\times$ $(k_{l+1}+\cdots+k_m)$ matrices with certain shifted indices.
Similarly, we have
\begin{equation}\label{rht}
\int_{U_k\backslash U} f_\calS(\varphi)(u) du
= \int_{c=(c_{i,j}) \in C} \varphi \left( \sum_{i=1}^k x_i\otimes y_i^* +  \sum_{i,j} c_{i,j} x_i \otimes y_j^* \right) \prod_{i,j}d_\psi c_{i,j}.
\end{equation}
Now the lemma follows from \eqref{lht} and \eqref{rht}.
\end{proof}
\subsection{Proof of Proposition \ref{reduction}}\label{pf}
Let us finish the proof of Proposition \ref{reduction}.
This follows from the propositions above and induction in stages.
Now assume that $w \in \wl_\phi(M, \Mp(W))$, and let
\begin{equation*}
w=w_1 \cdots w_l
\end{equation*}
be a reduced decomposition of $w$ in $\wl(\hat{M}, \Sp_{2n}(\C))$.
Then it can be seen that
\begin{align*}
&\calR_P(w, \pi_M, \psi)=\calR_P(w_1, \pi_M, \psi) \circ \cdots \circ \calR_P(w_l, \pi_M, \psi),&
&\calR_Q(w, \sigma_L)=\calR_Q(w_1, \sigma_L) \circ \cdots \circ \calR_Q(w_l, \sigma_L).&
\end{align*}
Thus, it suffices to show that the following diagram commutes for any simple reflection $w \in \wl(\hat{M}, \Sp_{2n}(\C))$:
\begin{align*}
\begin{CD}
  \omega_{V, W, \psi} \otimes \Ind_Q^{\SO(V)}(\breve{\sigma}_L) @>\calT>> \Ind_P^{\Mp(W)}(\pi_M) \\
   @V 1\otimes\mathcal{R}_Q(w, \breve{\sigma}_L)VV                      @VV\mathcal{R}_P(w, \pi_M, \psi)V \\
  \omega_{V, W, \psi} \otimes \Ind_Q^{\SO(V)}(w\breve{\sigma}_L) @>\calT>> \Ind_P^{\Mp(W)}(w\pi_M).
\end{CD}
\end{align*}
Recall the realization $\wl(\hat{M}, \Sp_{2n}(\C)) \inj \wlbc{m}$.
The commutativity follows from Lemma \ref{adjacent} and the equation $\calT = \Phi\inv\circ\calT^a\circ(1\otimes\Phi)$ when $w \in \mathfrak{S}_m$,
 and from Lemma \ref{reflection}, Proposition \ref{comm}, and the equation $\calT=\Psi\inv\circ\calT^r\circ(1\otimes\Psi)$ repeatedly when $w\in (\Z/2\Z)^m$.
Then we have completed the proof of Proposition \ref{reduction}.

\end{document}